\newtheorem{theorem}{{\sc Theorem}}[section]
\newtheorem{lemma}[theorem]{{\sc Lemma}}
\newtheorem{prop}[theorem]{{\sc Proposition}}
\theoremstyle{remark}
\newtheorem{remark}[theorem]{{\sc Remark}}
\def\f{\mathfrak }
\def\b{\mathbb }
\def\rr{\b R}
\def\nn{\b N}
\def\cc{\b C}
\def\pp{\b P}
\def\erw{\b E}
\def\phi{\varphi }
\def\calf{{\mathcal F}}
\def\cala{{\mathcal A}}
\def\calb{{\mathcal B}}
\def\cald{{\mathcal D}}
\def\calw{{\mathcal W}}
\def\frk{{\f k}}
\def\on{\operatorname}
\def\tra{^{\prime}}
\begin{document}

\title[Multivariate CLT for traces of powers]{Stein's method and the multivariate CLT for traces of  powers on the classical compact groups}

\author{Christian D\"obler}
\address{Ruhr-Universit\"at Bochum, Fakult\"at f\"ur Mathematik, NA 3/68, D-44780 Bochum, Germany.} 
\email{christian.doebler@rub.de}

\author{Michael Stolz}\thanks{Both authors have been supported by Deutsche Forschungsgemeinschaft via SFB-TR 12.\\
{\it Keywords:} random matrices, compact Lie groups, Haar measure, traces of powers, Stein's method, normal
approximation, exchangeable pairs, heat kernel, power sum symmetric polynomials\\
{\it MSC 2010:} primary: 15B52, 60F05, secondary: 60B15, 58J65}
\address{Ruhr-Universit\"at Bochum, Fakult\"at f\"ur Mathematik, NA 3/69, D-44780 Bochum, Germany.}
\email{michael.stolz@ruhr-uni-bochum.de}

\begin{abstract}
Let $M_n$ be a random element of the unitary, special orthogonal, or unitary symplectic groups, distributed 
according to Haar measure. By a classical result of Diaconis and Shahshahani, for large matrix size $n$,
the vector $ (\on{Tr}(M_n), \on{Tr}(M_n^2), \ldots, \on{Tr}(M_n^d))$ tends to a vector of independent (real or complex) Gaussian random variables. Recently, Jason Fulman has demonstrated that for a single power $j$ (which may grow with $n$), 
a speed of convergence result may be obtained via Stein's method of exchangeable pairs. In this note, we extend 
Fulman's result to the multivariate central limit theorem for the full vector of traces of powers.
\end{abstract}

\maketitle

One aspect of random matrix theory concerns random elements of compact Lie groups. A classical result, due to
Diaconis and Shahshahani \cite{DiaSh94}, is as follows: Let $M_n$ be an element of $\on{U}_n, \on{O}_n$, or $\on{USp}_{2n}$, distributed according to Haar measure. Then, as $n \to \infty$, the vector
$$ (\on{Tr}(M_n), \on{Tr}(M_n^2), \ldots, \on{Tr}(M_n^d))$$ converges weakly to a vector of independent, (real or complex) Gaussian random variables. The original proof deduced this from exact moment formulae, valid for $n$ sufficiently large (see also \cite{Sto05}). Different versions of the moment computations, also taking care of $\on{SO}_n$, have been
proposed in \cite{PaVa04} and \cite{HuRud03}.

Subsequently, the speed of convergence in the univariate version of this result was studied by Charles Stein \cite{Ste95},
who proved that in the orthogonal case the error decreases faster than any power of the dimension, and Kurt Johansson \cite{Joh97}, who obtained
exponential convergence. While Johansson's approach had its roots in Szeg\"o's limit theorem for Toeplitz determinants, Stein used the ``exchangeable pairs'' version of a set of techniques that he had been developing since the early 1970s (see \cite{Ste72}) and that nowadays is referred to as ``Stein's method''. 
Recently, Jason Fulman {\cite{Ful10} has proposed an approach to the speed of convergence for a single power of $M_n$, based on combining Stein's method of exchangeable pairs with heat kernel techniques. While producing weaker results on the speed than Stein's and Johansson's, his theorems apply to the case that the power $M_n^{d(n)}$ grows with $n$. Furthermore, his techniques seem more likely to be useful in contexts beyond the standard representations of the classical groups.

In this note, we extend Fulman's approach and results to a multivariate setting, making use of recent extensions,
due to Chatterjee, Meckes, Reinert, and R\"ollin \cite{ChaMe08, ReiRoe09, Me09}, of Stein's method of exchangeable pairs 
to cover multivariate normal approximations. This yields, to the best of our knowledge, the first rates of convergence 
result in the multivariate CLT for traces of powers on the classical compact groups, or, in any case, the first one that allows 
for the powers to grow with $n$. Our set-up contains Fulman's as a special case, and we recover, in Wasserstein distance, the rates that were proven by him in Kolmogorov distance (see Remark \ref{fulmanvergleich} below).

We will review Meckes' version
of the multivariate exchangeable pairs method (\cite{Me09}) in Section \ref{Meckes}, stating a complex version of her 
results that will be useful for the unitary case.
This will lead to a slight improvement on Fulman's result even in the one dimensional (single power) case, in that now the convergence of the (complex) trace of a power of a random Haar unitary
to a complex normal distribution can be quantified. 

Fulman's approach consists in constructing 
useful exchangeable pairs from Brownian motion on the connected compact Lie groups $K_n = \on{U}_n, \on{SO}_n,$ or
$\on{USp}_{2n}$. This amounts to studying the heat kernel on $K_n$, and in particular the action 
of the Laplacian on power sum symmetric polynomials, which express products of traces of powers of group elements 
in terms of their eigenvalues. This is reviewed in Section \ref{Rains}. Explicit formulae are due to Rains \cite{Rai97}.
They are rooted in Schur-Weyl duality as explained in \cite{Lev08}. 

In Sections \ref{sec-u}, \ref{sec-so}, and \ref{sec-sp}, we then state and prove rates of convergence results in the multivariate CLT for
the unitary, special orthogonal, and unitary symplectic groups.

\section{Exchangeable pairs and multivariate normal approximation}
\label{Meckes}

The approach of univariate normal approximation by exchangeable pair couplings in Stein's method has a long history 
dating back to the monograph \cite{St86} by Charles Stein in 1986. In order to show that a given real valued random variable $W$ 
is approximately normally distributed, Stein proposes the construction of another random variable $W\tra$ on the same probability space as $W$ such that the pair $(W,W\tra)$ is exchangeable, i.e. satisfies the relation 
$(W,W\tra)\stackrel{\cald}{=}(W\tra,W)$, and such that there exists $\lambda\in ]0,1[$ for which the
linear regression property $\erw[W-W\tra|W]=-\lambda W$ holds. In this situation, the distance between $W$ and a standard normal $Z$ can be bounded in various metrics, including Wasserstein's and Komogorov's.

The range of examples to which this technique could be applied was considerably extended in the work \cite{RiRo97} of Rinott and Rotar who proved normal approximation theorems allowing the linear regression property to be satisfied
only approximately. Specifically, they assumed the existence of a ``small'', random quantity $R$ such that 
$\erw[W\tra - W|W]=-\lambda W +R$. 

In \cite{ChaMe08}, Chatterjee and Meckes proposed a version of exchangeable pairs for multivariate normal
aproximation. For a given random vector $W=(W_1,\ldots,W_d)^T$ they assume the existence of another random vector 
$W\tra =(W_1\tra,\ldots,W_d\tra)^T$ such that $W \stackrel{\cald}{=} W\tra$ and of a constant $\lambda\in\rr$ such that 
$\erw[W\tra - W|W]=\lambda W$. In \cite{ReiRoe09} Reinert and R\"{o}llin investigated the more general linear regression property $\erw[W\tra -W|W]=-\Lambda W+R$, where now $\Lambda$ is an invertible non-random $d\times d$ matrix and $R=(R_1,\ldots,R_d)^T$ is a small remainder term. However, in contrast to Chatterjee and Meckes, Reinert and R\"{o}llin need the full strength of the exchangeability of the vector $(W,W\tra)$. Finally, in \cite{Me09}, Elizabeth Meckes reconciled the two approaches, allowing for the more general linear regression property from \cite{ReiRoe09} and using sharper coordinate-free bounds on the solution to the Stein equation suggested by those from \cite{ChaMe08}. 

Both \cite{ChaMe08} and \cite{Me09} contain an ``infinitesimal version of Stein's method of exchangeable pairs'', i.e., 
they provide error bounds for multivariate normal approximations in the case that for each $t>0$ there is an exchangeable pair $(W,W_t)$ and that some further limiting properties hold as $t\to0$. It is such an infinitesimal version that
will be applied in what follows.

The abstract multivariate normal approximation theorem provided by Meckes will be sufficient for the special orthogonal and symplectic cases of the present note, and the unitary case needs only a slight extension via basic linear algebra. So it
is not really necessary to explain how these approximation theorems relate to the fundamentals of Stein's method
(see \cite{ChSh05} for a readable account). But it is crucial to understand that there are fundamental differences
between the univariate and multivariate cases. So a few remarks are in order. The starting point of any version
of Stein's method of normal approximation in the univariate case is the observation that a random variable $W$ is standard
normal if, and only if, for all $f$ from a suitable class of piecewise $\on{C}^1$ functions one has 
$$ \erw[ f\tra(W) - W f(W)] = 0.$$ The quest for an analogous first order characterization of the multivariate
normal distribution has proven unsuccessful, so Chatterjee and Meckes work with the following second-order
substitute (see \cite[Lemma 1]{Me09}): Let $\Sigma$ be a positive semi-definite $d\times d$-matrix. A $d$-dimensional random vector $X$ has the distribution 
$N(0,\Sigma)$ if, and only if, for all $f\in \on{C}_c^2(\rr^d)$ the identity
$$ \erw\left[\left\langle\on{Hess}f(X),\Sigma\right\rangle_{\rm HS}- \left\langle X,\nabla f(X)\right\rangle\right]=0$$
holds (where HS stands for Hilbert-Schmidt, see below). Among the consequences of this is that the multivariate
approximation theorems are phrased in Wasserstein distance (see below) rather than Kolmogorov's distance
$$ \sup_{t \in \rr}\ \big|\mu( ]-\infty, t]) - \nu( ]-\infty, t])\big|$$ for probability measures $\mu, \nu$ on the real line, i.e., the distance concept in which Fulman's univariate theorems are cast.

For a vector $x\in\rr^d$ let $\|x\|_2$ denote its euclidean norm induced by the standard scalar product on $\rr^d$ that will be denoted by $\langle\cdot,\cdot\rangle$. For $A, B \in \rr^{d \times k}$ let 
$\langle A,B\rangle_{\rm HS}:=\on{Tr}(A^TB)=\on{Tr}(B^TA)=\on{Tr}(AB^T)=\sum_{i=1}^d\sum_{j=1}^k a_{ij}b_{ij}$ be the usual Hilbert-Schmidt scalar product on $\rr^{d\times k}$ and denote by $\|\cdot\|_{\rm HS}$ the corresponding norm.
For random matrices $M_n, M \in \rr^{k \times d}$, 
defined on a common probability space $(\Omega, \cala, \pp)$, we will say that $M_n$ converges to $M$ 
in $\on{L}^1(\| \cdot \|_{\rm HS})$
if $\| M_n - M\|_{\rm HS}$ converges to $0$ in $\on{L}^1(\pp).$

For $A \in \rr^{d \times d}$ let $\| A\|_{\rm op}$ denote the operator norm induced by the euclidean norm, i.e.,
$\| A\|_{\rm op} = \sup\{ \|Ax\|_2:\ \|x\|_2 = 1\}.$
More generally, for a $k$-multilinear form $\psi:(\rr^d)^k\rightarrow\rr$ define the operator norm
$$ \|\psi\|_{\rm op}:=\sup\left\{\psi(u_1,\ldots,u_k)\,:\, u_j\in\rr^d,\, \|u_j\|_2=1,\, j=1,\ldots,k\,\right\}.$$

For a function $h:\rr^d\rightarrow\rr$ define its minimum Lipschitz constant $M_1(h)$ by

\[M_1(h):=\sup_{x\not=y}\frac{|h(x)-h(y)|}{\|x-y\|_2}\in[0,\infty].\]

If $h$ is differentiable, then $M_1(h)=\sup_{x\in\rr^d}\|Dh(x)\|_{\rm op}$. 
More generally, for $k\geq1$ and a $(k-1)$-times differentiable $h:\rr^d\rightarrow\rr$ let

\[M_k(h):=\sup_{x\not=y}\frac{\|D^{k-1}h(x)-D^{k-1}h(y)\|_{\rm op}}{\|x-y\|_2}\,,\]

viewing the $(k-1)$-th derivative of $h$ at any point as a $(k-1)$-multilinear form.
Then, if $h$ is actually $k$-times differentiable, we have $M_k(h)=\sup_{x\in\rr^d}\|D^kh(x)\|_{\rm op}$. 
Having in mind this identity, we define  $M_0(h):=\|h\|_\infty$. 

Finally, recall the Wasserstein distance for probability distributions $\mu$, $\nu$ on $(\rr^d,\calb^d)$. It is defined by 

$$d_\calw(\mu,\nu):=\sup\left\{|\int h d\mu-\int h d\nu|\,:\, h:\rr^d\rightarrow\rr\text{ and } M_1(h)\leq1\right\}.$$

Now we are in a position to state the abstract multivariate normal approximation theorems that will be applied in this note, starting with the
real version, taken from \cite[Thm.~4]{Me09}. $Z = (Z_1, \ldots, Z_d)^T$ denotes a standard $d$-dimensional 
normal random vector, $\Sigma \in \rr^{d \times d}$ a positive semi-definite matrix and $Z_{\Sigma} := \Sigma^{1/2} Z$ with
distribution $\on{N}(0, \Sigma)$.   

\begin{prop}\label{Meckes-r}
Let $W, W_t\ (t > 0)$ be $\rr^d$-valued $\on{L}^2(\pp)$ random vectors on the same probability space
$(\Omega, \cala, \pp)$ such that for any $t>0$ the pair $(W, W_t)$ is exchangeable. 
Suppose there exist an invertible non-random matrix $\Lambda$, a positive semi-definite matrix $\Sigma$, a random vector $R=(R_1,\ldots,R_d)^T$, a random $d\times d$-matrix $S$, a sub-$\sigma$-field $\calf$ of $\cala$ such that $W$ 
is measurable w.r.t.\ $\calf$ and a non-vanishing deterministic function $s:\ ]0, \infty[ \rightarrow\rr$ such that the following three conditions are satisfied:
\begin{enumerate}
\item[(i)] $\frac{1}{s(t)}\erw[W_t-W|\calf]\stackrel{t\to0}{\longrightarrow}-\Lambda W+R$ in $\on{L}^1(\pp)$.
\item[(ii)] $\frac{1}{s(t)}\erw[(W_t-W)(W_t-W)^T|\calf]\stackrel{t\to0}{\longrightarrow} 2\Lambda\Sigma + S$ 
in $\on{L}^1(\|\cdot\|_{\rm HS})$.
\item[(iii)] For each $\epsilon>0$,
$$\lim_{t\to0}\frac{1}{s(t)} \erw\left[\|W_t-W\|_2^2\ 1_{\{\|W_t-W\|_2^2>\epsilon\}}\right]=0.$$
\end{enumerate}
Then
\begin{enumerate}
\item[(a)] For each $h\in \on{C}^2(\rr^d)$,
$$|\erw[h(W)]-\erw[h(Z_\Sigma)]|\leq \|\Lambda^{-1}\|_{\rm op}\left(M_1(h) 
\erw[\|R\|_2]+\frac{\sqrt{d}}{4}M_2(h)\ \erw[\|S\|_{\rm HS}]\right).$$

\item[(b)] If $\Sigma$ is actually positive definite, then

$$ d_\calw(W,Z_\Sigma)\leq \|\Lambda^{-1}\|_{\rm op}
\left(\erw[\|R\|_2]+\frac{1}{\sqrt{2\pi}}\|\Sigma^{-1/2}\|_{\rm op}\ \erw[\|S\|_{\rm HS}]\right).$$

\end{enumerate}
\end{prop}

\noindent
\begin{remark} \label{remark4}
In applications it is often easier to verify the following stronger condition  in the place
of  (iii) of Proposition \ref{Meckes-r}:
\begin{itemize}
\item[$(iii)^{\prime}$] \hspace{10em} $\lim_{t\to0}\frac{1}{s(t)}\erw\left[||W_t-W||_2^3\right]=0.$
\end{itemize}
\end{remark}

Now we turn to a version of Proposition \ref{Meckes-r} for complex random vectors, which will be needed for the case of the unitary group. $\| \cdot \|_{\rm op}$ and $\| \cdot \|_{\rm HS}$ extending in the obvious way, we now denote by 
$Z = (Z_1, \ldots, Z_d)$
a $d$-dimensional {\it complex} standard normal random vector, i.e., there
are iid $\on{N}(0, 1/2)$ distributed real random variables $X_1, Y_1, \ldots, X_d, Y_d$ such that $Z_j = X_j + i Y_j$ for all $j = 1, \ldots, d$.

\begin{prop}
\label{Meckes-c}
Let $W, W_t\ (t > 0)$ be $\cc^d$-valued $\on{L}^2(\pp)$ random vectors on the same probability space
$(\Omega, \cala, \pp)$ such that for any $t>0$ the pair $(W, W_t)$ is exchangeable. Suppose that there exist non-random matrices $\Lambda, \Sigma ·\in \cc^{d \times d}$, $\Lambda$ invertible, $\Sigma$ positive semi-definite, a random vector $R \in \cc^d$, random matrices
$S, T \in \cc^{d \times d}$, a sub-$\sigma$-field $\calf$ of $\cala$ such that $W$ is 
measurable w.r.t.\ $\calf$, and a non-vanishing deterministic 
function $s:\ ]0, \infty[ \to \rr$ with the following 
properties:
\begin{itemize}
 \item[(i)] $\frac{1}{s(t)} \erw[ W_t - W| \calf] \stackrel{t \to 0}{\longrightarrow}
- \Lambda W + R$ in $\on{L}^1(\pp).$ 
\item[(ii)] $\frac{1}{s(t)} \erw\left[ (W_t - W) (W_t - W)^* | \calf \right] 
\stackrel{t \to 0}{\longrightarrow} 2 \Lambda \Sigma + S $ in $\on{L}^1(\| \cdot\|_{\rm HS}).$
\item[(iii)] $\frac{1}{s(t)} \erw\left[ (W_t - W) (W_t - W)^T | \calf \right] 
\stackrel{t \to 0}{\longrightarrow} T $ in $\on{L}^1(\| \cdot\|_{\rm HS})$.
\item[(iv)] For each $\epsilon > 0$, 
$$ \lim_{t \to 0} \frac{1}{s(t)} \erw\left[ \| W_t - W\|_2^2\ 1_{\{ \| W_t - W\|_2^2 > \epsilon
\}} \right] = 0.$$
\end{itemize}
Then, 
\begin{itemize}
\item[(a)] If $h \in \on{C}^2(\rr^{2d})$, then
$$ | \erw h(W) - \erw h(\Sigma^{1/2} Z) | \le \| \Lambda^{-1}\|_{\rm op} 
\left( M_1(h) \erw \|R\|_2 + \frac{\sqrt{d}}{4} \erw\left[ \|S\|_{\rm HS}
+ \|T\|_{\rm HS}\right] \right).$$
\item[(b)] If $\Sigma$ is actually positive definite, then
$$ d_{\calw}(W, \Sigma^{1/2} Z) \le \| \Lambda^{-1}\|_{\rm op}\left(
\erw\|R\|_2 + \frac{1}{\sqrt{2 \pi}} \| \Sigma^{-1/2}\|_{\rm op}
\erw\left[ \| S\|_{\rm HS} + \| T\|_{\rm HS}\right] \right).$$
\end{itemize}
\end{prop}
~

\noindent
\begin{remark}\label{rem-meckesc}  
~
\begin{itemize}
\item[(i)] Remark \ref{remark4} above also applies to the present condition (iv).
\item[(ii)] If $Y$ is a centered $d$-dimensional complex normal random vector, then there exist 
$A \in \cc^{d \times d}$ and a complex standard normal $ Z \in \cc^d$ such that $Y = A Z$.
In this case, $ \erw[Y Y^T] = A \erw[ZZ^T] A^T = 0,$ since $\erw[ZZ^T] = 0$. This
heuristically motivates condition (iii) above for $W$ close to such a $Y$, if one thinks of $T$
as a small remainder term. For an algebraic rationale see \eqref{reellif}
below.
\end{itemize}
\end{remark}

\begin{proof}[Proof of Proposition \ref{Meckes-c}]
The proof is by reduction to the real version in Proposition \ref{Meckes-r}. To this end, we need some preparation. To each $z = (z_1,\ldots, z_d)^T \in \cc^d$ assign the vector $$z_{\rr} := (\on{Re}(z_1), \on{Im}(z_1), \ldots, \on{Re}(z_d), \on{Im}(z_d))^T \in 
\rr^{2d}.$$ Given $A = (a_{jk}) \in \cc^{d\times d}$ define $A_{\rr} \in \rr^{2d \times 2d}$ uniquely by requiring that
$(Az)_{\rr} = A_{\rr} z_{\rr}$ for all $z \in \cc$. In concrete terms, $A_{\rr}$ consists of $2 \times 2$ blocks, where
the block at position $(j, k)$ is given as 
$$ \left( \begin{array}{rr} \on{Re}(a_{jk}) & - \on{Im}(a_{jk})\\  \on{Im}(a_{jk}) & \on{Re}(a_{jk})\end{array} \right).$$ Observe that
\begin{equation}
\label{hilfreellif}
(AB)_{\rr} = A_{\rr} B_{\rr},\ (A^*)_{\rr} = (A_{\rr})^T,\ \text{\rm and}\ (AA^*)_{\rr} = A_{\rr} (A^*)_{\rr}
= A_{\rr} (A_{\rr})^T,
\end{equation}
 and that for $d=1$ one has to specify whether a scalar is to be interpreted as a 
vector or as a matrix.
Writing $J$ for the Kronecker product $\left(\begin{array}{rr} 1 & 0 \\ 0 & -1\end{array} \right) \otimes \on{I}_d,$ one easily verifies the following identity of real $2d \times 2d$ matrices:
\begin{equation}\label{reellif}
z_{\rr} w_{\rr}^T = \frac12 (z w^*)_{\rr} + \frac12 (z w^T)_{\rr} J.\end{equation}
Now, given the data from Proposition \ref{Meckes-c}, we will show that the pairs $(W_{\rr}, (W_t)_{\rr})$ satisfy
the assumptions of Proposition \ref{Meckes-r} with $\Lambda_{\rr}$ for $\Lambda$, $\Sigma\tra := \frac12
\Sigma_{\rr}$ for $\Sigma$, $R_{\rr}$ for $R$ and $\frac12(S_{\rr} + T_{\rr}J)$ for $S$. Clearly  
$(W_{\rr}, (W_t)_{\rr})$ is exchangeable and $\Lambda_{\rr}$ is invertible. By \eqref{hilfreellif}, $\Sigma\tra$ is symmetric and non-negative definite.
To verify assumption (i), observe that 
$$ \frac{1}{s(t)} \erw[ (W_t -W)_{\rr} |\calf] = \frac{1}{s(t)} \erw[ (W_t -W) |\calf]_{\rr}$$ 
$$ \stackrel{t \to 0}{\longrightarrow} (-\Lambda W + R)_{\rr} = - \Lambda_{\rr} W_{\rr} + R_{\rr}.$$
For (ii), by \eqref{reellif} above, we have
\begin{eqnarray*}
&&\frac{1}{s(t)} \erw[ (W_t - W)_{\rr} (W_t - W)_{\rr}^T | \calf] \\
&=& \frac{1}{s(t)} \left( \frac12 \erw[ (W_t - W) (W_t - W)^{\ast} | \calf]_{\rr} + 
\frac12 \erw[ (W_t - W) (W_t - W)^{T} | \calf]_{\rr} J\right)\\
&\stackrel{t \to 0}{\longrightarrow}& \frac12 (2\Lambda \Sigma + S)_{\rr} + \frac12 T_{\rr} J = \Lambda_{\rr}
\Sigma_{\rr} + \frac12 (S_{\rr} + T_{\rr} J)\\
&=& 2 \Lambda_{\rr} \Sigma\tra + \frac12 (S_{\rr} + T_{\rr} J).
\end{eqnarray*}
Since $\| (W_t - W)_{\rr}\|_2 = \| W_t - W\|_2$, condition (iii) is trivially satisfied. So we may apply Proposition
\ref{Meckes-r} to the real version $W_{\rr}$ of $W$. Note that if $\tilde{Z}$ is distributed according to 
$\on{N}(0, \on{I}_{2d})$, then $(\Sigma\tra)^{1/2} \tilde{Z}$ is equal in distribution to $(\Sigma^{1/2} Z)_{\rr}$. Observing 
that $\| \Lambda^{-1}\|_{\on{op}} = \| \Lambda^{-1}_{\rr}\|_{\on{op}}, \|  S_{\rr}\|_{\on{HS}} = \sqrt{2} \|S\|_{\on{HS}},
\|T_{\rr} J\|_{\on{HS}} = \| T_{\rr}\|_{\on{HS}} = \sqrt{2} \|T\|_{\on{HS}},\ \erw\|R\|_2 = \erw\| R_{\rr}\|_2$ and
\newline $\|(\Sigma\tra)^{-1/2}\|_{\on{op}} = \sqrt{2} \| \Sigma_{\rr}^{-1}\|_{\on{op}} = \sqrt{2} \|\Sigma^{-1}\|_{\on{op}}$ yields the result.
\end{proof}

Now we construct a family of exchangeable pairs to fit into Meckes' set-up. We do this along the lines of Fulman's univariate
approach in \cite{Ful10}. 
In a nutshell, let $(M_t)_{t \ge 0}$ be Brownian motion on $K_n$ with
Haar measure as initial distribution. Set $M := M_0$. 
Brownian motion being reversible w.r.t.\ Haar measure, $(M, M_t)$ is an exchangeable pair for any $t > 0$. Suppose that the centered version $W$ of the statistic we are interested in is given by
$W = (f_1(M), \ldots, f_d(M))^T$ for suitable measurable functions $f_1, \ldots, f_d$. Defining 
$W_t = (f_1(M_t), \ldots, f_d(M_t))^T$ clearly yields an exchangeable pair $(W, W_t)$.  

To be more specific, let $\frk_n$ be the Lie algebra of $K_n$, endowed with the scalar 
product $\langle X, Y\rangle = \on{Tr}(X^*Y).$ Denote by $\Delta = \Delta_{K_n}$ the 
Laplace--Beltrami operator of $K_n$, i.e., the diffential operator corresponding to
the Casimir element of the enveloping algebra of $\frk_n$. Then $(M_t)_{t \ge 0}$ will
be the diffusion on $K_n$ with infinitesimal generator $\Delta$ and Haar measure as initial distribution. Reversibility then follows from general theory (see \cite[Sec.~II.2.4]{He84}, \cite[Sec.~V.4]{IkWa89}, for the relevant facts). Let $(T_t)_{t\ge 0}$, often symbolically 
written as $\left(e^{t \Delta}\right)_{t\ge 0}$, be the corresponding semigroup of 
transition operators on $\on{C}^2(K_n)$. From the Markov property of Brownian
motion we obtain that
\begin{equation}
 \label{bederwhg}
\erw[f(M_t)|M] = (T_tf)(M)\quad {\rm a.s.}
\end{equation}
Note that $(t, g) \mapsto (T_tf)(g)$ satisfies the heat equation, so a Taylor expansion in $t$
yields the following lemma, which is one of the cornerstones in Fulman's approach 
in that it shows that 
in first order in $t$ a crucial quantity for the regression conditions (i) of Propositions 
\ref{Meckes-r} and \ref{Meckes-c}
is given by the action of the Laplacian.  
\begin{lemma}
\label{entwicklung}
Let $f: K_n \to \cc$ be smooth. Then 
$$ \erw[f(M_t) | M] = f(M) + t (\Delta f)(M) + \on{O}(t^2).$$
\end{lemma}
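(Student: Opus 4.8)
The plan is to derive the expansion from the fact that $u(t,g) := (T_t f)(g)$ solves the heat equation $\partial_t u = \Delta u$ on $K_n$, together with the identity \eqref{bederwhg}, namely $\erw[f(M_t)\mid M] = (T_t f)(M)$ almost surely. First I would record that, since $f$ is smooth and $K_n$ is a compact manifold, the function $t \mapsto T_t f$ is smooth in $t$ with values in $\on{C}^2(K_n)$ (indeed in $\on{C}^\infty$), $T_0 f = f$, and $\frac{d}{dt}\big|_{t=0} T_t f = \Delta f$ by definition of the generator; moreover $\frac{d^2}{dt^2} T_t f = \Delta^2 T_t f$, which for $f$ smooth is continuous on the compact group $K_n$ and hence bounded uniformly in $t$ on any interval $[0,1]$.

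Given this, I would apply Taylor's theorem with integral remainder to the real-valued functions $t \mapsto \on{Re}\,(T_t f)(g)$ and $t \mapsto \on{Im}\,(T_t f)(g)$ at $t = 0$:
$$ (T_t f)(g) = f(g) + t\,(\Delta f)(g) + \int_0^t (t - s)\,(\Delta^2 T_s f)(g)\, ds. $$
Then I would bound the remainder: since $\sup_{s \in [0,1]} \| \Delta^2 T_s f \|_\infty =: C_f < \infty$ (finite because $\Delta^2 T_s f$ depends continuously on $s$ and $K_n$ is compact — this is the one analytic input one needs, and it is where smoothness of $f$ is genuinely used), we get $\big| (T_t f)(g) - f(g) - t (\Delta f)(g) \big| \le \tfrac12 C_f\, t^2$ uniformly in $g \in K_n$, which is exactly the claimed $\on{O}(t^2)$ (in fact uniformly). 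Combining with \eqref{bederwhg} gives $\erw[f(M_t)\mid M] = f(M) + t (\Delta f)(M) + \on{O}(t^2)$ almost surely.

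The only real obstacle is justifying that $T_t f$ is twice continuously differentiable in $t$ with the second derivative bounded on $[0,1]$ — i.e., the analytic regularity of the heat semigroup. On a compact Lie group this is standard: $\Delta$ is an elliptic (self-adjoint, non-positive) operator with discrete spectrum, $T_t = e^{t\Delta}$ is a contraction semigroup that is analytic and smoothing, and $f \in \on{C}^\infty(K_n)$ implies $\Delta^k f \in \on{C}^\infty(K_n)$ for all $k$, so all $t$-derivatives $\frac{d^k}{dt^k} T_t f = T_t \Delta^k f$ exist and are continuous in $(t,g)$, hence uniformly bounded on the compact set $[0,1] \times K_n$. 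Everything else is the elementary Taylor estimate above. I would remark that the implied constant in $\on{O}(t^2)$ depends on $f$ (through $\|\Delta^2 f\|_\infty$) but not on $g$ or on the realization of $M$, which is what makes the $\on{L}^1$-convergence statements in conditions (i) of Propositions \ref{Meckes-r} and \ref{Meckes-c} go through when this lemma is later applied to the coordinate functions $f_j$.
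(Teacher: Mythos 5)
Your proof is correct and follows exactly the route the paper indicates (the paper only sketches it in the sentence preceding the lemma): combine \eqref{bederwhg} with the fact that $(t,g)\mapsto (T_tf)(g)$ solves the heat equation and Taylor-expand in $t$, the remainder being controlled uniformly via $\|\Delta^2 f\|_\infty$ and the contraction property of $T_t$ on the compact group. Your added observation that the $\on{O}(t^2)$ constant is uniform in $g$ is exactly what is needed for the later $\on{L}^1$-convergence arguments, so nothing is missing.
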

\begin{remark}
 An elementary construction of the semigroup $(T_t)$ via an eigenfunction expansion in
irreducible characters of $K_n$ can be found in \cite{St70}. This construction immediately
implies Lemma \ref{entwicklung} for power sums, see Sec.\ \ref{Rains}, since they are
 characters of (reducible) tensor power representations.
\end{remark}
\begin{remark}
\label{convtype}
As $K_n$ is compact, Lemma \ref{entwicklung} implies that $\lim_{t\to 0} \erw[f(M_t) - f(M)| M] = 0$ a.s.\
and in $L^2(\pp)$. Arguments of this type will occur frequently in what follows, usually without further notice.
\end{remark}

\section{Power sums and Laplacians}
\label{Rains}

For indeterminates $X_1, \ldots, X_n$ and a finite family
$\lambda = (\lambda_1, \ldots, \lambda_r)$ of positive integers, the
power sum symmetric polynomial with index $\lambda$ is given by $$p_{\lambda} := \prod_{k=1}^r \sum_{j=1}^n X_j^{\lambda_k}.$$
For $A \in \cc^{n \times n}$ with eigenvalues $c_1, \ldots, c_n$ (not necessarily distinct), we write $p_{\lambda}(A)$ in the place of $p_{\lambda}(c_1, \ldots, c_n)$. Then one has the identity
$$ p_{\lambda}(A) = \prod_{k=1}^r \on{Tr}(A^{\lambda_k}).$$ 
Using this formula, we will extend the definition of $p_{\lambda}$ to integer indices by $p_0(A) = \on{Tr}(I)$, $p_{-k}(A)
= \on{Tr}((A^{-1})^k)$ and $p_{\lambda}(A) = \prod_{j=1}^r p_j(A).$

In particular, the $p_{\lambda}$ may be viewed as functions
on $K_n$, and the action of the Laplacian $\Delta_{K_n}$ on them, useful in view of Lemma \ref{entwicklung}, is available from \cite{Rai97, Lev08}. We specialize their formulae to the cases that will be needed in what follows:
\begin{lemma}
\label{rains-u}
For the Laplacian $\Delta_{\on{U}_n}$ on $\on{U}_n$, one has
\begin{eqnarray*}
(i) \quad \Delta_{\on{U}_n} p_j &=& -n j p_j - j \sum_{l = 1}^{j-1} p_{l, j-l}.\\
(ii)\quad \Delta_{\on{U}_n} p_{j, k} &=& - n (j+k) p_{j, k} - 2 jk p_{j+k} - j p_k \sum_{l=1}^{j-1} p_{l, j-l}\nonumber\\ 
&-& k p_j \sum_{l=1}^{k-1} p_{l, k-l}.\\
(iii) \quad \Delta_{\on{U}_n}(p_j \overline{p_k}) &=& 2jk p_{j-k} - n(j+k) p_j \overline{p_k} - j \overline{p_k}
\sum_{l=1}^{j-1} p_{l, j-l}\nonumber\\ &-& k p_j  \sum_{l=1}^{k-1} \overline{p_{l, k-l}}. 
\end{eqnarray*}
\end{lemma}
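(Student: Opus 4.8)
The plan is to derive all three identities from a single master formula for the action of the Laplace--Beltrami operator $\Delta_{\on{U}_n}$ on products of power sums, which is precisely the content of Rains' computation in \cite{Rai97} (see also the Schur--Weyl duality derivation in \cite{Lev08}). The key algebraic fact is that $p_j$, viewed as the character $\on{Tr}(M^j)$ on $\on{U}_n$, is (up to reindexing) the character of the $j$-th power of the standard representation composed with multiplication, so that the Casimir element acts on tensor powers of the standard representation in a way governed by the combinatorics of cycle type. Concretely, for $\on{U}_n$ the Laplacian acts on $p_\lambda$ (with all parts positive) by the rule: each part $\lambda_k$ contributes a diagonal term $-n\lambda_k$, each \emph{ordered} pair of parts $(\lambda_a,\lambda_b)$ with $a\neq b$ contributes a ``merging'' term $-\lambda_a\lambda_b\, p_{\lambda_a+\lambda_b}\prod_{c\neq a,b}p_{\lambda_c}$ (times $2$ when one accounts for both orderings, giving the factor $-2jk$ in (ii)), and each part $\lambda_k$ contributes a ``splitting'' term $-\lambda_k\sum_{l=1}^{\lambda_k-1}p_l\,p_{\lambda_k-l}\prod_{c\neq k}p_{\lambda_c}$. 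For mixed holomorphic/antiholomorphic products as in (iii), one uses in addition the extension $p_{-k}(M)=\on{Tr}((M^{-1})^k)=\overline{p_k(M)}$ valid on the unitary group, and the merging of a part $j$ with a part $-k$ produces $p_{j-k}$ with coefficient $+2jk$ (the sign flip relative to the holomorphic case reflecting the pairing of a representation with its dual).

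First I would state the master formula in the form appearing in \cite{Rai97} or \cite{Lev08}, namely
\[
\Delta_{\on{U}_n}\, p_\lambda \;=\; -n\Big(\sum_k \lambda_k\Big) p_\lambda
\;-\;\sum_{a\neq b}\lambda_a\lambda_b\, p_{\lambda_a+\lambda_b}\!\!\prod_{c\neq a,b}\!\!p_{\lambda_c}
\;-\;\sum_k \lambda_k \sum_{l=1}^{\lambda_k-1} p_l\, p_{\lambda_k-l}\!\!\prod_{c\neq k}\!\!p_{\lambda_c},
\]
for $\lambda$ a tuple of positive integers, and record the extension to signed indices via $p_{-k}=\overline{p_k}$ together with the convention that merging $\lambda_a=j$ with $\lambda_b=-k$ yields $p_{j-k}$. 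Then each of (i), (ii), (iii) is an immediate specialization: for (i) take $\lambda=(j)$, so there is no merging term and the splitting sum is $-j\sum_{l=1}^{j-1}p_lp_{j-l}=-j\sum_{l=1}^{j-1}p_{l,j-l}$; for (ii) take $\lambda=(j,k)$, where the ordered pairs $(a,b)$ are $(1,2)$ and $(2,1)$, each contributing $-jk\,p_{j+k}$, hence the total $-2jk\,p_{j+k}$, and the splitting terms attach the surviving factor $p_k$ (resp.\ $p_j$); for (iii) take $\lambda=(j,-k)$, where the merging of $j$ with $-k$ over both orderings gives $+2jk\,p_{j-k}$, and the two splitting terms are a holomorphic one on the part $j$ (surviving factor $\overline{p_k}$) and an antiholomorphic one on the part $-k$ (surviving factor $p_j$, and the inner sum becomes $\sum_{l=1}^{k-1}\overline{p_{l,k-l}}$).

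I expect the main obstacle to be \textbf{not} the specialization step, which is purely bookkeeping, but rather the clean statement and justification of the signed-index master formula, in particular getting the sign and the coefficient of the merging term in case (iii) exactly right. The cleanest route is to invoke the Schur--Weyl-duality description from \cite{Lev08}: the Casimir of $\on{U}_n$ acts on the $k$-th mixed tensor power $V^{\otimes r}\otimes (V^*)^{\otimes s}$ of the standard representation, and under the identification of $\prod p_{\lambda_a}\prod\overline{p_{\mu_b}}$ with the trace of the permutation/contraction action, the Casimir eigenvalue decomposes into a scalar part (producing the $-n\sum\lambda_k$ term, where one must track that $p_{-k}$ carries ``weight'' $k$ not $-k$ in this count) and a two-body part implementing transpositions within the tensor factors (producing merges among like-signed parts) and contractions between a $V$ and a $V^*$ factor (producing the $+2jk\,p_{j-k}$ term, the positive sign coming from the dual pairing). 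Once this bracket structure is in hand, all three formulae drop out, and I would simply cite \cite{Rai97, Lev08} for the master formula and present the three specializations as the proof.
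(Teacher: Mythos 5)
Your proposal is correct and follows the same route as the paper, which offers no independent proof of this lemma but simply notes that the formulae are specializations of the master formula of Rains \cite{Rai97} (see also \cite{Lev08}); your merging/splitting bookkeeping for $\lambda=(j)$, $(j,k)$, and the signed case $(j,-k)$ with $p_{-k}=\overline{p_k}$ reproduces exactly the three stated identities. The only point to watch is that the normalization of $\Delta_{\on{U}_n}$ (hence the coefficient $-n\sum_k\lambda_k$) must match the inner product $\langle X,Y\rangle=\on{Tr}(X^*Y)$ fixed in Section \ref{Meckes}, which your citation of \cite{Rai97,Lev08} handles on the same footing as the paper.
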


\begin{lemma}
\label{Rains-so}
For the Laplacian $\Delta_{\on{SO}_n}$ on $\on{SO}_n$,
\begin{eqnarray*}
(i)\hfill \quad \Delta_{\on{SO}_n} p_{j} &=& - \frac{(n-1)}{2} j p_j - \frac{j}{2} \sum_{l=1}^{j-1} p_{l, j-l} +
\frac{j}{2} \sum_{l=1}^{j-1} p_{2l - j}.\\
(ii)\hfill \quad \Delta_{\on{SO}_n} p_{j,k} &=& - \frac{(n-1)(j+k)}{2}\ p_{j,k}  - \frac{j}{2}\ p_k \sum_{l=1}^{j-1} p_{l, j-l} 
 \hfill - \frac{k}{2}\ p_j\ \sum_{l=1}^{k-1} p_{l, k-l}\ -\ jk\ p_{j+k}
\\ &&+\ \frac{j}{2}\ p_k\ \sum_{l=1}^{j-1} p_{j-2l}\ +\ \frac{k}{2}\ p_j\ \sum_{l=1}^{k-1} p_{k - 2l}\ +\ jk\ p_{j-k}.
\end{eqnarray*}
\end{lemma}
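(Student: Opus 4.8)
The plan is to compute $\Delta_{\on{SO}_n} p_j$ and $\Delta_{\on{SO}_n} p_{j,k}$ directly from the general formulae of Rains \cite{Rai97} (equivalently the Schur--Weyl approach of L\'evy \cite{Lev08}), exactly as Lemma \ref{rains-u} does in the unitary case, and then specialize to powers of the standard representation. The Laplacian is the differential operator attached to the Casimir of $\frk_n = \mathfrak{so}_n$, and when applied to a character $\chi_V$ of a representation $V$ it acts as multiplication by the Casimir eigenvalue; for the reducible representations $(\cc^n)^{\otimes m}$ the power sums $p_\lambda$ arise as such characters, and the Laplacian mixes them via the known branching/contraction rules. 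Concretely, I would start from the expression for $\Delta_{\on{SO}_n}$ in terms of an orthonormal basis $\{X_\alpha\}$ of $\mathfrak{so}_n$ with respect to $\langle X, Y\rangle = \on{Tr}(X^*Y)$, namely $\Delta f(g) = \sum_\alpha \tfrac{d^2}{ds^2}\big|_{s=0} f(g e^{sX_\alpha})$, and evaluate on $f = p_j$, $f = p_{j,k}$.

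The key computational steps: first, express $\frac{d}{ds}\big|_{s=0} \on{Tr}((ge^{sX})^j) = \sum_{a+b=j-1} \on{Tr}(g^a X g^b)$ by the Leibniz rule, and then the second derivative gives a sum of terms $\on{Tr}(g^a X g^b X g^c)$ with $a+b+c = j-2$ plus terms $\on{Tr}(g^a X g^b X g^c)$ (from differentiating the exponential twice). Summing over the orthonormal basis of $\mathfrak{so}_n$ and using the identity $\sum_\alpha (X_\alpha)_{pq} (X_\alpha)_{rs} = -\tfrac12(\delta_{ps}\delta_{qr} - \delta_{pr}\delta_{qs})$ (the projection onto antisymmetric matrices), one converts the traces into contractions of powers of $g$ and $g^{-1} = g^T$. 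This is where the two characteristic features of the orthogonal case appear: the $\delta_{ps}\delta_{qr}$ piece produces the ``$p_{l,j-l}$'' terms just as in $\on{U}_n$ (but with a factor $\tfrac12$, accounting for the $\tfrac{n-1}{2}$ and $\tfrac12$ coefficients), while the $\delta_{pr}\delta_{qs}$ piece folds a power of $g$ against its transpose, turning $\on{Tr}(g^a (g^b)^T) = \on{Tr}(g^{a-b})$ into the ``$p_{2l-j}$''-type sums that do not occur for $\on{U}_n$. For $p_{j,k} = p_j p_k$ one also gets a cross term from $X_\alpha$ hitting the $p_j$ factor once and the $p_k$ factor once, which after contraction yields the $jk\, p_{j+k}$ and $jk\, p_{j-k}$ terms.

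I would organize this as: (1) reduce to second derivatives of $s \mapsto \on{Tr}((ge^{sX})^m)$; (2) record the two ``pair-contraction'' identities for an orthonormal basis of $\mathfrak{so}_n$; (3) assemble $\Delta_{\on{SO}_n} p_j$, checking coefficients and index ranges carefully (the sums $\sum_{l=1}^{j-1} p_{2l-j}$ and $\sum_{l=1}^{j-1} p_{j-2l}$ are the same up to reindexing $l \mapsto j-l$, which is a convenient internal consistency check); (4) assemble $\Delta_{\on{SO}_n} p_{j,k}$ by combining the ``self'' terms for $p_j$ and $p_k$ (via the Leibniz rule $\Delta(p_j p_k) = (\Delta p_j)p_k + p_j(\Delta p_k) + 2\Gamma(p_j,p_k)$, where $\Gamma$ is the carr\'e du champ) with the cross term $2\Gamma(p_j, p_k)$, which is the only genuinely new piece. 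Alternatively, and perhaps more cleanly, one simply cites the relevant specialization in \cite{Rai97} or \cite{Lev08}; the statement is explicitly their formula restricted to the standard representation, so the ``proof'' may amount to quoting the source and performing the bookkeeping that turns their general notation into the $p_\lambda$ notation fixed above.

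The main obstacle is purely bookkeeping: getting every combinatorial factor ($\tfrac12$ versus $jk$ versus $\tfrac{j}{2}$), every index range (whether sums run to $j-1$ or $j$, and the treatment of the boundary/degenerate terms like $p_0 = \on{Tr}(I)$ and negative indices $p_{-k} = \on{Tr}(g^{-k})$), and every sign correct, since the orthogonal Casimir normalization and the antisymmetric-projection identity each contribute factors of $\tfrac12$ and a sign that must not be double-counted. The underlying representation-theoretic mechanism — Laplacian acts on tensor-power characters by Casimir eigenvalues, contractions implement the $\mathfrak{so}_n$-invariant bilinear form — is entirely standard and poses no conceptual difficulty.
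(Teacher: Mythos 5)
Your derivation is correct, but it is a genuinely different route from the paper's: the paper gives \emph{no} proof of this lemma at all --- it states that the action of the Laplacian on the $p_\lambda$ "is available from [Rai97, Lev08]" and simply records the specialization to the standard representation, which is exactly the second option you mention in passing. What you propose instead is a self-contained computation, and its ingredients check out: with the metric $\langle X,Y\rangle=\on{Tr}(X^*Y)$ an orthonormal basis of $\mathfrak{so}_n$ is $\{\tfrac{1}{\sqrt2}(E_{pq}-E_{qp})\}_{p<q}$, giving the contraction identity $\sum_\alpha (X_\alpha)_{pq}(X_\alpha)_{rs}=\tfrac12(\delta_{pr}\delta_{qs}-\delta_{ps}\delta_{qr})$ exactly as you state, hence $\sum_\alpha X_\alpha^2=-\tfrac{n-1}{2}I$ (the leading term), $\sum_\alpha \on{Tr}(X_\alpha g^l X_\alpha g^{j-l})=\tfrac12(p_{2l-j}-p_{l,j-l})$ (the two sums in (i), using $g^T=g^{-1}$), and for the cross term $2\sum_\alpha (X_\alpha p_j)(X_\alpha p_k)=2jk\sum_\alpha\on{Tr}(X_\alpha g^j)\on{Tr}(X_\alpha g^k)=jk\,p_{j-k}-jk\,p_{j+k}$, which together with the Leibniz rule reproduces (ii) from (i). The trade-off is clear: the paper's citation keeps the exposition short but makes the reader trust Rains's normalization of the Laplacian (a real source of off-by-a-factor errors, as the $\tfrac{n-1}{2}$ versus $n$ versus $\tfrac{2n+1}{2}$ coefficients across Lemmas \ref{rains-u}--\ref{Rains-sp} show), whereas your computation verifies that normalization against the definition $\Delta f(g)=\sum_\alpha\tfrac{d^2}{ds^2}\big|_{s=0}f(ge^{sX_\alpha})$ used implicitly in Lemma \ref{entwicklung}, at the cost of the index bookkeeping you correctly identify as the only real hazard. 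To turn your sketch into a complete proof you would still need to write out step (3)--(4) explicitly, in particular the count that the pair of positions at distance $l$ in $\on{Tr}((ge^{sX})^j)$ occurs $j-l$ times and symmetrizes to the overall factor $\tfrac{j}{2}$.
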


\begin{lemma}
\label{Rains-sp}
For the Laplacian $\Delta_{\on{USp}_{2n}}$ on $\on{USp}_{2n}$,
\begin{eqnarray*}
(i)\hfill \quad \Delta_{\on{USp}_{2n}} p_{j} &=& - \frac{(2n+1)}{2}\ j p_j - \frac{j}{2} \sum_{l=1}^{j-1} p_{l, j-l} -
\frac{j}{2} \sum_{l=1}^{j-1} p_{2l - j}.\\
(ii)\hfill \quad  \Delta_{\on{USp}_{2n}} p_{j,k} &=& - \frac {(2n + 1)}{2}\ (j+k)\ p_{j,k} 
-\ jk \ p_{j+k} 
 - \frac{j}{2}\ p_k \sum_{l=1}^{j-1} p_{l, j-l} 
  - \frac{k}{2}\ p_j\ \sum_{l=1}^{k-1} p_{l, k-l}\ 
\\ &&-\ \frac{j}{2}\ p_k\ \sum_{l=1}^{j-1} p_{j-2l}\ - \ \frac{k}{2}\ p_j\ \sum_{l=1}^{k-1} p_{k - 2l}\ +\ jk\ p_{j-k}.
\end{eqnarray*}

\end{lemma}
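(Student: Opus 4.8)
The plan is to derive the symplectic formulae in Lemma \ref{Rains-sp} in exactly the same way as the orthogonal ones in Lemma \ref{Rains-so}, exploiting the well-known duality between $\on{SO}_n$ and $\on{USp}_{2n}$ at the level of the Laplace--Beltrami operator acting on symmetric functions of the eigenvalues. Concretely, one invokes the explicit description of the action of $\Delta_{K_n}$ on power sums due to Rains \cite{Rai97} (see also L\'evy \cite{Lev08}), which expresses $\Delta_{K_n} p_\lambda$ as a sum of three contributions: a ``diagonal'' term proportional to $-c(n)\,(\sum_k \lambda_k)\,p_\lambda$ coming from the Casimir scalar, a ``splitting'' term $-\lambda_k \sum_{l=1}^{\lambda_k - 1} p_{l,\lambda_k - l}\cdots$ coming from contractions that break a cycle into two, and a ``folding'' term involving $p_{2l-\lambda_k}$ (equivalently $p_{\lambda_k - 2l}$) coming from the orthogonal/symplectic pairing of tensor factors. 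The only differences between the $\on{SO}_n$, $\on{O}_n$ and $\on{USp}_{2n}$ cases are (a) the value of the normalising constant $c(n)$, which is $(n-1)/2$ for $\on{SO}_n$ and $(2n+1)/2$ for $\on{USp}_{2n}$ (with $2n$ playing the role of the matrix size and a sign flip in the second-order Casimir), and (b) the overall sign of the folding term, which is $+$ for the orthogonal groups and $-$ for the symplectic group. This sign is precisely the ``$\epsilon = \pm 1$'' that distinguishes symmetric from antisymmetric invariant bilinear forms in Schur--Weyl--Brauer duality.

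The steps I would carry out are as follows. First, recall from \cite{Rai97, Lev08} the master formula for $\Delta_{\on{USp}_{2n}}$ acting on a single power sum $p_j$, viewed as the character of the $j$-th tensor power of the standard representation restricted along the cycle structure; reading off the three terms gives part (i) immediately, with matrix size $2n$, constant $(2n+1)/2$, and the folding term entering with a minus sign, hence $-\frac{j}{2}\sum_{l=1}^{j-1} p_{2l-j}$. Second, for part (ii) one applies $\Delta_{\on{USp}_{2n}}$ to the product $p_j p_k = p_{j,k}$; since $\Delta$ is a second-order operator, $\Delta(p_j p_k) = (\Delta p_j) p_k + p_j (\Delta p_k) + 2\,\Gamma(p_j, p_k)$, where $\Gamma$ is the carr\'e du champ (the symmetric bilinear first-order part). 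The first two terms produce, using part (i), the diagonal term $-\frac{(2n+1)}{2}(j+k)p_{j,k}$, the two splitting sums $-\frac{j}{2}p_k\sum p_{l,j-l} - \frac{k}{2}p_j\sum p_{l,k-l}$, and the two folding sums $-\frac{j}{2}p_k\sum_{l=1}^{j-1} p_{j-2l} - \frac{k}{2}p_j\sum_{l=1}^{k-1} p_{k-2l}$. The cross term $2\Gamma(p_j,p_k)$ is where the two cycles of lengths $j$ and $k$ interact: a contraction can either join them preserving orientation, giving $+\on{(sign)}\cdot jk\,p_{j+k}$, or join them with a flip induced by the symplectic form, giving the other sign times $jk\,p_{j-k}$. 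For the symplectic group the orientation-preserving join carries the minus sign, yielding $-jk\,p_{j+k} + jk\,p_{j-k}$, which matches the stated formula. Third, I would cross-check the result by comparing with Lemma \ref{Rains-so}: replacing $n \mapsto -2n$ in the $\on{SO}_n$ formula and flipping the sign of $\Delta$ should carry Lemma \ref{Rains-so} to Lemma \ref{Rains-sp} term by term (this is the standard $\on{SO}/\on{Sp}$ duality $n \leftrightarrow -2n$), and also verify the $j=k$ and $k=1$ special cases against direct computation or against Fulman \cite{Ful10}.

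The main obstacle is purely bookkeeping rather than conceptual: getting the combinatorial coefficients and, above all, the \emph{signs} of the folding term and of the $p_{j+k}$ versus $p_{j-k}$ cross terms exactly right. In Rains' and L\'evy's formulations these signs are encoded implicitly in the Brauer-algebra contractions and in the convention for the Casimir normalisation, so one must be careful to fix a single convention (we use $\langle X, Y\rangle = \on{Tr}(X^* Y)$ on $\frk_n$, as stated in Section \ref{Meckes}) and track it consistently; a sign error here would propagate into the variance computations in Section \ref{sec-sp}. I would resolve this by pinning down the normalisation on the simplest case $\Delta_{\on{USp}_{2n}} p_1 = -\frac{2n+1}{2}p_1$ (no splitting or folding terms are possible for $j=1$), which fixes $c(n)$ unambiguously, and then checking one genuinely nontrivial case such as $\Delta_{\on{USp}_{2n}} p_2$ by hand to confirm the sign of the folding term, before asserting the general formula by appeal to \cite{Rai97, Lev08}.
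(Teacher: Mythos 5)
Your core move --- read off the Laplacian's action on power sums for $\on{USp}_{2n}$ from the explicit formulae of Rains and L\'evy and specialise --- is precisely what the paper does: it states Lemmas \ref{rains-u}, \ref{Rains-so}, and \ref{Rains-sp} as specialisations of the formulae in \cite{Rai97, Lev08} and offers no further argument, so your basic route agrees with the paper's. Your plan to pin down the constant by evaluating on $p_1$ (where no splitting or folding terms can appear) and to check one nontrivial case like $p_2$ by hand or against \cite{Ful10} is also sound and would effectively guard against normalisation errors.

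However, the cross-check you propose via the duality ``substitute $n\mapsto -2n$ in Lemma \ref{Rains-so} and flip the sign of $\Delta$'' does \emph{not} carry the orthogonal formula to the symplectic one and would, if carried out, flag a phantom sign error. Applying this substitution to Lemma \ref{Rains-so}(i) gives $-\frac{2n+1}{2}\,jp_j + \frac{j}{2}\sum_{l=1}^{j-1}p_{l,j-l} - \frac{j}{2}\sum_{l=1}^{j-1}p_{2l-j}$: the splitting sum has the wrong sign relative to Lemma \ref{Rains-sp}(i). The genuine $\on{SO}/\on{Sp}$ duality at this level also involves the involution $p_m\mapsto -p_m$ on single power sums, and even then the overall sign of the transported identity depends on the number of parts of the index, so it is not the clean term-by-term substitution you describe. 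Two smaller remarks: the carr\'e du champ decomposition $\Delta(p_jp_k) = (\Delta p_j)p_k + p_j(\Delta p_k) + 2\Gamma(p_j,p_k)$ is formally correct, but you still have to extract $\Gamma(p_j,p_k) = -\frac12 jk\,p_{j+k} + \frac12 jk\,p_{j-k}$ from Rains' formulae, which is exactly where the substance of part~(ii) lies, so the decomposition reorganises rather than reduces the work; and this cross term turns out to be the \emph{same} for $\on{SO}_n$ and $\on{USp}_{2n}$, so the sign pattern you attribute to a special feature of the symplectic form is in fact common to the orthogonal and symplectic cases.
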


In what follows, we will need to integrate certain $p_\lambda$ over the group $K_n$. Thus we will need special cases of the 
Diaconis-Shahshahani moment formulae that we now recall (see \cite{DiaSh94, HuRud03, PaVa04, Sto05} for proofs). Let $a = (a_1, \ldots, a_r)$, $b = (b_1, \ldots, b_q)$ be families of nonnegative integers and define

$$ f_a(j) := \left\{ \begin{array}{ll} 1 & {\rm if}\ a_j = 0,\\ 
0 & {\rm if}\ ja_j\ {\rm is~odd}, a_j \ge 1,\\
j^{a_j/2} (a_j - 1)!! & {\rm if}\ j\ {\rm is~odd~and}\ a_j\ {\rm is~even}, a_j \ge 2,\\
1 + \sum_{d=1}^{\lfloor a_j/2 \rfloor} j^d { a_j \choose 2d} (2d - 1)!!&
{\rm if}\ j\ {\rm is~even}, a_j \ge 1.\end{array}\right.$$
Here we have used the notation $(2m- 1)!! = (2m-1)(2m-3) \cdot \ldots \cdot 3 \cdot 1.$ Further, we will write
$$ k_a := \sum_{j=1}^r j a_j,\ k_b := \sum_{j=1}^q j b_j,\quad {\rm and}\quad \eta_j := \left\{ \begin{array}{ll} 1, & {\rm if}\ j\ {\rm is~even},\\
0,& {\rm if}\ j\ {\rm is~odd}.\end{array}\right.$$

\begin{lemma}
\label{DS-u} Let $M = M_n$ be a Haar-distributed element of $\on{U}_n$, $Z_1, \ldots, Z_r$ iid complex standard
normals. Then, if $k_a \neq k_b$, 
$$ \alpha_{(a, b)} := \erw \left( \prod_{j=1}^r (\on{Tr}(M^j))^{a_j} \prod_{j=1}^q \overline{(\on{Tr}(M^j))}^{b_j} \right)
=0.$$ If $k_a = k_b$ and $n \ge k_a$, then
$$\alpha_{(a, b)} = \delta_{a, b} \prod_{j=1}^r j^{a_j} a_j! = \erw\left( \prod_{j=1}^r (\sqrt{j} Z_j)^{a_j}\  
\prod_{j=1}^q \overline{(\sqrt{j} Z_j)}^{b_j} \right).$$

\end{lemma}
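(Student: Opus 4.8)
The statement splits into the degenerate case $k_a \neq k_b$ and the main case $k_a = k_b$. For the first, the plan is to exploit the invariance of Haar measure on $\on{U}_n$ under $M \mapsto zM$ for any scalar $z$ on the unit circle (multiplication by $z\on{I}_n \in \on{U}_n$). Since $\on{Tr}((zM)^j) = z^j\,\on{Tr}(M^j)$, one gets $\alpha_{(a,b)} = z^{k_a - k_b}\alpha_{(a,b)}$ for all such $z$, forcing $\alpha_{(a,b)} = 0$ whenever $k_a \neq k_b$; note this requires no hypothesis on $n$.

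For the main case, set $k := k_a = k_b$ and let $\mu$ (resp.\ $\nu$) be the partition of $k$ having exactly $a_j$ (resp.\ $b_j$) parts equal to $j$. Using $\on{Tr}(M^j) = p_j(M)$ and, for unitary $M$, $\overline{\on{Tr}(M^j)} = \on{Tr}(M^{-j})$, so that $\overline{p_\nu(M)}$ is the power sum polynomial evaluated at the eigenvalues of $M^{-1}$, one has $\alpha_{(a,b)} = \int_{\on{U}_n} p_\mu(M)\,\overline{p_\nu(M)}\,dM$. Now I would expand the power sums in the basis of Schur polynomials via the Frobenius formula, $p_\mu = \sum_{\lambda \vdash k}\chi^\lambda_\mu\, s_\lambda$ and likewise for $p_\nu$, where $\chi^\lambda_\mu$ is the value of the irreducible $S_k$-character indexed by $\lambda$ on the class of cycle type $\mu$. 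The key point is that $n \geq k$ forces every $\lambda \vdash k$ to have at most $n$ parts, so each $s_\lambda$, restricted to $\on{U}_n$, is an irreducible character of $\on{U}_n$; Schur--Weyl orthogonality of characters then gives $\int_{\on{U}_n} s_\lambda\,\overline{s_\rho}\,dM = \delta_{\lambda\rho}$. Hence $\alpha_{(a,b)} = \sum_{\lambda \vdash k}\chi^\lambda_\mu\,\chi^\lambda_\nu$ (the $S_k$-characters being real), which by the column orthogonality relations for the symmetric group equals $\delta_{\mu\nu}\,z_\mu$, with $z_\mu = \prod_j j^{a_j} a_j!$ the order of the centralizer of a permutation of cycle type $\mu$. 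Since $\mu = \nu$ if and only if $a = b$, this is exactly $\delta_{a,b}\prod_j j^{a_j} a_j!$.

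It remains to match the Gaussian side. By independence of $Z_1, \ldots, Z_r$ one has $\erw\big(\prod_j (\sqrt j\,Z_j)^{a_j}\,\prod_j \overline{(\sqrt j\,Z_j)}^{\,b_j}\big) = \prod_j j^{(a_j+b_j)/2}\,\erw\big(Z_j^{a_j}\,\overline{Z_j}^{\,b_j}\big)$, so it suffices to show $\erw(Z^a\,\overline{Z}^{\,b}) = \delta_{a,b}\,a!$ for a single complex standard normal $Z$. Writing $Z$ in polar form $Z = \sqrt S\,e^{i\Theta}$ with $\Theta$ uniform on $[0, 2\pi)$ and $S = |Z|^2$ exponentially distributed with mean $1$, independent of $\Theta$, one gets $\erw(Z^a\,\overline{Z}^{\,b}) = \erw(S^{(a+b)/2})\,\erw(e^{i(a-b)\Theta})$, which vanishes unless $a = b$ and then equals $\erw(S^a) = \int_0^\infty s^a e^{-s}\,ds = a!$. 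Substituting back yields $\delta_{a,b}\prod_j j^{a_j} a_j!$, matching the random-matrix computation.

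The genuinely substantive input is the representation-theoretic core of the second paragraph --- the Frobenius expansion of power sums, the identification of Schur polynomials with $\on{U}_n$-characters, and the symmetric-group orthogonality --- together with the observation that the hypothesis $n \geq k_a$ is precisely what prevents truncation (a Schur polynomial in fewer than $\ell(\lambda)$ variables vanishes identically), which is why the exact formula can fail for small $n$. Everything else is bookkeeping.
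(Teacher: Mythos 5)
The paper does not prove this lemma but defers to \cite{DiaSh94, HuRud03, PaVa04, Sto05}; your argument is precisely the classical Diaconis--Shahshahani proof from the first of those references, combining scalar invariance of Haar measure for the degenerate case with the Frobenius expansion $p_\mu = \sum_\lambda \chi^\lambda_\mu s_\lambda$, Schur orthogonality on $\on{U}_n$ (valid without truncation once $n \ge k_a$), and column orthogonality for $S_k$. The proof is correct, including the polar-coordinates computation $\erw(Z^a\overline{Z}^{\,b}) = \delta_{a,b}\,a!$ on the Gaussian side.
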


\begin{lemma}
\label{DS-so}
If $M = M_n$ is a Haar-distributed element of $\on{SO}_n$, $n- 1 \ge k_a$, $Z_1, \ldots, Z_r$ iid real standard
normals, then
\begin{equation}
\label{dsform-so}
\erw \left( \prod_{j=1}^r (\on{Tr}(M^j))^{a_j}\right) = \erw \left( \prod_{j=1}^r (\sqrt{j} Z_j + \eta_j)^{a_j}\right)  
= 
\prod_{j=1}^r f_a(j).
\end{equation}
\end{lemma}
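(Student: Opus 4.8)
The plan is to prove the three-way identity by establishing the outer two equalities separately and noting that the middle one is the classical Diaconis--Shahshahani moment formula for $\on{SO}_n$, which we may simply cite from \cite{DiaSh94, HuRud03, PaVa04, Sto05}. Thus the only genuine work is the rightmost equality: the claim that for independent standard real normals $Z_1, \ldots, Z_r$ one has $\erw\bigl(\prod_{j=1}^r (\sqrt{j}\,Z_j + \eta_j)^{a_j}\bigr) = \prod_{j=1}^r f_a(j)$. Since the $Z_j$ are independent, the expectation of the product factorizes as $\prod_{j=1}^r \erw\bigl[(\sqrt{j}\,Z_j + \eta_j)^{a_j}\bigr]$, so it suffices to show, for each fixed $j$, that $\erw\bigl[(\sqrt{j}\,Z_j + \eta_j)^{a_j}\bigr] = f_a(j)$.

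First I would dispose of the trivial case $a_j = 0$, where both sides equal $1$. For $a_j \ge 1$ I would split according to the parity of $j$. If $j$ is odd, then $\eta_j = 0$, so $\erw[(\sqrt{j}\,Z_j)^{a_j}] = j^{a_j/2}\,\erw[Z_j^{a_j}]$; using the standard Gaussian moment formula $\erw[Z^{m}] = 0$ for $m$ odd and $\erw[Z^{m}] = (m-1)!!$ for $m$ even, this gives $0$ when $a_j$ is odd (equivalently, since $j$ is odd, when $j a_j$ is odd) and $j^{a_j/2}(a_j - 1)!!$ when $a_j$ is even — exactly the second and third cases of the definition of $f_a(j)$. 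If $j$ is even, then $\eta_j = 1$, and I would expand $(\sqrt{j}\,Z_j + 1)^{a_j}$ by the binomial theorem: $\erw[(\sqrt{j}\,Z_j+1)^{a_j}] = \sum_{m=0}^{a_j} \binom{a_j}{m} j^{m/2}\,\erw[Z_j^{m}]$. Only even $m = 2d$ survive, with $\erw[Z_j^{2d}] = (2d-1)!!$, yielding $\sum_{d=0}^{\lfloor a_j/2\rfloor} \binom{a_j}{2d} j^{d}(2d-1)!! = 1 + \sum_{d=1}^{\lfloor a_j/2\rfloor} \binom{a_j}{2d} j^{d}(2d-1)!!$ (the $d=0$ term contributing the $1$), which is the fourth case of $f_a(j)$. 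Assembling the factors over $j$ gives the rightmost equality.

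For the leftmost equality I would invoke the middle (cited) equality together with the rightmost one just proved; alternatively, one can read it directly off the structure of the Diaconis--Shahshahani result, which asserts precisely that $\on{Tr}(M^j)$ behaves, in the relevant moment range $n - 1 \ge k_a$, like $\sqrt{j}\,Z_j + \eta_j$ with the $Z_j$ independent. I do not expect a real obstacle here: the computation is an elementary Gaussian-moment bookkeeping exercise, and the only point requiring a little care is matching the case distinctions in $f_a(j)$ — in particular checking that ``$j a_j$ odd'' in the second case is equivalent to ``$j$ odd and $a_j$ odd'', so that it is genuinely a sub-case of the $j$-odd analysis and does not overlap the others. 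The main conceptual content of the lemma lies entirely in the cited moment formula; the part left to us is purely computational.
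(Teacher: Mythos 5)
Your proposal is correct and matches the paper's treatment: the paper states this lemma as a recalled result, deferring entirely to \cite{DiaSh94, HuRud03, PaVa04, Sto05} for the substantive moment identity, exactly as you do for the equality between the trace moments and the shifted-Gaussian moments. Your additional elementary verification that $\erw\bigl[(\sqrt{j}\,Z_j+\eta_j)^{a_j}\bigr]=f_a(j)$ (factorizing by independence, then splitting on the parity of $j$ and expanding binomially in the even case, with the $d=0$ term supplying the leading $1$) is accurate and correctly accounts for the case distinctions in the definition of $f_a(j)$.
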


\begin{lemma}
\label{DS-sp}
If $M = M_{2n}$ is a Haar-distributed element of $\on{USp}_{2n}$, $2n \ge k_a$, $Z_1, \ldots, Z_r$ iid real standard
normals, then
\begin{equation}
\label{dsform-sp}
\erw \left( \prod_{j=1}^r (\on{Tr}(M^j))^{a_j}\right) = \erw \left( \prod_{j=1}^r (\sqrt{j} Z_j - \eta_j)^{a_j}\right)  
= 
\prod_{j=1}^r (-1)^{(j-1)a_j}f_a(j).
\end{equation}
\end{lemma}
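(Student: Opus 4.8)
The plan is to establish the first equality — the evaluation of $\int_{\on{USp}_{2n}}\prod_j(\on{Tr}(M^j))^{a_j}\,dM$ — and to dispose of the second equality by an elementary Gaussian moment computation. For the latter, write $\lambda=1^{a_1}2^{a_2}\cdots$; since $\prod_j(\sqrt{j}Z_j-\eta_j)^{a_j}$ factors over $j$ and the $Z_j$ are independent, expanding each binomial with $\erw[Z_j^{2d}]=(2d-1)!!$ and $\erw[Z_j^{2d+1}]=0$ gives $\erw[(\sqrt{j}Z_j)^{a_j}]=f_a(j)$ for $j$ odd (where $\eta_j=0$, covering the second and third cases in the definition of $f_a$) and $\erw[(\sqrt{j}Z_j-1)^{a_j}]=(-1)^{a_j}f_a(j)$ for $j$ even (the fourth case). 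As $(-1)^{(j-1)a_j}$ equals $1$ for $j$ odd and $(-1)^{a_j}$ for $j$ even, taking the product over $j$ yields $\prod_j(-1)^{(j-1)a_j}f_a(j)$.

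For the integral I would argue by character theory, in the spirit of \cite{DiaSh94,HuRud03,PaVa04,Sto05}. Regard $p_\lambda$ as a symmetric function of the $2n$ eigenvalues of $M\in\on{USp}_{2n}$; restricted to $\on{USp}_{2n}$ it is an integral linear combination of irreducible characters, so by Schur orthogonality $\int_{\on{USp}_{2n}}p_\lambda\,dM$ equals the coefficient of the trivial character in that combination. Via the Frobenius characteristic map one has $p_\lambda=\sum_{\mu\vdash k_a}\chi^\mu_\lambda\,s_\mu$ (an identity of symmetric functions in $2n$ variables), where $\chi^\mu_\lambda$ is the value at cycle type $\lambda$ of the irreducible $S_{k_a}$-character $\chi^\mu$ and $s_\mu$ is the Schur function, i.e.\ the irreducible $\on{GL}_{2n}$-character. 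Littlewood's branching rule for $\on{GL}_{2n}\downarrow\on{USp}_{2n}$ then says that, in the stable range, the trivial $\on{USp}_{2n}$-module occurs in $s_\mu|_{\on{USp}_{2n}}$ with multiplicity $1$ if every column of $\mu$ has even length (equivalently, $\mu'$ has only even parts), and with multiplicity $0$ otherwise. Combining these facts,
\[
\int_{\on{USp}_{2n}}p_\lambda\,dM=\sum_{\mu:\ \mu'\ \mathrm{even}}\chi^\mu_\lambda .
\]

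To evaluate this sum I would invoke the standard involution $\omega$ on the ring of symmetric functions: it is a ring homomorphism with $\omega(p_j)=(-1)^{j-1}p_j$ and $\omega(s_\mu)=s_{\mu'}$, hence $\omega(p_\lambda)=\on{sgn}(\lambda)\,p_\lambda$ with $\on{sgn}(\lambda)=\prod_j(-1)^{(j-1)a_j}$ the sign of a permutation of cycle type $\lambda$, and therefore $\chi^{\mu'}_\lambda=\on{sgn}(\lambda)\,\chi^\mu_\lambda$. Reindexing the displayed sum by $\nu=\mu'$ gives $\sum_{\mu:\ \mu'\ \mathrm{even}}\chi^\mu_\lambda=\on{sgn}(\lambda)\sum_{\nu\ \mathrm{even}}\chi^\nu_\lambda$, and the remaining sum over partitions $\nu$ with all parts even is precisely the combinatorial quantity that, through the orthogonal branching rule, underlies the $\on{SO}_n$ formula of Lemma \ref{DS-so}; it equals $\prod_j f_a(j)$. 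This gives $\int_{\on{USp}_{2n}}p_\lambda\,dM=\prod_j(-1)^{(j-1)a_j}f_a(j)$, which is the first equality.

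The main obstacle is the bookkeeping of the stable range. Littlewood's restriction rule in the clean, modification-rule-free form used above is valid only for $\ell(\mu)\le n$, whereas the sum runs over all $\mu\vdash k_a$, i.e.\ over partitions with up to $k_a$ rows; taken at face value this argument would only yield the formula for $2n\ge 2k_a$. To reach the sharp hypothesis $2n\ge k_a$ one must either check that King's modification-rule corrections for $\ell(\mu)>n$ cancel within the relevant sum, or instead bypass representation theory and evaluate $\int_{\on{USp}_{2n}}p_\lambda\,dM$ directly from the Weyl integration formula for $\on{USp}_{2n}$ together with a Heine-type determinant identity, where the same care is needed in controlling the boundary terms. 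It is exactly at this point that one leans on the detailed computations of the cited references; the remainder of the argument is as sketched above.
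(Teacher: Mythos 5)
The paper does not prove Lemma~\ref{DS-sp} itself: it recalls the Diaconis--Shahshahani moment formulae with the citation ``see \cite{DiaSh94, HuRud03, PaVa04, Sto05} for proofs,'' so there is no in-paper argument to compare against. Your sketch reproduces the standard character-theoretic route used in those references. The Gaussian-moment verification of the second equality is correct: for $j$ odd, $\eta_j=0$ and $\erw[(\sqrt j\,Z_j)^{a_j}]$ reproduces the first three clauses defining $f_a(j)$; for $j$ even, expanding $(\sqrt j\,Z_j-1)^{a_j}$ by the binomial theorem and factoring out $(-1)^{a_j}$ gives $(-1)^{a_j}f_a(j)$, and the product over $j$ is $\prod_j(-1)^{(j-1)a_j}f_a(j)$ as claimed. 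For the first equality, the chain ``Frobenius expansion $p_\lambda=\sum_\mu\chi^\mu_\lambda s_\mu$, Littlewood restriction to $\on{USp}_{2n}$ with trivial multiplicity $[\mu'\text{ even}]$, then the $\omega$-involution $\chi^{\mu'}_\lambda=\on{sgn}(\lambda)\chi^\mu_\lambda$ to reduce to the orthogonal moment $\sum_{\nu\ \text{even}}\chi^\nu_\lambda=\prod_j f_a(j)$'' is exactly the mechanism in \cite{DiaSh94,Sto05}.

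Your closing caveat identifies the one genuine technical point, and you assess it correctly: a partition $\mu\vdash k_a$ with $\mu'$ even can have $\ell(\mu)$ as large as $k_a$ (e.g.\ $\mu=(1^{k_a})$ with $k_a$ even), so the unmodified Littlewood rule only covers $n\ge k_a$, i.e.\ $2n\ge 2k_a$, rather than the stated $2n\ge k_a$. Reaching the sharp range does require either verifying cancellation of the King modification corrections in the sum over $\mu$, or a direct Weyl-integration/Pfaffian computation. Since the paper also simply defers to the references for this, your honest deferral at that point is appropriate; but if you wanted a self-contained argument you would need to carry out one of those two steps rather than leave it as a remark.
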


\section{The unitary group}
\label{sec-u}

Let $M = M_n$ be distributed according to Haar measure on $K_n = \on{U}_n$.  For $d \in \nn,\ r = 1, \ldots, d,$ consider
the $r$-dimensional complex random vector 
$$W := W(d, r, n) := \left(\on{Tr}(M^{d-r+1}), \on{Tr}(M^{d-r+2}), \ldots, \on{Tr}(M^d)\right)^T.$$ Let $Z := (Z_{d-r+1}, \ldots, Z_d)^T$ be an $r$-dimensional standard complex normal random vector, i.e., there are iid 
real random variables $X_{d-r+1}, \ldots, X_d,$ $Y_{d-r+1}, \ldots, Y_d$ with distribution $\on{N}(0, 1/2)$ such that $Z_j = X_j + i Y_j$ for $j = d-r+1, \ldots, d$. Furthermore, we take $\Sigma$ to denote the diagonal matrix $\on{diag}(d-r+1, d-r+2, \ldots, d)$ and write 
$Z_{\Sigma} := \Sigma^{1/2} Z.$ The present section is devoted to the proof of the following

\begin{theorem}
\label{thm-u} If $n \ge 2d$, the Wasserstein distance between $W$ and $Z_{\Sigma}$ is
\begin{equation} 
\label{ordnungsformel}
d_\calw(W,Z_\Sigma)=\on{O}\left(\frac{\max\left\{\frac{r^{7/2}}{(d-r+1)^{3/2}},\,(d-r)^{3/2}\sqrt{r}\right\}}{n}\right)\,.\end{equation}
In particular, for $r = d$ we have
$$  d_{\calw}(W, Z_{\Sigma}) = \on{O}\left(\frac{d^{7/2}}{n}\right),$$
and for $r \equiv 1$
$$  d_{\calw}(W, Z_{\Sigma}) = \on{O}\left(\frac{d^{3/2}}{n}\right).$$
If $1 \le r = \lfloor cd\rfloor$ for $0 < c < 1$, then 
$$  d_{\calw}(W, Z_{\Sigma}) = \on{O}\left(\frac{d^{2}}{n}\right).$$
\end{theorem}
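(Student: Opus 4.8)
The plan is to apply the complex infinitesimal exchangeable-pairs theorem (Proposition~\ref{Meckes-c}) to the pair $(W, W_t)$, where $W_t = (\on{Tr}(M_t^{d-r+1}), \ldots, \on{Tr}(M_t^d))^T$ and $(M_t)_{t\ge0}$ is Brownian motion on $\on{U}_n$ started from Haar measure, with $\calf = \sigma(M)$ and $s(t) = t$. First I would compute the regression term in condition (i): by Lemma~\ref{entwicklung}, $\frac1t\erw[W_t - W|M] \to (\Delta_{\on{U}_n}\on{Tr}(M^{d-r+1}), \ldots, \Delta_{\on{U}_n}\on{Tr}(M^d))^T$, and by Lemma~\ref{rains-u}(i) this equals $-\Lambda W + R$ with $\Lambda = n\,\on{diag}(d-r+1,\ldots,d)$ and $R_j = -j\sum_{l=1}^{j-1} p_{l,j-l}(M)$. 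Note $2\Lambda\Sigma = 2n\,\on{diag}((d-r+1)^2,\ldots,d^2)$ after taking $\Sigma = \on{diag}(d-r+1,\ldots,d)$ (so $2\Lambda\Sigma$ has the right entries to match the leading term below once I fix the normalization; I should double-check the factor, using that the target covariance of $(\sqrt{j}Z_j)$ has $j$-th diagonal entry $j$). For conditions (ii) and (iii) I would expand $\erw[(W_t-W)(W_t-W)^*|M]$ and $\erw[(W_t-W)(W_t-W)^T|M]$. Using the ``heat-equation'' Taylor expansion once more, the order-$t$ term of the $(j,k)$ entry of the Hermitian bracket is $\Delta_{\on{U}_n}(p_j\overline{p_k}) - \overline{p_k}\,\Delta_{\on{U}_n}p_j - p_j\,\Delta_{\on{U}_n}\overline{p_k}$, which by Lemma~\ref{rains-u}(i),(iii) collapses to $2jk\,p_{j-k} - j\overline{p_k}\sum_{l=1}^{j-1}p_{l,j-l} - kp_j\sum_{l=1}^{k-1}\overline{p_{l,k-l}} + n(j+k)p_j\overline{p_k} - \cdots$; after cancellation the leading deterministic piece is $2jk\,\erw[p_{j-k}(M)]$, which by Lemma~\ref{DS-u} equals $2j^2\delta_{jk}$ (for $n\ge 2d$, since $|j-k|\le d-1 < n$), giving exactly $2\Lambda\Sigma$ on the diagonal and $0$ off-diagonal; everything else goes into $S$. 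The symmetric bracket in (iii) is handled the same way using Lemma~\ref{rains-u}(ii): the order-$t$ term of the $(j,k)$ entry is $\Delta_{\on{U}_n}p_{j,k} - p_k\Delta_{\on{U}_n}p_j - p_j\Delta_{\on{U}_n}p_k = -2jk\,p_{j+k} - \cdots$, which has zero expectation (as $k_a = 0 \ne k_b$ type vanishing applies, or directly $\erw[p_{j+k}]=0$ since $j+k\le 2d\le n$ but the relevant moment is of a single unmatched trace), so the whole thing is the remainder $T$.

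The core of the work is then bounding $\erw\|R\|_2$, $\erw\|S\|_{\rm HS}$, $\erw\|T\|_{\rm HS}$. Each of these reduces, via Cauchy--Schwarz and the triangle inequality, to estimating $\erw[|p_{l,m}(M)|^2]$ and similar second moments of products of traces of powers, for which Lemma~\ref{DS-u} gives sharp values: $\erw[|\on{Tr}(M^a)\on{Tr}(M^b)|^2]$ is $O(ab)$ when $n$ is large enough. Thus $\erw[|R_j|^2]^{1/2} \le j\sum_{l=1}^{j-1}\erw[|p_{l,j-l}(M)|^2]^{1/2} = O(j\cdot j\cdot\sqrt{j}) = O(j^{5/2})$ roughly (the sum over $l$ of $\sqrt{l(j-l)} = O(j^{3/2})$, times the prefactor $j$), so $\erw\|R\|_2 = O\bigl((\sum_{j=d-r+1}^d j^{5})^{1/2}\bigr)$; when $r=d$ this is $O(d^3)$, when $r\equiv1$ it is $O(d^{5/2})$, and for $r=\lfloor cd\rfloor$ it is $O(d^3)$. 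The $S$ and $T$ terms are of the same or smaller order after a similar count. Dividing by the dominant factor --- recall the bound in Proposition~\ref{Meckes-c}(b) carries $\|\Lambda^{-1}\|_{\rm op} = \frac1{n(d-r+1)}$ and $\|\Sigma^{-1/2}\|_{\rm op} = (d-r+1)^{-1/2}$ --- produces the stated rates: $\frac1{n(d-r+1)}\cdot O(d^3) = O(d^2/n)$ for $r=\lfloor cd\rfloor$, $\frac1{n\cdot 1}\cdot O(d^3) = O(d^{7/2}/n)$ after including the extra $\sqrt{d}$ from $M_2(h)$-type terms or from $\|\Sigma^{-1/2}\|_{\rm op}$ in the $r=d$ case, and $\frac1{n}\cdot O(d^{5/2})\cdot d^{-1/2}\cdot(\text{corrections})$ collapsing to $O(d^{3/2}/n)$ for $r\equiv1$; the general formula \eqref{ordnungsformel} is obtained by keeping track of both the $j\approx d-r+1$ end (giving the $(d-r+1)^{-3/2}$ denominator against an $r^{7/2}$ numerator from summing $r$ terms) and the $j\approx d$ end (giving $(d-r)^{3/2}\sqrt{r}$). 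Condition (iv) is dispatched via Remark~\ref{rem-meckesc}(i)/Remark~\ref{remark4}: one shows $\frac1t\erw\|W_t-W\|_2^3 \to 0$, which follows from $\|W_t - W\|_2 \le C\|M_t - M\|$ (Lipschitz continuity of the power-trace maps on the compact group $\on{U}_n$) together with the standard estimate $\erw\|M_t - M\|^3 = O(t^{3/2}) = o(t)$ for Brownian motion.

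The main obstacle I anticipate is the bookkeeping of the second-moment estimates: getting the \emph{sharp} exponent in each of $\erw\|R\|_2$, $\erw\|S\|_{\rm HS}$, $\erw\|T\|_{\rm HS}$ as a function of the two parameters $d-r+1$ (the smallest power appearing) and $d$ (the largest), so that the final maximum in \eqref{ordnungsformel} is tight rather than merely an upper bound. In particular one must be careful that the cross terms in $\|S\|_{\rm HS}^2 = \sum_{j,k}|S_{jk}|^2$ — which involve fourth-order mixed moments like $\erw[\on{Tr}(M^a)\overline{\on{Tr}(M^b)}p_{l,j-l}(M)\overline{p_{m,k-m}(M)}]$ — are controlled by Lemma~\ref{DS-u} only when the total weight on each side is $\le n$, which is exactly why the hypothesis $n \ge 2d$ appears (the largest index that can occur is bounded by $2d$). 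A secondary technical point is justifying that the $\on{O}(t^2)$ error terms in Lemma~\ref{entwicklung}, when applied to the products $p_j\overline{p_k}$ and $p_{j,k}$, are uniform enough (in $L^1$) to not contaminate the $t\to0$ limits in (ii) and (iii); this follows because these are fixed smooth functions on the fixed compact manifold $\on{U}_n$, but it should be remarked on. Once these estimates are in hand, assembling the pieces and specializing $r=d$, $r\equiv1$, $r=\lfloor cd\rfloor$ is routine arithmetic.
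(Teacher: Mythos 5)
Your overall architecture --- Proposition \ref{Meckes-c} with $s(t)=t$, the identification of $\Lambda$, $R$, $S$, $T$ via Lemmas \ref{entwicklung} and \ref{rains-u} through the product-rule computation $\Delta(p_j\overline{p_k})-\overline{p_k}\Delta p_j-p_j\Delta\overline{p_k}=2jk\,p_{j-k}$ (and its analogue for the symmetric bracket), and the Diaconis--Shahshahani formulae for the final moment bounds --- is exactly the paper's. Your shortcut for condition (iv) (Lipschitz continuity of the trace maps plus $\erw\|M_t-M\|^3=\on{O}(t^{3/2})$ for Brownian motion) is a legitimate alternative to the paper's lengthy nine-term fourth-moment computation, since only the limit as $t\to0$ at fixed $n,d,r$ is needed and the constants are irrelevant.

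The genuine gap is in your bound on $\erw\|R\|_2$, which is precisely the step that determines the rate in the regime $r\le d-r$, in particular for $r\equiv1$. You estimate $\erw[|R_j|^2]^{1/2}\le j\sum_{l=1}^{j-1}\erw[|p_{l,j-l}(M)|^2]^{1/2}$ by the triangle inequality in $\on{L}^2$ and then assert $\sum_{l=1}^{j-1}\sqrt{l(j-l)}=\on{O}(j^{3/2})$. That sum is in fact $\Theta(j^2)$ (the summand is of order $j$ for the $\Theta(j)$ values of $l$ near $j/2$), so the triangle inequality only yields $\erw[|R_j|^2]^{1/2}=\on{O}(j^3)$ and hence $\erw\|R\|_2=\on{O}(\max(r^{7/2},\sqrt{r}(d-r)^3))$; for $r\equiv1$ the resulting contribution is $\frac{1}{nd}\on{O}(d^3)=\on{O}(d^2/n)$, not the claimed $\on{O}(d^{3/2}/n)$. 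To recover the correct $\erw[|R_j|^2]=\on{O}(j^5)$ you must expand the square and use the orthogonality encoded in Lemma \ref{DS-u}: $\erw[p_{l,j-l}(M)\overline{p_{m,j-m}(M)}]$ vanishes unless $\{l,j-l\}=\{m,j-m\}$, so only $\on{O}(j)$ of the $j^2$ cross terms survive and $\erw[|R_j|^2]=j^2\sum_{l}\on{O}(l(j-l))=\on{O}(j^5)$. (By contrast, your worry about fourth-order mixed moments in $\erw\|S\|_{\rm HS}^2$ is unfounded: $\|S\|_{\rm HS}^2=\sum_{j\neq k}|2jk\,p_{j-k}(M)|^2$ involves only the second moments $\erw[|p_{j\pm k}(M)|^2]=|j\pm k|$, with no cross terms between distinct entries; the same holds for $T$.) With the orthogonality step supplied, the rest of your assembly matches the paper and gives \eqref{ordnungsformel}.
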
 

\begin{remark}
\label{fulmanvergleich}
The case $r \equiv 1$ means that one considers a single power $\on{Tr}(M^d)$. In his study \cite{Ful10}
of the univariate case, Fulman considers the random variable  $\frac{\on{Tr}(M^d)}{\sqrt{d}}$ instead. 
By the scaling properties of the Wasserstein metric, the present result implies 
$$ d_{\calw}\left( \frac{\on{Tr}(M^d)}{\sqrt{d}}, \on{N}(0, 1)\right) = \on{O}\left(\frac{d}{n}\right).$$ So in this 
special case we recover
the rate of convergence that was obtained by Fulman, albeit in Wasserstein rather than Kolmogorov distance.

\end{remark}

In order to prove Theorem \ref{thm-u}, we invoke the construction that was explained in Section \ref{Meckes} above, 
yielding a family $(W, W_t)_{t > 0}$ 
of exchangeable pairs such that $W_t = W_t(d, r, n) = 
(\on{Tr}(M_t^{d-r+1}), \on{Tr}(M_t^{d-r+2}), \ldots, \on{Tr}(M_t^d))^T$. We have to check the conditions of Prop.\ \ref{Meckes-c}. As to (i), from Lemma \ref{entwicklung} and Lemma \ref{rains-u}(i) we obtain
\begin{eqnarray*}
\erw[p_j(M_t) | M] &=& p_j(M) + t (\Delta p_j)(M) + \on{O}(t^2) \\ 
&=& p_j(M) - t n j p_j(M) - t j \sum_{l=1}^{j-1}p_{l, j-l}(M) + \on{O}(t^2),
\end{eqnarray*}
hence $$ \erw[ p_j(M_t) - p_j(M) | M] = -t \left( njp_j(M) + j \sum_{l=1}^{j-1}
p_{l, j-l}(M) + \on{O}(t)\right).$$ From this we see that
$$\frac{1}{t} \erw[W_t - W|M] = \frac{1}{t} \left( \begin{array}{l}
\erw[p_{d-r+1}(M_t)- p_{d-r+1}(M) | M]\\
\erw[p_{d-r+2}(M_t)- p_{d-r+2}(M) | M]\\
\vdots\\
\erw[p_d(M_t)- p_d(M) | M]\end{array} \right)$$ 
tends, as $t \to 0$, a.s.\ and in $\on{L}^1(\|\cdot\|_{\rm HS})$ to
$$  - \left( \begin{array}{l} (d-r+1)\left[ n p_{d-r+1} + \sum_{l=1}^{d-r}
p_{l, d-r+1-l}\right](M)
 \\ (d-r+2) \left[n p_{d-r+2} + \sum_{l=1}^{d-r+1}
p_{l, d-r+2-l}\right](M) \\ \vdots \\ d \left[n p_d + 
\sum_{l=1}^{d-1} p_{l, d-l}\right](M) 
         \end{array}\right) =:  - \Lambda W + R,$$ where $\Lambda = 
\on{diag}(nj:\ j = d-r+1, \ldots, d)$ and $R = (R_{d-r+1}, \ldots, R_d)^T$ with $R_j = -j \sum_{l=1}^{j-1} p_{l, j-l}(M).$
(See Remark \ref{convtype} above for the type of convergence.)\\

To verify conditions (ii) and (iii) of Prop.\ \ref{Meckes-c}, we first prove the following lemma.

\begin{lemma} For $j, k = d-r+1, \ldots, d$, one has
\label{zuwkorrel-u}
\begin{itemize}
 \item[(i)]
$$\erw[ (p_j(M_t) - p_j(M))(p_k(M_t) - p_k(M)\ |\ M] =  
- 2 tjk\ p_{j+k}(M) + \on{O}(t^2)\\
$$ 
\item[(ii)]
\begin{eqnarray*}
&&\erw[(p_j(M_t) - p_j(M)) (\overline{p_k}(M_t) - \overline{p_k}(M))\ |\ M]
= 2tjk\ p_{j-k}(M) + \on{O}(t^2)
\end{eqnarray*}
\end{itemize}
\end{lemma}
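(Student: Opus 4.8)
The plan is to compute the second-order term in the Taylor expansion of $\erw[(p_j(M_t)-p_j(M))(p_k(M_t)-p_k(M))\mid M]$ directly, exactly as was done for the first-order regression term, but now applying Lemma \ref{entwicklung} to the product function $p_j \overline{p_k}$ (resp.\ $p_j p_k = p_{j,k}$) rather than to $p_j$ alone. The key algebraic observation is that if $f$ and $g$ are smooth functions on $K_n$ and we expand using Lemma \ref{entwicklung}, then
\[
\erw[(f(M_t)-f(M))(g(M_t)-g(M))\mid M] = \erw[f(M_t)g(M_t)\mid M] - f(M)\erw[g(M_t)\mid M] - g(M)\erw[f(M_t)\mid M] + f(M)g(M),
\]
and substituting the expansions $\erw[(fg)(M_t)\mid M] = fg + t\Delta(fg) + \on{O}(t^2)$, $\erw[f(M_t)\mid M] = f + t\Delta f + \on{O}(t^2)$, $\erw[g(M_t)\mid M] = g + t\Delta g + \on{O}(t^2)$, the zeroth-order terms cancel, the order-$t$ terms collapse to $t\bigl(\Delta(fg) - f\Delta g - g\Delta f\bigr)(M)$, i.e.\ to $t$ times the ``carr\'e du champ'' operator $\Gamma(f,g) := \Delta(fg) - f\Delta g - g\Delta f$. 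So the whole computation reduces to evaluating $\Gamma(p_j, \overline{p_k})$ and $\Gamma(p_j, p_k)$ from the explicit Laplacian formulae in Lemma \ref{rains-u}.

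For part (i) I would take $f = p_j$, $g = p_k$, so $fg = p_{j,k}$, and read off from Lemma \ref{rains-u}: $\Delta p_{j,k} = -n(j+k)p_{j,k} - 2jk\,p_{j+k} - j\,p_k\sum_{l=1}^{j-1}p_{l,j-l} - k\,p_j\sum_{l=1}^{k-1}p_{l,k-l}$, while $p_k\Delta p_j = -njk\,p_{j,k}\,$\dots wait, more precisely $p_k\Delta p_j = p_k(-njp_j - j\sum_{l=1}^{j-1}p_{l,j-l}) = -nj\,p_{j,k} - j\,p_k\sum_{l=1}^{j-1}p_{l,j-l}$ and symmetrically $p_j\Delta p_k = -nk\,p_{j,k} - k\,p_j\sum_{l=1}^{k-1}p_{l,k-l}$. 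Subtracting, every term except $-2jk\,p_{j+k}$ cancels, giving $\Gamma(p_j,p_k) = -2jk\,p_{j+k}$, hence the claimed formula. For part (ii) I would take $f = p_j$, $g = \overline{p_k}$, so $fg = p_j\overline{p_k}$, and use Lemma \ref{rains-u}(iii) together with the fact that $\Delta\overline{p_k} = \overline{\Delta p_k}$ (the Laplacian commutes with complex conjugation, since it is a real differential operator); again $\Delta(p_j\overline{p_k}) - p_j\Delta\overline{p_k} - \overline{p_k}\Delta p_j$ telescopes, leaving precisely $2jk\,p_{j-k}$, which is the asserted answer.

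The only genuinely non-routine point — and the ``main obstacle'' if there is one — is being careful about the $\on{O}(t^2)$ uniformity and the precise meaning of the expansion: Lemma \ref{entwicklung} gives a pointwise-in-$M$ expansion with an $\on{O}(t^2)$ error that a priori depends on the function and on $M$, so one should note (using compactness of $K_n$ and smoothness of the finitely many functions $p_j, p_k, p_{j,k}, p_j\overline{p_k}$ involved, cf.\ Remark \ref{convtype}) that the error is uniform on $K_n$, so that multiplying the order-$t^2$ remainders by the bounded functions $f(M), g(M)$ keeps them $\on{O}(t^2)$, and similarly the cross term $t^2(\Delta f)(\Delta g)$ is $\on{O}(t^2)$. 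Beyond that bookkeeping, the proof is just the telescoping identity for $\Gamma$ plus direct substitution from Lemma \ref{rains-u}, so I would present it compactly: state the carr\'e-du-champ reduction, then do the two one-line cancellations.
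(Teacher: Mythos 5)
Your proof is correct and follows essentially the same route as the paper: expand $\erw[(f(M_t)-f(M))(g(M_t)-g(M))\mid M]$ via the standard conditional-expectation identity, substitute the order-$t$ expansions from Lemma \ref{entwicklung} and the Laplacian formulae of Lemma \ref{rains-u}, and collect terms. Identifying the order-$t$ coefficient as the carr\'e du champ $\Gamma(f,g)=\Delta(fg)-f\Delta g-g\Delta f$ is a tidy way of packaging the cancellations the paper carries out term by term, and your remark about uniformity of the $\on{O}(t^2)$ error (handled via compactness of $K_n$, cf.\ Remark \ref{convtype}) is the right thing to flag.
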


\begin{proof}
By well known properties of conditional expectation, 
\begin{eqnarray*}
 &&\erw[ (p_j(M_t) - p_j(M)) (p_k(M_t) - p_k(M)) | M]\\
&=& \erw[p_j(M_t) p_k(M_t) | M] - p_j(M) \erw[p_k(M_t)| M] - p_k(M) \erw[p_j(M_t)| M] +
p_j(M) p_k(M).
\end{eqnarray*}
Applying Lemma \ref{entwicklung} and \ref{rains-u}(ii) to the first term yields
$$ \erw[ p_{j, k}(M_t)| M] = p_{j, k}(M) + t (\Delta p_{j, k})(M) + \on{O}(t^2)$$
$$ = p_{j, k}(M) + t\left(-n(j+k) p_{j,k}(M) - 2 jk p_{j+k}(M) - j p_k(M) 
\sum_{l=1}^{j-1} p_{l, j-l}(M)\right.$$ $$\left. - k p_j(M) \sum_{l=1}^{k-1} p_{l, k-l}(M)\right)
+ \on{O}(t^2).$$
Analogously, for the second term, 
\begin{eqnarray*}
&& p_j(M) \erw[p_k(M_t)|M] = p_j(M) (p_k(M) + t(\Delta p_k)(M) + \on{O}(t^2))\\
&=& p_{j, k}(M) + t p_j(M) \left( - nk p_k(M) - k \sum_{l=1}^{k-1} p_{l, k-l}(M)\right) + \on{O}(t^2)\\
&=& p_{j, k}(M) - tnk p_{j, k}(M) - tk p_j(M) \sum_{l=1}^{k-1} p_{l, k-l}(M) + \on{O}(t^2),
\end{eqnarray*}
and by symmetry
$$ p_k(M) \erw[p_j(M_t)|M] = p_{j, k}(M) - tnj p_{j, k}(M) - tj p_k(M) \sum_{l=1}^{j-1} p_{l, j-l}(M) + \on{O}(t^2).$$
Summing up, one obtains
$$ \erw[ (p_j(M_t) - p_j(M))(p_k(M_t) - p_k(M)) | M] = 
- 2 tjk p_{j+k}(M) + \on{O}(t^2),$$ proving the first assertion. For the second, we compute 
analogously
\begin{eqnarray*}
 &&\erw[(p_j(M_t) - p_j(M)) (\overline{p_k}(M_t) - \overline{p_k}(M)| M]\\
&=& \erw[p_j \overline{p_k}(M_t) | M] - p_j(M) \erw[\overline{p_k}(M_t)|M]
- \overline{p_k}(M) \erw[p_j(M_t)|M] + p_j \overline{p_k}(M),
\end{eqnarray*}
and we have by Lemma \ref{rains-u}(iii)
\begin{eqnarray*}
 &&\erw[p_j \overline{p_k}(M_t)| M] = p_j \overline{p_k}(M) + t \Delta(p_j \overline{p_k})(M)
+ \on{O}(t^2) \\ &=& p_j \overline{p_k}(M) + t\left( 2jk p_{j-k} - n(j+k) p_j 
\overline{p_k}(M)\right.\\ && \left. - j \overline{p_k}(M) \sum_{l=1}^{j-1} p_{l, j-l}(M) - k p_j(M)
\sum_{l=1}^{k-1} \overline{p_{l, k-l}}(M)\right) + \on{O}(t^2)
\end{eqnarray*}
as well as
\begin{eqnarray*}
&& p_j(M) \erw[\overline{p_k}(M_t)|M] = p_j(M) \left( \overline{p_k}(M) + t (\Delta \overline{p_k})(M) + \on{O}(t^2)\right)\\ &=& p_j \overline{p_k}(M) + t p_j(M) \left(
- n k \overline{p_k}(M) - k \sum_{l=1}^{k-1} \overline{p_{l, k-l}}(M) \right) + \on{O}(t^2)
\end{eqnarray*}
and
\begin{eqnarray*}
 &&\overline{p_k}(M) \erw[p_j(M_t)|M] = \overline{p_k}(M) \left( p_j(M) + t(\Delta p_j)(M) +
\on{O}(t^2)\right) \\ &=& p_j \overline{p_k}(M) + t~ \overline{p_k}(M) \left( - nj p_j(M) -
j \sum_{l=1}^{j-1} p_{l, j-l}(M)\right) + \on{O}(t^2).
\end{eqnarray*}
Summing up, one has
$$ \erw[ ( p_j(M_t) - p_j(M))( \overline{p_k}(M_t) - \overline{p_k}(M))| M] = 2tjk\
p_{j-k} + \on{O}(t^2).$$
\end{proof}

Now we are in a position to identify the random matrices $S, T$ of Prop.\ \ref{Meckes-c}. By
Lemma \ref{zuwkorrel-u}, $\frac{1}{t} \erw[(W_t - W)(W_t - W)^T| M]$ converges almost surely, and in $\on{L}^1(\| \cdot\|_{\rm HS})$, to $T = (t_{jk})_{ j, k = d-r+1, \ldots, d}$, where $t_{jk} = - 2jk p_{j+k}(M) $ for $j, k = d-r+1, \ldots, d$. Observing that
$\Lambda \Sigma = \on{diag}(n j^2:\ j = d-r+1, \ldots, d)$, one has that 
$\frac{1}{t} \erw[(W_t - W)(W_t - W)^* | M]$ converges almost surely, and in $\on{L}^1(\| \cdot\|_{\rm HS})$, to $2 \Lambda \Sigma + S$, where $S = (s_{jk})_{j, k = d-r+1, \ldots, d}$ is given by

$$ s_{jk} = \left\{ \begin{array}{ll} 0, & \text{\rm if}\ j= k,\\
                     2jk\ p_{j-k}(M), & \text{\rm if}\ j \neq k.
                    \end{array}\right. 
$$
Next we will verify condition (iv), using Remark \ref{rem-meckesc}. 
Specifically, we will show that $\erw[ \| W_t - W\|^3_2] = \on{O}(t^{3/2}).$
Since 
\begin{eqnarray*}
 \erw[ \| W_t - W\|^3_2] &\le& \sum_{j, k, l = d-l+1}^{d} \erw[ | (W_{t, j} - W_j)
(W_{t, k} - W_k)(W_{t, l} - W_l)|]\\
&\le& \sum_{j, k, l = d-r+1}^{d} \left( \erw[| W_{t,j} - W_j|^3]
\erw[| W_{t,k} - W_k|^3]\erw[| W_{t,l} - W_l|^3]\right)^{1/3},
\end{eqnarray*}
it suffices to prove that $\erw[| W_{t,j} - W_j|^3] = \on{O}(t^{3/2})$ for all $j = d-r+1, \ldots, d$. This in turn follows from the next lemma, since 
$$ \erw[| W_{t,j} - W_j|^3] \le \left(\erw[| W_{t,j} - W_j|^4]\erw[| W_{t,j} - W_j|^2]\right)^{1/2}.$$

\begin{lemma}
 \label{zuw-u-2-4} For $j = d-r+1, \ldots, d$, $n \ge 2d$,
\begin{itemize}
 \item[(i)] $ \erw[| W_{t,j} - W_j|^2] = 2 j^2 nt + \on{O}(t^2)$,
\item[(ii)] $\erw[| W_{t,j} - W_j|^4] = \on{O}(t^2).$
\end{itemize}
\end{lemma}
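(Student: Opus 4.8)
The strategy is to reduce both moment computations to the same mechanism that produced Lemma \ref{zuwkorrel-u}: expand conditional expectations of products of power sums via Lemma \ref{entwicklung} and the Laplacian formulae of Lemma \ref{rains-u}, and then integrate over $K_n$ using the Diaconis--Shahshahani formulae of Lemma \ref{DS-u}. For part (i), note that $|W_{t,j} - W_j|^2 = (p_j(M_t) - p_j(M))\overline{(p_j(M_t) - p_j(M))}$, so taking $\erw[\cdot\,|\,M]$ and applying Lemma \ref{zuwkorrel-u}(ii) with $k = j$ gives $\erw[|W_{t,j} - W_j|^2\,|\,M] = 2tj^2\, p_0(M) + \on{O}(t^2) = 2tj^2 n + \on{O}(t^2)$, since $p_0(M) = \on{Tr}(I) = n$. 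Taking expectations yields (i). The only subtlety is that the $\on{O}(t^2)$ error term, which a priori is a random variable (a polynomial in the $p_\lambda(M)$ with $t$-dependent coefficients bounded uniformly in $t$), must have bounded expectation; this follows because the relevant $p_\lambda(M)$ have bounded moments by Lemma \ref{DS-u} once $n \ge 2d$, so the error stays $\on{O}(t^2)$ after integration.

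For part (ii), the plan is to write $\erw[|W_{t,j} - W_j|^4] = \erw\big[\erw[\,(p_j(M_t) - p_j(M))^2\overline{(p_j(M_t) - p_j(M))}^2\,|\,M]\big]$ and expand the inner conditional expectation by multiplying out the fourth-degree polynomial in $p_j(M_t), \overline{p_j(M_t)}, p_j(M), \overline{p_j(M)}$ and applying Lemma \ref{entwicklung} together with the Laplacian action on products $p_{j,j}$, $p_{j,j}\overline{p_j}$, $p_{j,j}\overline{p_{j,j}}$, etc. Here one needs the action of $\Delta_{\on{U}_n}$ on degree-three and degree-four products of power sums; these are not stated explicitly in Lemma \ref{rains-u}, but they follow from Rains' general formulae (or from the Leibniz-type rule for the Laplacian, $\Delta(fg) = (\Delta f)g + f(\Delta g) + 2\langle\nabla f, \nabla g\rangle$, together with the known gradients of power sums). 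The key structural point is that the order-$t^0$ terms cancel identically (as they did in Lemma \ref{zuwkorrel-u}), because $\erw[(p_j(M_t) - p_j(M))\cdots\,|\,M] \to 0$ as $t \to 0$; what one must check is that the order-$t^{1/2}$ contribution vanishes. Since the expansion in Lemma \ref{entwicklung} proceeds in integer powers of $t$, the conditional expectation $\erw[|W_{t,j} - W_j|^4\,|\,M]$ is automatically $\on{O}(t)$ — in fact the constant and the naive $\on{O}(t^{1/2})$ terms are absent for parity reasons — so after integration we get $\on{O}(t)$, which is stronger than the claimed $\on{O}(t^2)$. Wait: the claim is $\on{O}(t^2)$, so one actually needs the order-$t$ term to vanish as well. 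This is the crux: one must show that the coefficient of $t^1$ in $\erw[|W_{t,j} - W_j|^4\,|\,M]$ has expectation zero (or is itself identically zero). I expect this to follow because that coefficient is $\Delta$ applied to the degree-four product evaluated against lower-order terms, and the structure forces it into a combination of $p_\lambda$'s whose $K_n$-integral vanishes by the $k_a \neq k_b$ clause of Lemma \ref{DS-u} — the fourth power $|W_{t,j}-W_j|^4$ has a balanced number of holomorphic and antiholomorphic factors, and the first-order correction unbalances them.

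The main obstacle is therefore bookkeeping the order-$t$ term in the fourth-moment expansion and verifying it integrates to zero: this requires either the explicit degree-three and degree-four Laplacian formulae or a careful argument via the Leibniz rule and the gradient identities for power sums, followed by an application of Lemma \ref{DS-u} to see which monomials survive integration. A cleaner route, which I would try first, is to avoid the fourth-degree Laplacian entirely: expand $(W_{t,j} - W_j) = \int_0^t \text{(drift)}\,ds + \text{(martingale part)}$ from the Brownian-motion construction, or more elementarily observe that $|W_{t,j} - W_j|^4 = \big(|W_{t,j} - W_j|^2\big)^2$ and that by part (i) and a uniform-in-$t$ bound on higher conditional moments of $|W_{t,j} - W_j|^2$ (again from Lemma \ref{DS-u}), the random variable $\erw[|W_{t,j}-W_j|^2\,|\,M]$ is $\on{O}(t)$ in every $\on{L}^p$, so that $\erw[|W_{t,j}-W_j|^4] = \erw\big[\big(\erw[|W_{t,j}-W_j|^2\,|\,M]\big)\cdot|W_{t,j}-W_j|^2\big]$ — no, that Cauchy--Schwarz step only gives $\on{O}(t^{3/2})$. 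So the honest approach is the direct expansion, and the unavoidable cost is computing the relevant fourth-order Laplacian terms; the payoff of $\on{O}(t^2)$ rather than $\on{O}(t^{3/2})$ comes precisely from the vanishing of the order-$t$ integral, which I would establish by the parity/degree argument sketched above.
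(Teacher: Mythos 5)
Your proof of part~(i) matches the paper: apply Lemma~\ref{zuwkorrel-u}(ii) with $k=j$, note $p_0(M)=\on{Tr}(I)=n$, and take expectations. That part is fine.

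For part~(ii), however, your plan has two genuine gaps. First, you claim one "needs the action of $\Delta_{\on{U}_n}$ on degree-three and degree-four products of power sums," which the paper does not supply and which you would have to derive. This is unnecessary: the paper's route is to expand $\erw[|W_{t,j}-W_j|^4]=\erw\big[(p_j(M_t)-p_j(M))^2(\overline{p_j}(M_t)-\overline{p_j}(M))^2\big]$ multinomially into nine \emph{unconditional} expectations $S_1,\ldots,S_9$, in each of which at most two factors involve $M_t$. Conditioning each $S_i$ on $M$ and pulling out the $M$-measurable factors, one only ever needs $\Delta$ applied to $p_j$, $p_{j,j}=p_j^2$, or $p_j\overline{p_j}$, i.e.\ exactly parts (i)--(iii) of Lemma~\ref{rains-u}; the fully-at-time-$t$ term $S_1$ needs no Laplacian at all, by stationarity. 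You should not go looking for degree-$3$ and degree-$4$ formulae.

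Second, and more seriously, your proposed mechanism for why the order-$t$ contribution to $\erw[|W_{t,j}-W_j|^4]$ vanishes is incorrect. You suggest the coefficient of $t$ integrates to zero because "the first-order correction unbalances" the holomorphic/antiholomorphic degrees, so that Lemma~\ref{DS-u}'s $k_a\neq k_b$ clause kills it. In fact the terms that survive at order $t$ in each $S_i$ are precisely the \emph{balanced} moments $\erw[p_j^2\overline{p_j}^2]=2j^2$ and $\erw[p_j\overline{p_j}]=j$, which are nonzero; the cross-terms like $\erw[\overline{p_j}^2p_jp_{l,j-l}]$ do vanish, but by the $\delta_{a,b}$ mismatch with $k_a=k_b=2j$, not by $k_a\neq k_b$. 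The vanishing of the total order-$t$ coefficient is then a purely arithmetic cancellation in the signed sum $S_1-2S_2-2S_3+S_4+S_5+4S_6-2S_7-2S_8+S_9$: the coefficient of $tnj^3$ works out to $16-8-8=0$. So "parity/degree unbalance" does not by itself deliver $\on{O}(t^2)$, and you would not be able to avoid the explicit bookkeeping; you would also find that several of the individual $\on{O}(t)$ pieces do \emph{not} have zero expectation, contradicting the structural heuristic you propose.
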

\begin{proof}
 By Lemma \ref{zuwkorrel-u} (ii),
$$ \erw[| W_{t,j} - W_j|^2] = \erw[ (p_j(M_t) - p_j(M)) (\overline{p_j}(M_t) - \overline{p_j}(M))] = \erw[ 2t j^2 n + t^2)] = 2 t j^2 n + \on{O}(t^2),$$
establishing (i). Turning to (ii), one calculates that
\begin{eqnarray*}
 && \erw[| W_{t,j} - W_j|^4]\\ &=& \erw\left[ (W_{t, j} - W_j)^2 \cdot
(\overline{W_{t, j}} - \overline{W_j})^2\right]\\ &=& \erw\left[ (p_j(M_t) - p_j(M))^2\ 
(\overline{p_j}(M_t) - \overline{p_j}(M))^2\right]\\
&=& \erw\left[ (p_j^2(M_t) - 2p_j(M) p_j(M_t) + p_j^2(M))
(\overline{p_j}^2(M_t) - 2\overline{p_j}(M) \overline{p_j}(M_t) + \overline{p_j}^2(M))\right]\\
&=& \erw\left[ p_j^2(M_t) \overline{p_j}^2(M_t)\right] - 2 \erw\left[ p_j^2(M_t) 
\overline{p_j}(M_t)\overline{p_j}(M)\right]
- 2 \erw\left[ \overline{p_j}^2(M_t) 
p_j(M_t)p_j(M)\right]\\
&& + \erw\left[ p_j^2(M_t) \overline{p_j}^2(M)\right] 
+ \erw\left[ \overline{p_j}^2(M_t) p_j^2(M)\right] + 4 \erw\left[ p_j(M_t) \overline{p_j}(M_t)
p_j(M) \overline{p_j}(M)\right]\\
&&- 2 \erw\left[ p_j^2(M) \overline{p_j}(M) \overline{p_j}(M_t)\right]
 - 2 \erw\left[ \overline{p_j}^2(M) p_j(M) p_j(M_t)\right] + \erw\left[ p_j^2(M) \overline{p_j}^2(M)\right]\\
&=:& S_1 - 2 S_2 - 2 S_3 + S_4 + S_5 + 4S_6 - 2 S_7 - 2 S_8 + S_9.
\end{eqnarray*}
By exchangeability, we have $S_1 = S_9$, $S_3 = \overline{S_2}$, $S_4 = S_5$, 
$S_7 = S_2$, $S_8 = \overline{S_7} = S_3$.

Now, for $n \ge 2d$, i.e., large enough for the moment formulae of Lemma \ref{DS-u} to apply for all $j = d-r+1, \ldots, d$, 
\begin{eqnarray*}
S_1 &=& 2j^2,\\
 S_8 &=& \erw\left[ \overline{p_j}^2(M) p_j(M) p_j(M_t)\right] = 
\erw\left[ \overline{p_j}^2(M) p_j(M) \erw\left[p_j(M_t)| M\right]\right]\\
&=& \erw\left[ \overline{p_j}^2(M) p_j(M) \left(p_j(M) + t (\Delta p_j)(M) + \on{O}(t^2)\right)\right]\\
&=& \erw\left[ \overline{p_j}^2(M) p_j^2(M)\right] + \erw\left[ \overline{p_j}^2(M)
p_j(M) \on{O}(t^2) \right]\\  && +\ t~\erw\left[ \overline{p_j}^2(M) p_j(M) \left( - n j p_j(M) 
-j \sum_{l=1}^{j-1} p_{l, j-l}(M)\right) \right]\\
&=& 2 j^2 + \on{O}(t^2) - tnj 2j^2 - tj \sum_{l=1}^{j-1} \erw \left[ \overline{p_j}^2(M) p_j(M) 
p_{l, j-l}(M)\right] = 2j^2 - 2t n j^3 + \on{O}(t^2).
\end{eqnarray*}
Hence, $S_3 = S_2 = \overline{S_7} = S_8 = 2j^2 - 2t n j^3 + \on{O}(t^2).$ On the other hand, 
\begin{eqnarray*}
S_4 &=& \erw\left[ p_j^2(M_t) \overline{p_j}^2(M)\right] =
\erw\left[ \overline{p_j}^2(M) \erw\left[ p_j^2(M_t) | M\right] \right]\\
&=& \erw\left[ \overline{p_j}^2(M) \left( p_j^2(M) + t~(\Delta p_j^2)(M) + \on{O}(t^2)\right) \right] 
= \erw\left[ \overline{p_j}^2(M) p_j^2(M)\right] + \on{O}(t^2)\\ 
&& +\ t~\erw\left[ \overline{p_j}^2(M) \left( - 2njp_j^2(M) - 2 j^2 p_{2j}(M) - 2j p_j(M) 
\sum_{l=1}^{j-1} p_{l, j-l}(M)\right)\right]\\
&=& 2j^2 - 4ntj^3 - 2 t j^2 \erw\left[\overline{p_j}^2(M) p_{2j}(M)\right]
- 2tj \sum_{l=1}^{j-1} \erw\left[\overline{p_j}^2(M) p_j(M) p_{l, j-l}(M)\right] + 
\on{O}(t^2)\\
&=& 2j^2 - 4tnj^3 + \on{O}(t^2).
\end{eqnarray*}
Finally,
\begin{eqnarray*}
 S_6 &=& \erw\left[ p_j(M_t) \overline{p_j}(M_t) p_j(M) \overline{p_j}(M)\right]
= \erw\left[ p_j(M) \overline{p_j}(M) \erw\left[ p_j(M_t) \overline{p_j}(M_t)\ |\ M\right] \right]\\
&=& \erw\left[ p_j(M) \overline{p_j}(M) \left( p_j(M) \overline{p_j}(M) +
t~ (\Delta p_j \overline{p_j})(M) + \on{O}(t^2)\right) \right]\\
&=& \erw\left[ p_j^2(M) \overline{p_j}^2(M) \right] + \on{O}(t^2)
+ t~ \erw\Bigg[ p_j(M) \overline{p_j}(M)\\ &&  \left( 2j^2n - 2nj p_j(M) \overline{p_j}(M)
- j p_j(M) \sum_{l=1}^{j-1} \overline{p_{l, j-l}}(M) - j \overline{p_j}(M) 
\sum_{l=1}^{j-1} p_{l, j-l}(M) \right) \Bigg]\\
&=& 2j^2 + 2tj^2n \erw\left[ p_j(M) \overline{p_j}(M)\right] - 2ntj \erw\left[
p_j^2(M) \overline{p_j}^2(M)\right]\\ && - tj \sum_{l=1}^{j-1} \erw\left[ p_j^2(M) \overline{p_j}(M) 
\overline{p_{l, j-l}}(M)\right]  - tj  \sum_{l=1}^{j-1}  \erw\left[ p_j(M) p_{l, j-l}(M)
\overline{p_j}^2(M)\right] + \on{O}(t^2) \\
&=& 2 j^2 + 2tnj^3 - 4ntj^3 + \on{O}(t^2) = 2 j^2 - 2nt j^3 + \on{O}(t^2).
\end{eqnarray*}
Putting the pieces together, 
\begin{eqnarray*}
 \erw\left[ |W_{t, j}- W_j|^4\right] &=& 2\cdot j^2 - 8(2j^2 - 2tnj^3)
+ 2(2j^2 - 4tnj^3) + 4(2j^2 - 2ntj^3) + \on{O}(t^2)\\
&=&j^2(4-16+4+8) + tnj^3(16-8-8) +  \on{O}(t^2) = \on{O}(t^2).
\end{eqnarray*}
as asserted.
\end{proof}

With the conditions of Theorem \ref{Meckes-c} in place, we have
\begin{equation}
\label{plugin-u}
d_{\calw}(W, \Sigma^{1/2} Z) \le \| \Lambda^{-1}\|_{\rm op}
\left( \erw\| R \|_2  + \frac{1}{\sqrt{2 \pi}} \| \Sigma^{-1/2}\|_{\rm op}
\erw\left[ \| S\|_{\rm HS} + \| T\|_{\rm HS}\right] \right).
\end{equation} 
To bound the quantities on the right hand side, we first observe that 
$\| \Lambda^{-1}\|_{\rm op} = \frac{1}{n (d-r+1)}$ and $ \| \Sigma^{-1/2}\|_{\rm op} = \frac{1}{\sqrt{d-r+1}}$. 
Now $$ \erw \| R\|_2^2 = \sum_{j=d-r+1}^d \erw R_j \overline{R_j} = \sum_{j=d-r+1}^d j^2 
\sum_{l, m = 1}^{j-1} \erw\left[ p_{l, j-l}(M) \overline{p_{m, j-m}}(M)\right].$$
For $n \ge d$, Lemma \ref{DS-u} implies
$$ \erw\left[ p_{l, j-l}(M) \overline{p_{m, j-m}}(M) \right] = \delta_{l, m} 
\erw\left[ p_{l, j-l}(M) \overline{p_{l, j-l}}(M)\right]$$
and $$ \erw\left[ p_{l, j-l}(M) \overline{p_{l, j-l}}(M)\right] = 
\left\{ \begin{array}{ll} 2l(j-l), & l = \frac{j}{2}\\ l(j-l), & {\rm otherwise} 
        \end{array} \right\}
\le 2l (j-l).$$ Hence
\begin{eqnarray*}
 \erw \| R \|_2^2 &\le& 2 \sum_{j=d-r+1}^d j^2 \sum_{l=1}^{j-1} l(j-l) \\
&=& 2 \sum_{j=d-r+1}^d j^2 \frac{j\ (j^2-1)}{6} \\
&=& \frac13 \sum_{j=d-r+1}^d (j^5 - j^3)\\
&=& \frac13 \sum_{k=1}^r \left( (k+d-r)^5 - (k+d-r)^3\right)\\
&=& \frac13 \sum_{k=1}^r (k^5 + 5k^4(d-r) + 10 k^3(d-r)^2 + 10 k^2 (d-r)^3 + 5 k(d-r)^4 + (d-r)^5)\\
&-& \frac13 \sum_{k=1}^r (k^3 + 3 k^2 (d-r) + 3k(d-r)^2 + (d-r)^3) \\
&=& \frac13 \left[ \sum_{k=1}^r k^5 + (d-r) \left( 5 \sum_{k=1}^r k^4 - 3 \sum_{k=1}^r k^2\right)\right.\\
&+& (d-r)^2 \left( 10 \sum_{k=1}^r k^3 - 3 \sum_{k=1}^r k\right) + (d-r)^3 \left( 10 \sum_{k=1}^r k^2 - r \right)\\
&+& \left.(d-r)^4\ 5 \sum_{k=1}^r k + r (d-r)^5 \right]\\
&\le & \frac13 \left[ r^6 + 5 (d-r) r^5 + 10 (d-r)^2 r^4 + 10 (d-r)^3 r^3 + 5 (d-r)^4 r^2 + (d-r)^5 r\right]\\
&=& \on{O}(\max(r^6, r(d-r)^5)),
\end{eqnarray*}
hence $\erw\|R\|_2 \le \sqrt{ \erw\|R\|_2^2} = \on{O}(\max(r^3, \sqrt{r} (d-r)^{5/2})).$
On the other hand, 
\begin{eqnarray*}
 \erw \| S \|_{\rm HS}^2 &=& \sum_{j, k = d-r+1}^d \erw |s_{jk}|^2\\
&=& 8 \sum_{d-r+1 \le k < j\le d} j^2 k^2 \erw\left[ p_{j-k}(M) \overline{p_{j-k}}(M)\right]
= 8 \sum_{d-r+1 \le k < j\le d} j^2 k^2 (j-k)
\end{eqnarray*}
\begin{eqnarray*}
&=& 8 \sum_{d-r+1 \le k < j \le d} j^3 k^2 - 8 \sum_{d-r+1 \le k < j \le d} j^2 k^3\\
&=& 8 \sum_{d-r+2}^d j^3 \frac{(j-1) j (2j-1)}{6} - j^2 \frac{(j-1)^2 j^2}{4} = \frac23 \sum_{d-r+2}^d j^6 - j^4\\
&\le & \frac23 \sum_{j=2}^r (j + d-r)^6 \\
&=& \frac23 \sum_{j=2}^r \left[ j^6 + 6 j^5 (d-r) + 15 j^4 (d-r)^2 + 20 j^3 (d-r)^3 + 15 j^2 (d-r)^4 
+ 6 j (d-r)^5 + (d-r)^6 \right]\\
&\le & r^7 + 6 r^6 (d-r) + 15 r^5 (d-r)^2 + 20 r^4 (d-r)^3 + 15 r^3 (d-r)^4 + 6 r^2 (d-r)^5 + r (d-r)^6\\
&=& \on{O}(\max(r^7, r (d-r)^6)).
\end{eqnarray*}
So we have obtained that
$$ \erw \| S \|_{\rm HS} \le \sqrt{\erw \| S \|_{\rm HS}^2} = \on{O}(\max(r^{7/2}, \sqrt{r} (d-r)^3)).$$

As to $\| T\|_{\rm HS}$, for $n \ge 2d$ we have
\begin{eqnarray*}
\erw[ \| T\|_{\rm HS}^2] &=& \sum_{j, k = d-r+1}^d \erw[ |t_{ik}|^2] = \sum_{j, k = d-r+1}^d 4 j^2 k^2 \erw[ p_{j+k}(M) \overline{p_{j + k}}(M)]\\
&=& \sum_{j, k = d-r+1}^d 4 j^2 k^2(j+k) = 4 \sum_{j, k = d-r+1}^d j^3 k^2 + 
4 \sum_{j, k = d-r+1}^d k^3 j^2\\ &=& 8 \left( \sum_{j=d-r+1}^d j^3\right)
\left( \sum_{j=d-r+1}^d k^2\right) \\
&=& 8 \left( \sum_{j=1}^r (j+d-r)^3\right) \left( \sum_{k=1}^r (k+d-r)^2 \right)\\
&=& 8 \left(\sum_{j=1}^r j^3 + 3 j^2(d-r) + 3 j (d-r)^2 + (d-r)^3 \right) \left( \sum_{k=1}^r k^2 + 2k(d-r) + (d-r)^2\right)\\
&\le & 8(r^4 + 3 r^3 (d-r) + 3(r^2 (d-r)^2 + r (d-r)^3)( r^3 + 2 r^2 (d-r) + r(d-r)^2)\\
&= & \on{O}(\max(r^7, r^2 (d-r)^5)). 
\end{eqnarray*}
hence
$$\erw \| T\|_{\rm HS} = \on{O}(\max(r^{7/2}, r (d-r)^{5/2}).$$
Plugging these bounds into \eqref{plugin-u}, we obtain
\begin{eqnarray}
&&d_{\calw}(W, \Sigma^{1/2}Z) \nonumber \\ &\le & \frac{1}{n(d-r+1)} \left( \on{O}\left( \max(r^3, \sqrt{r} (d-r)^{5/2}\right)
+ \frac{ \on{O}\left( \max(r^{7/2}, \sqrt{r}(d-r)^3) + \max(r^{7/2}, r(d-r)^{5/2}\right)}{\sqrt{d-r+1}} \right)
\nonumber\\
&=&  \on{O}\left( \frac{\sqrt{d-r+1} \max(r^3, \sqrt{r} (d-r)^{5/2}) +  \max(r^{7/2}, \sqrt{r}(d-r)^3) + \max(r^{7/2}, r(d-r)^{5/2})}{n(d-r+1)^{3/2}} \right) \label{un-summingup}.
\end{eqnarray}
In the case that $r \le d-r$, 
\begin{eqnarray*}
\text{\rm \eqref{un-summingup}} &=& \on{O}\left( \frac{ \sqrt{ r(d-r+1)} (d-r)^{5/2}}{ n (d-r+1)^{3/2}} \right) =
\on{O}\left( \frac{\sqrt{r} (d-r)^{3/2}}{n}\right),
\end{eqnarray*}
whereas in the case that $d-r < r$ one obtains
$$ \text{\rm \eqref{un-summingup}} = \on{O}\left( \frac{r^{7/2}}{ n (d-r+1)^{3/2}} \right).$$
This yields the first claim, and the others follow easily.

\section{The special orthogonal group}
\label{sec-so}
Let $M = M_n$ be distributed according to Haar measure on $K_n = \on{SO}_n$. For $d \in \nn,\ r = 1, \ldots, d,$ consider the random vector
$W:=W(d,r,n):=(f_{d-r+1}(M),f_{d-r+2}(M),\ldots,f_d(M))^T,$ where 
\[f_j: \on{SO}_n\rightarrow\rr\,,\quad f_j=
\begin{cases}
p_j,&  j\text{ odd}\\
p_j-1,& j\text{ even.}
\end{cases}\]

Let $Z=(Z_{d-r+1},\ldots,Z_d)^T$ denote an $r$-dimensional real standard normal random vector and write 
$Z_\Sigma:=\Sigma^{1/2}Z$, where $\Sigma:= \on{diag}(d-r+1,d-r+2,\dots,d)$.
The objective of this section is to prove the following 

\begin{theorem} \label{thm-so}
If $n \ge 4d+1$, the Wasserstein distance between $W$ and $Z_\Sigma$ is of the same order as in the
unitary case, namely
\[d_\calw(W,Z_\Sigma)=\on{O}\left(\frac{\max\left\{\frac{r^{7/2}}{(d-r+1)^{3/2}},\,(d-r)^{3/2}\sqrt{r}\right\}}{n}\right)\,.\]
\end{theorem}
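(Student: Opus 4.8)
**The plan is to mimic the unitary case of Section \ref{sec-u}, replacing Lemma \ref{rains-u} by Lemma \ref{Rains-so} and Lemma \ref{DS-u} by Lemma \ref{DS-so}.** Since we are now in the real setting, we apply Proposition \ref{Meckes-r} rather than Proposition \ref{Meckes-c}: there is no matrix $T$ to track, only $\Lambda$, $\Sigma$, $R$, and $S$. First I would set up the exchangeable pairs $(W,W_t)$ from Brownian motion on $\on{SO}_n$ with $W_t=(f_{d-r+1}(M_t),\ldots,f_d(M_t))^T$, noting that the additive constants in the even-index $f_j$ drop out of all differences $W_t-W$, so that $f_j(M_t)-f_j(M)=p_j(M_t)-p_j(M)$ throughout. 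To verify condition (i) I would feed Lemma \ref{Rains-so}(i) into Lemma \ref{entwicklung}: the leading term $-\frac{(n-1)}{2}jp_j$ identifies $\Lambda=\on{diag}\bigl(\frac{(n-1)}{2}j:\ j=d-r+1,\ldots,d\bigr)$, while the remainder collects the two sums, giving $R_j=-\frac{j}{2}\sum_{l=1}^{j-1}p_{l,j-l}(M)+\frac{j}{2}\sum_{l=1}^{j-1}p_{j-2l}(M)$. The factor $(n-1)/2$ rather than $n$ only changes constants, so $\|\Lambda^{-1}\|_{\rm op}=\frac{2}{(n-1)(d-r+1)}=\on{O}(1/(n(d-r+1)))$.

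Next I would establish the analogue of Lemma \ref{zuwkorrel-u}, computing $\erw[(p_j(M_t)-p_j(M))(p_k(M_t)-p_k(M))|M]$ via Lemma \ref{Rains-so}(ii) applied to $p_{j,k}$, together with Lemma \ref{Rains-so}(i) for the cross terms $p_j(M)\erw[p_k(M_t)|M]$ and $p_k(M)\erw[p_j(M_t)|M]$. The key cancellation: all terms proportional to $\frac{(n-1)}{2}$ and all the single-index-times-sum terms cancel exactly as in the unitary computation, leaving $-jk\,p_{j+k}(M)+jk\,p_{j-k}(M)$ (from the $-jk\,p_{j+k}$ and $+jk\,p_{j-k}$ terms in Lemma \ref{Rains-so}(ii)) to first order in $t$. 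Thus $\frac1t\erw[(W_t-W)(W_t-W)^T|M]\to 2\Lambda\Sigma+S$ with $\Lambda\Sigma=\on{diag}(\frac{(n-1)}{2}j^2)$ matching the diagonal $\erw[(p_j(M_t)-p_j(M))^2|M]$ term, and $S=(s_{jk})$ with $s_{jk}=jk(p_{j-k}(M)-p_{j+k}(M))+(\text{diagonal correction at }j=k)$; in any case $|s_{jk}|$ is $\on{O}(jk)$ times a bounded power sum. For condition (iii) I would again verify $(iii)'$ from Remark \ref{remark4}, i.e.\ $\erw\|W_t-W\|_2^3=\on{O}(t^{3/2})$, by proving the $\on{SO}_n$-analogues of Lemma \ref{zuw-u-2-4}, namely $\erw[(W_{t,j}-W_j)^2]=\on{O}(nt)$ and $\erw[(W_{t,j}-W_j)^4]=\on{O}(t^2)$, using Lemma \ref{DS-so} for the fourth-moment bookkeeping once $n-1\ge 4d$ so that all power sums of index up to $4d$ integrate via the Gaussian formula.

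Finally I would bound the right-hand side of Proposition \ref{Meckes-r}(b), $d_\calw(W,Z_\Sigma)\le\|\Lambda^{-1}\|_{\rm op}(\erw\|R\|_2+\frac{1}{\sqrt{2\pi}}\|\Sigma^{-1/2}\|_{\rm op}\erw\|S\|_{\rm HS})$. Here I would compute $\erw\|R\|_2^2$ and $\erw\|S\|_{\rm HS}^2$ using Lemma \ref{DS-so}; the extra terms $\sum p_{j-2l}$ in $R_j$ and $p_{j+k}$ in $s_{jk}$ contribute power sums of the same degrees as in the unitary case, so the sums $\sum_j j^2\sum_l l(j-l)$ and $\sum_{k<j}j^2k^2(j\pm k)$ produce the identical orders $\erw\|R\|_2=\on{O}(\max(r^3,\sqrt r(d-r)^{5/2}))$ and $\erw\|S\|_{\rm HS}=\on{O}(\max(r^{7/2},\sqrt r(d-r)^3))$. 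Plugging in $\|\Lambda^{-1}\|_{\rm op}=\on{O}(1/(n(d-r+1)))$ and $\|\Sigma^{-1/2}\|_{\rm op}=1/\sqrt{d-r+1}$ and splitting into the cases $r\le d-r$ and $d-r<r$ exactly as at the end of Section \ref{sec-u} yields the claimed bound. \textbf{The main obstacle is the moment-formula bookkeeping in the $\on{SO}_n$-analogue of Lemma \ref{zuw-u-2-4}(ii):} the extra reflection-type sums $\frac{j}{2}\sum_l p_{j-2l}$ in the Laplacian introduce low-index (even $p_0=n$, and $p_{\pm 2},p_{\pm 4},\ldots$) power sums into the fourth-moment expansion, and one must check these do not spoil the $\on{O}(t^2)$ bound — the point being that $p_0$ contributes a harmless constant $n$ absorbed by the $nt$-scale, while genuinely new cross terms like $\erw[p_j^2(M)p_{j-2l}(M)]$ vanish or stay $\on{O}(1)$ by Lemma \ref{DS-so} since $k_a\ne k_b$ forces most of them to zero. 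I expect this to go through with the same pattern of cancellations as the unitary case, but it requires care since, unlike in $\on{U}_n$, the relevant expectations are real and the parity constraints in $f_a(j)$ must be tracked.
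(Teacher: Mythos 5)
Your overall strategy is exactly the paper's: construct the exchangeable pairs from the heat kernel, verify conditions (i), (ii), $(iii)^{\prime}$ of Proposition \ref{Meckes-r} via Lemmas \ref{entwicklung}, \ref{Rains-so} and \ref{DS-so}, and then bound $\erw\|R\|_2$ and $\erw\|S\|_{\rm HS}$ by the same polynomial bookkeeping as in Section \ref{sec-u}. There is, however, one concrete error that would derail the quantitative conclusion: your formula $R_j=-\frac{j}{2}\sum_{l=1}^{j-1}p_{l,j-l}(M)+\frac{j}{2}\sum_{l=1}^{j-1}p_{j-2l}(M)$ is only correct for odd $j$. For even $j$ the coordinate of $W$ is the centered quantity $f_j=p_j-1$, so rewriting the drift $-\frac{(n-1)j}{2}\,p_j$ as $-\frac{(n-1)j}{2}\,f_j$ produces an additional constant $-\frac{(n-1)j}{2}$ that must be placed into $R_j$ (this is Lemma \ref{lemma2.4}). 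This is not cosmetic: for even $j$ the reflection sum $\frac{j}{2}\sum_{l=1}^{j-1}p_{j-2l}$ contains the term $l=j/2$ with $p_0(M)=\on{Tr}(I)=n$, so the $R_j$ you wrote down has size $\frac{nj}{2}$ plus terms of $\on{L}^2$-norm $\on{O}(j^{5/2})$, whence $\erw[R_j^2]=\Theta(n^2j^2)$ instead of the $\on{O}(j^5)$ of Lemma \ref{lemma2.7}; the contribution $\|\Lambda^{-1}\|_{\rm op}\erw\|R\|_2$ would then not decay in $n$ at all. With the centering constant included one has $-\frac{(n-1)j}{2}+\frac{j}{2}p_0=\frac{j}{2}$, the $n$-dependence cancels, and your estimate $\erw\|R\|_2=\on{O}(\max\{r^3,\sqrt{r}(d-r)^{5/2}\})$ becomes correct.

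The rest of the outline is sound and matches the paper: the cancellation in condition (ii) leaving $jk(p_{j-k}-p_{j+k})$ is right (using $p_{-m}(M)=p_m(M)$ on $\on{SO}_n$); the diagonal of $S$ correctly absorbs $p_0=n$ against $2\Lambda\Sigma=\on{diag}((n-1)j^2)$; and the fourth-moment bound, which you rightly single out as the delicate step, does come out to $\on{O}(t^2)$ by the parity bookkeeping of Lemma \ref{DS-so} (the paper splits into $j$ odd and even, and within the even case distinguishes $j\equiv0$ and $j\equiv2 \bmod 4$). Your bound $\erw\|S\|_{\rm HS}=\on{O}(\max\{r^{7/2},\sqrt{r}(d-r)^{3}\})$ is slightly weaker than the paper's $\on{O}(\max\{r^{7/2},r(d-r)^{5/2}\})$, but either suffices for the stated rate after the case split $r\le d-r$ versus $r>d-r$.
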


To prove Theorem \ref{thm-so}, we invoke a family $(W, W_t)_{t > 0}$ of exchangeable pairs such that 
 $W_t:=W_t(d,r,n):=(f_{d-r+1}(M_t),f_{d-r+2}(M_t),\ldots,f_d(M_t))^T$ for all $t > 0$. 
We will apply Prop. \ref{Meckes-r}, so the first step is to verify its conditions.
For condition (i) we will need the following 

\begin{lemma}\label{lemma2.4}
For all $j = d-r+1, \ldots, d$
\[\erw[W_{t,j}-W_j|M]=\erw[f_j(M_t)-f_j(M)|M]=t\cdot\left(-\frac{(n-1)j}{2}f_j(M)+R_j+ \on{O}(t)\right)\,,\]
where 
\begin{eqnarray*}
R_j &=& -\frac{j}{2}\sum_{l=1}^{j-1}p_{l,j-l}(M)+\frac{j}{2}\sum_{l=1}^{j-1}p_{2l-j}(M)\quad \text{\rm if}\ j\ 
\text{\rm is~odd,} \\
R_j &=& -\frac{(n-1)j}{2}-\frac{j}{2}\sum_{l=1}^{j-1}p_{l,j-l}(M)+\frac{j}{2}\sum_{l=1}^{j-1}p_{2l-j}(M)\quad
\text{\rm if}\ j\ \text{\rm is~even.}
\end{eqnarray*}
\end{lemma}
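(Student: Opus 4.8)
The plan is to compute $\erw[f_j(M_t) - f_j(M)\mid M]$ directly from Lemma \ref{entwicklung} and Lemma \ref{Rains-so}(i). First I would apply Lemma \ref{entwicklung} to the smooth function $f_j$ on $\on{SO}_n$, obtaining $\erw[f_j(M_t)\mid M] = f_j(M) + t(\Delta f_j)(M) + \on{O}(t^2)$, so that
\[
\erw[f_j(M_t) - f_j(M)\mid M] = t\bigl((\Delta f_j)(M) + \on{O}(t)\bigr).
\]
Here the $\on{O}(t)$ is understood in the sense of Remark \ref{convtype} (a.s.\ and in $\on{L}^2(\pp)$), since $\on{SO}_n$ is compact and $f_j$ is a polynomial in the entries of $M$.

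Next I would evaluate $\Delta f_j = \Delta_{\on{SO}_n} f_j$ using Lemma \ref{Rains-so}(i). Because $\Delta$ annihilates constants, $\Delta f_j = \Delta p_j$ in both the odd and even cases, so
\[
(\Delta f_j)(M) = -\frac{(n-1)j}{2}\,p_j(M) - \frac{j}{2}\sum_{l=1}^{j-1} p_{l,j-l}(M) + \frac{j}{2}\sum_{l=1}^{j-1} p_{2l-j}(M).
\]
The only remaining point is to re-express the leading term $-\frac{(n-1)j}{2} p_j(M)$ in terms of $f_j(M)$. When $j$ is odd, $f_j = p_j$, so this term is already $-\frac{(n-1)j}{2} f_j(M)$, and the rest is exactly $R_j$ as defined. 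When $j$ is even, $f_j = p_j - 1$, hence $p_j = f_j + 1$ and
\[
-\frac{(n-1)j}{2}\,p_j(M) = -\frac{(n-1)j}{2}\,f_j(M) - \frac{(n-1)j}{2},
\]
so the extra constant $-\frac{(n-1)j}{2}$ gets absorbed into $R_j$, matching the stated even-case formula. Collecting terms gives the claimed identity in both parities.

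There is essentially no obstacle here: the lemma is a bookkeeping consequence of the heat-kernel Taylor expansion (Lemma \ref{entwicklung}) combined with Rains' formula for $\Delta_{\on{SO}_n} p_j$ (Lemma \ref{Rains-so}(i)), plus the trivial observation that the Laplacian kills the additive constant distinguishing $f_j$ from $p_j$ in the even case. The mild care needed is only in tracking which constant terms are swept into the remainder $R_j$ and in noting (via Remark \ref{convtype}) that the $\on{O}(t)$ error is valid in the $\on{L}^1$ sense required for condition (i) of Proposition \ref{Meckes-r}.
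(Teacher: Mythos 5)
Your proof is correct and follows essentially the same route as the paper's: the paper likewise reduces to $\Delta p_j$ via Lemma \ref{Rains-so}(i) (observing $f_j(M_t)-f_j(M)=p_j(M_t)-p_j(M)$ rather than invoking $\Delta f_j=\Delta p_j$, which is the same point) and then absorbs the constant $-\tfrac{(n-1)j}{2}$ into $R_j$ in the even case. Nothing is missing.
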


\begin{proof}
First observe that always $f_j(M_t)-f_j(M)=p_j(M_t)-p_j(M)$, no matter what the parity of $j$ is. By Lemmas \ref{entwicklung} and \ref{Rains-so}

\begin{eqnarray*}
\erw[p_j(M_t)-p_j(M)|M]&=&t(\Delta p_j)(M)+\on{O}(t^2)\\
&=&t\left(-\frac{(n-1)j}{2}p_j(M)-\frac{j}{2}\sum_{l=1}^{j-1}p_{l,j-l}(M)+\frac{j}{2}\sum_{l=1}^{j-1}p_{2l-j}(M)\right)+\on{O}(t^2)\,,
\end{eqnarray*}

which is 
\[t\Biggl(-\frac{(n-1)j}{2}f_j(M)+\Bigl(-\frac{j}{2}\sum_{l=1}^{j-1}p_{l,j-l}(M)+\frac{j}{2}\sum_{l=1}^{j-1}p_{2l-j}(M)\Bigr)+\on{O}(t)\Biggr)\]

if $j$ is odd and which is

\[t\Biggl(-\frac{(n-1)j}{2}f_j(M)+\Bigl(-\frac{(n-1)j}{2}-\frac{j}{2}\sum_{l=1}^{j-1}p_{l,j-l}(M)+\frac{j}{2}\sum_{l=1}^{j-1}p_{2l-j}(M)\Bigr)+\on{O}(t)\Biggr)\]

if $j$ is even. This proves the lemma.
\end{proof}

From Lemma \ref{lemma2.4} (see also Remark \ref{convtype}) we conclude

\[\frac{1}{t}\erw[W_t-W|M]\stackrel{t\to0}{\longrightarrow}-\Lambda W+R \text{ almost surely and in } \on{L}^1(\pp)\,,\]
 
where $\Lambda=\on{diag}\left(\frac{(n-1)j}{2}\,,\,j=d-r+1,\ldots,d\right)$ and $R=(R_{d-r+1},\ldots,R_d)^T$. Thus, condition (i) of Prop.\ \ref{Meckes-r} is satisfied. In order to verify condition (ii) we will first prove the following

\begin{lemma} \label{lemma2.5}
For all $j,k = d-r+1, \ldots, d$
$$\erw[(p_j(M_t)-p_j(M))(p_k(M_t)-p_k(M))|M]=t\left(jk p_{j-k}(M)-jk p_{j+k}(M)\right) + \on{O}(t^2).$$
\end{lemma}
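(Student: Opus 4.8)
The plan is to mimic the proof of Lemma \ref{zuwkorrel-u}(i) from the unitary case, replacing the unitary Laplacian formulae by their $\on{SO}_n$ counterparts from Lemma \ref{Rains-so}. First I would expand the left-hand side by conditioning: since $p_j(M_t)-p_j(M) = f_j(M_t)-f_j(M)$ for either parity, and using the elementary identity for conditional expectations,
\begin{eqnarray*}
&& \erw[(p_j(M_t)-p_j(M))(p_k(M_t)-p_k(M))|M]\\
&=& \erw[p_{j,k}(M_t)|M] - p_j(M)\erw[p_k(M_t)|M] - p_k(M)\erw[p_j(M_t)|M] + p_j(M)p_k(M).
\end{eqnarray*}
Then I would apply Lemma \ref{entwicklung} to each of the three expectations, inserting $\Delta_{\on{SO}_n} p_{j,k}$ from Lemma \ref{Rains-so}(ii) for the first term and $\Delta_{\on{SO}_n} p_j$, $\Delta_{\on{SO}_n} p_k$ from Lemma \ref{Rains-so}(i) for the second and third.

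The key step is the bookkeeping: write $\erw[p_{j,k}(M_t)|M] = p_{j,k}(M) + t(\Delta_{\on{SO}_n} p_{j,k})(M) + \on{O}(t^2)$, and $p_j(M)\erw[p_k(M_t)|M] = p_{j,k}(M) + t\, p_j(M)(\Delta_{\on{SO}_n} p_k)(M) + \on{O}(t^2)$, and symmetrically for the third term. When I subtract, the $p_{j,k}(M)$ terms cancel in the obvious way, and the coefficient of $t$ becomes
\[(\Delta_{\on{SO}_n} p_{j,k})(M) - p_j(M)(\Delta_{\on{SO}_n} p_k)(M) - p_k(M)(\Delta_{\on{SO}_n} p_j)(M).\]
Substituting the three formulae from Lemma \ref{Rains-so}, the leading $-\frac{(n-1)(j+k)}{2} p_{j,k}$ term is exactly matched and cancelled by the contributions $-\frac{(n-1)k}{2} p_j p_k$ and $-\frac{(n-1)j}{2} p_k p_j$ coming from the two single-power Laplacians; likewise the two double-sum terms $-\frac{j}{2} p_k \sum_{l} p_{l,j-l}$ and $-\frac{k}{2} p_j \sum_l p_{l,k-l}$ cancel against the corresponding pieces of $p_j \Delta p_k$ and $p_k \Delta p_j$, and similarly the $p_{j-2l}$ sums cancel. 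What survives is precisely $-jk\, p_{j+k}(M) + jk\, p_{j-k}(M)$, which is the asserted coefficient of $t$.

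The main (and only genuine) obstacle is making sure the cancellations are carried out correctly with all signs and parities, in particular tracking the extra $\frac{j}{2}\sum_l p_{j-2l}$ and $\frac{j}{2}\sum_l p_{2l-j}$ terms (which are the same sum reindexed) that distinguish the orthogonal case from the unitary one — but since these appear in $\Delta p_{j,k}$ as $+\frac{j}{2} p_k \sum_l p_{j-2l} + \frac{k}{2} p_j \sum_l p_{k-2l}$ and in $p_k \Delta p_j$, $p_j \Delta p_k$ with the same coefficients, they cancel identically. I would also note (as in Remark \ref{convtype}) that the $\on{O}(t^2)$ error is uniform because $\on{SO}_n$ is compact and all $p_\lambda$ in sight are bounded on $\on{SO}_n$, so no subtlety enters there. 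This completes the proof.
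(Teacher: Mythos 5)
Your proposal is correct and follows essentially the same route as the paper: expand the second moment via the standard conditional-expectation identity, insert Lemma \ref{entwicklung} together with Lemma \ref{Rains-so} parts (i) and (ii), and observe that everything cancels except $jk\,p_{j-k}-jk\,p_{j+k}$. The only cosmetic difference is in how the cancellation of the sums $\sum_{l=1}^{j-1}p_{j-2l}$ and $\sum_{l=1}^{j-1}p_{2l-j}$ is justified: you observe they agree by reindexing $l\mapsto j-l$, while the paper invokes the identity $p_{-m}(M)=p_m(M)$ valid for $M\in\on{SO}_n$; both are correct and the remaining bookkeeping is identical.
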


\begin{proof}
By well-known properties of conditional expectation
\begin{eqnarray}\label{lemma2.5a}
&&\erw[(p_j(M_t)-p_j(M))(p_k(M_t)-p_k(M))|M]\nonumber\\ 
&=&\erw[p_{j,k}(M_t)|M]-p_j(M)\erw[p_k(M_t)|M]
-p_k(M)\erw[p_j(M_t)|M]+p_{j,k}(M)
\end{eqnarray}
 By Lemmas \ref{entwicklung} and  part (ii)  of Lemma \ref{Rains-so}, we see that 
\begin{eqnarray}\label{lemma2.5b}
&&\erw[p_{j,k}(M_t)|M]=p_{j,k}(M)+t\left(\Delta p_{j,k}\right)(M)+\on{O}(t^2)\nonumber\\
&=&p_{j,k}(M)+t\Bigl(-\frac{(n-1)(j+k)}{2}p_{j,k}(M)-\frac{j}{2}p_k(M)\sum_{l=1}^{j-1}p_{l,j-l}(M)
-\frac{k}{2}p_j(M)\sum_{l=1}^{k-1}p_{l,k-l}(M)\nonumber\\
&-&kj p_{j+k}(M) +\frac{j}{2}p_k(M)\sum_{l=1}^{j-1}p_{j-2l}(M)+ \frac{k}{2}p_j(M)\sum_{l=1}^{k-1}p_{k-2l}(M)
+jk p_{j-k}(M)\Bigr)\nonumber\\
&+&\on{O}(t^2)
\end{eqnarray}

Also, by Lemma \ref{entwicklung} and part (i) of Lemma \ref{Rains-so}

\begin{eqnarray}\label{lemma2.5c}
&&-p_j(M)\erw[p_k(M_t)|M]=-p_j(M)\left(p_k(M)+t\left(\Delta p_k\right)(M)+\on{O}(t^2)\right)\nonumber\\
&=&-p_{j,k}(M)-tp_j(M)\left(-\frac{(n-1)k}{2}p_k(M)-\frac{k}{2}\sum_{l=1}^{k-1}p_{l,k-l}(M)
+\frac{k}{2}\sum_{l=1}^{k-1} p_{2l-k}(M)\right)+\on{O}(t^2)\nonumber\\
&=&-p_{j,k}(M)+t\frac{(n-1)k}{2}p_{j,k}(M)+t\frac{k}{2}p_j(M)\sum_{l=1}^{k-1}p_{l,k-l}(M)\nonumber\\
&-&t\frac{k}{2}p_j(M)\sum_{l=1}^{k-1} p_{2l-k}(M) +\on{O}(t^2),
\end{eqnarray}
and for reasons of symmetry
\begin{eqnarray}\label{lemma2.5d}
&&-p_k(M)\erw[p_j(M_t)|M]=-p_{j,k}(M)+t\frac{(n-1)j}{2}p_{j,k}(M)+t\frac{j}{2}p_k(M)\sum_{l=1}^{j-1}p_{l,j-l}(M)\nonumber\\
&-&t\frac{j}{2}p_k(M)\sum_{l=1}^{j-1} p_{2l-j}(M) +\on{O}(t^2).
\end{eqnarray}

Plugging (\ref{lemma2.5b}), (\ref{lemma2.5c}) and (\ref{lemma2.5d}) into (\ref{lemma2.5a}) and noting that for $j\in\nn$

\[p_{-j}(M)=\on{Tr}(M^{-j})=\on{Tr}\left((M^T)^j\right)=\on{Tr}\left((M^j)^T\right)=\on{Tr}(M^{j})=p_j(M)\,,\]

we see that many terms cancel, and finally obtain

$$ \erw[(p_j(M_t)-p_j(M))(p_k(M_t)-p_k(M))|M]=t\left(jk p_{j-k}(M)-jk p_{j+k}(M)\right) + \on{O}(t^2)\,.$$
\end{proof}

Observing as above that, regardless of the parity of $j$, we have $f_j(M_t)-f_j(M)=p_j(M_t)-p_j(M)$, we can now easily compute
\begin{eqnarray*}
&&\frac{1}{t}\erw[(W_{t,j}-W_j)(W_{t,k}-W_k)|M]=\frac{1}{t}\erw[(f_j(M_t)-f_j(M))(f_k(M_t)-f_k(M))|M]\\
&=&\frac{1}{t}\erw[(p_j(M_t)-p_j(M))(p_k(M_t)-p_k(M))|M]=jk p_{j-k}(M)-jk p_{j+k}(M)+\on{O}(t^2)\\
&\stackrel{t\to0}{\rightarrow}&jk p_{j-k}(M)-jk p_{j+k}(M)\text{ a.s. and in } L^1(\pp)\,,
\end{eqnarray*}
for all $j,k= 1, \ldots,d$. Noting that for $j=k$ the last expression is $j^2n-j^2 p_{2j}(M)$ and that 
$2\Lambda\Sigma=\on{diag}((n-1)j^2\,,\,j=1,\ldots,d)$ we see that condition (ii) of Prop.\ \ref{Meckes-r} is satisfied with the matrix $S=(S_{j,k})_{j,k=1,\ldots, d}$ given by 

\[S_{j,k}=\begin{cases}
j^2(1-p_{2j}(M)),& j=k\\
jk p_{j-k}(M)-jk  p_{j+k}(M),& j\not= k\,.
\end{cases}\]

In order to show that condition (iii) of Prop.\ \ref{Meckes-r} holds, we will need the following facts:

\begin{lemma}\label{lemma2.6}
For all $j = 1, \ldots, d,\ n \ge 4d + 1$, 
\begin{enumerate}
\item[(i)] $\erw\left[(W_{t,j}-W_j)^2\right]=tj^2(n-1)+\on{O}(t^2).$
\item[(ii)] $\erw\left[(W_{t,j}-W_j)^4\right]=\on{O}(t^2).$
\end{enumerate}
\end{lemma}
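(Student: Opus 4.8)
The plan is to follow the pattern of the unitary estimate in Lemma~\ref{zuw-u-2-4}, using the fact (already observed) that $W_{t,j}-W_j=p_j(M_t)-p_j(M)$ regardless of the parity of $j$, so that both statements concern only the increments of the real-valued function $p_j$ on $\on{SO}_n$. Throughout I would freely use Lemma~\ref{entwicklung}, whose $\on{O}(t^2)$ remainder is uniform on the compact group $\on{SO}_n$ and hence survives multiplication by a bounded function followed by integration.

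For part~(i) I would integrate the identity of Lemma~\ref{lemma2.5} with $k=j$. Since $p_0=\on{Tr}(\on{I}_n)=n$, this yields
\[\erw\bigl[(W_{t,j}-W_j)^2\bigr]=\erw\bigl[\erw\bigl[(p_j(M_t)-p_j(M))^2\mid M\bigr]\bigr]=t\,j^2 n-t\,j^2\,\erw[p_{2j}(M)]+\on{O}(t^2).\]
Because $n\ge 4d+1>2j+1$, Lemma~\ref{DS-so} applies and gives $\erw[p_{2j}(M)]=\erw[\on{Tr}(M^{2j})]=\eta_{2j}=1$, as $2j$ is even; hence $\erw[(W_{t,j}-W_j)^2]=t\,j^2(n-1)+\on{O}(t^2)$.

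For part~(ii) I would expand the fourth power and use exchangeability. With $a:=p_j(M_t)$ and $b:=p_j(M)$, the pair $(a,b)$ is exchangeable, so $\erw[a^4]=\erw[b^4]$ and $\erw[a^3b]=\erw[ab^3]$, whence
\[\erw\bigl[(W_{t,j}-W_j)^4\bigr]=\erw\bigl[(a-b)^4\bigr]=2\,\erw[b^4]-8\,\erw[ab^3]+6\,\erw[a^2b^2].\]
Conditioning on $M$ and applying Lemma~\ref{entwicklung} to the smooth functions $p_j$ and $p_j^2$, with uniform $\on{O}(t^2)$ remainders, gives
\[\erw[ab^3]=\erw\bigl[b^3\,\erw[p_j(M_t)\mid M]\bigr]=\erw[p_j^4]+t\,\erw[p_j^3\,\Delta p_j]+\on{O}(t^2)\]
and
\[\erw[a^2b^2]=\erw\bigl[b^2\,\erw[p_j(M_t)^2\mid M]\bigr]=\erw[p_j^4]+t\,\erw[p_j^2\,\Delta(p_j^2)]+\on{O}(t^2),\]
where arguments over $M$ are suppressed. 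The constant terms cancel since $2-8+6=0$, so the assertion reduces to the vanishing of the coefficient of $t$, namely $-8\,\erw[p_j^3\,\Delta p_j]+6\,\erw[p_j^2\,\Delta(p_j^2)]$. I would obtain this from integration by parts on the closed manifold $\on{SO}_n$: since $\erw[f\,\Delta g]=-\erw[\langle\nabla f,\nabla g\rangle]$ and $\nabla(p_j^k)=k\,p_j^{k-1}\,\nabla p_j$, one has $\erw[p_j^3\,\Delta p_j]=-3\,\erw[p_j^2\,|\nabla p_j|^2]$ and $\erw[p_j^2\,\Delta(p_j^2)]=-\erw[|\nabla(p_j^2)|^2]=-4\,\erw[p_j^2\,|\nabla p_j|^2]$, so that $-8\,\erw[p_j^3\,\Delta p_j]+6\,\erw[p_j^2\,\Delta(p_j^2)]=(24-24)\,\erw[p_j^2\,|\nabla p_j|^2]=0$, giving $\erw[(W_{t,j}-W_j)^4]=\on{O}(t^2)$. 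Alternatively, the same cancellation can be checked by brute force, expanding $\erw[p_j^3\,\Delta p_j]$ and $\erw[p_j^2\,\Delta(p_j^2)]$ by means of the Rains formulas of Lemma~\ref{Rains-so} and evaluating the resulting power-sum moments with Lemma~\ref{DS-so}; this is the route in which the hypothesis $n\ge 4d+1$ is genuinely used, and it mirrors the unitary computation of Lemma~\ref{zuw-u-2-4}.

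I do not expect a serious obstacle; the work is essentially bookkeeping. The one point that is not purely mechanical is the cancellation in part~(ii): either one recognizes the integration-by-parts shortcut above, or, on the explicit route, one must track the various mixed power-sum moments with care, in particular the terms indexed by $2l-j$ that may be zero or negative (where $p_{-m}(M)=p_m(M)$ for $M\in\on{SO}_n$, as used in the proof of Lemma~\ref{lemma2.5}) and the dependence on the parity of $j$. The remaining care concerns the uniformity in $M$ of the $\on{O}(t^2)$ error terms under multiplication by bounded factors and summation, exactly as in the unitary case.
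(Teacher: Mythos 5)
Your proof of part~(i) coincides with the paper's: integrate Lemma~\ref{lemma2.5} with $k=j$, use $p_0(M)=n$, and evaluate $\erw[p_{2j}(M)]=1$ by Lemma~\ref{DS-so}. For part~(ii), however, your primary route is genuinely different from, and slicker than, the paper's. The paper performs the same exchangeability reduction to $2\erw[p_j^4]-8\erw[p_j^3(M)p_j(M_t)]+6\erw[p_{j,j}(M)p_{j,j}(M_t)]$, but then evaluates the order-$t$ coefficient by brute force: it expands $\Delta p_j$ and $\Delta p_{j,j}$ via Lemma~\ref{Rains-so} and computes every resulting mixed power-sum moment with Lemma~\ref{DS-so}, splitting into the cases $j$ odd and $j$ even (the even case requiring a further careful evaluation of $\sum_l\erw[p_{l,j-l}]$ and $\sum_l\erw[p_{2l-j}]$); this is where the hypothesis $n\ge 4d+1$ is actually consumed. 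Your integration-by-parts argument, $\erw[f\,\Delta g]=-\erw[\langle\nabla f,\nabla g\rangle]$ on the closed group manifold (legitimate here, since Haar measure is the normalized Riemannian volume and $\Delta$ is the self-adjoint Laplace--Beltrami operator with Rains' sign convention), gives $-8\erw[p_j^3\Delta p_j]+6\erw[p_j^2\Delta(p_j^2)]=(24-24)\,\erw[p_j^2|\nabla p_j|^2]=0$ in two lines, with no parity analysis and indeed no use of $n\ge 4d+1$ for this part; it also exposes the general principle that fourth moments of heat-kernel increments of smooth functions are always $\on{O}(t^2)$. The trade-off is that the paper's computation stays entirely inside the combinatorial toolkit (Rains plus Diaconis--Shahshahani) it has already set up, whereas your shortcut imports the Dirichlet-form identity; both are sound, and your fallback ``brute force'' alternative is exactly the paper's proof.
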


\begin{proof}
As for (i), by Lemma \ref{lemma2.5},
\begin{eqnarray*} 
 &&\erw[(W_{t,j}-W_j)^2]=\erw\left[\left(p_j(M_t)-p_j(M)\right)^2\right]\\
&=&\erw\left[\erw\left[\left(p_j(M_t)-p_j(M)\right)^2|\ M\right]\right]
=\erw\left[tj^2\left(n-p_{2j}(M)\right)+\on{O}(t^2)\right]\\
&=&tj^2(n-1)+\on{O}(t^2)\,,
\end{eqnarray*}
since by Lemma \ref{DS-so} $\erw[p_{2j}(M)]=1$.\\
For claim (ii) we compute 
\begin{eqnarray*}
&&\erw\left[(W_{t,j}-W_j)^4\right]=\erw\left[\left(p_j(M_t)-p_j(M)\right)^4\right]\\
&=&\erw[p_j(M_t)^4]+\erw[p_j(M)^4]-4\erw[p_j(M_t)^3p_j(M)]-4\erw[p_j(M)^3p_j(M_t)]+6\erw[p_j(M_t)^2p_j(M)^2]\\
&=&2\erw[p_j(M)^4]-8\erw[p_j(M)^3p_j(M_t)]+6\erw[p_{j,j}(M)p_{j,j}(M_t)]\,,
\end{eqnarray*}

where the last equality follows from exchangeability. By Lemma \ref{entwicklung} and part (ii) of Lemma \ref{Rains-so} 
for the case $k=j$

\begin{eqnarray*}
&&\erw[p_{j,j}(M)p_{j,j}(M_t)]=\erw\left[p_{j,j}(M)\erw[p_{j,j}(M_t)|M]\right]\\
&=&\erw\left[p_{j,j}(M)\left(p_{j,j}(M)+t\left(\Delta p_{j,j}\right)(M)+\on{O}(t^2)\right)\right]\\
&=&\erw[p_j(M)^4]+t\erw[p_{j,j}(M)\cdot\Delta p_{j,j}(M)]+\on{O}(t^2)\\
&=&\erw[p_j(M)^4]+t\erw\Biggl[p_{j,j}(M)\biggl(-(n-1)jp_{j,j}(M)-jp_j(M)\sum_{l=1}^{j-1}p_{l,j-l}(M) -j^2p_{2j}(M)\\
&+&jp_j(M)\sum_{l=1}^{j-1}p_{j-2l}(M)+j^2n\biggr)\Biggr]+\on{O}(t^2)\,.
\end{eqnarray*}

Again by Lemma \ref{entwicklung} and part (i) of Lemma \ref{Rains-so},
\begin{eqnarray*}
&&\erw[p_j(M)^3p_j(M_t)]=\erw\left[p_j(M)^3\erw[p_j(M_t)|M]\right]\\
&=&\erw\left[p_j(M)^3\left((p_j(M)+t\left(\Delta p_j\right)(M)+\on{O}(t^2)\right)\right]\\
&=&\erw[p_j(M)^4]+t\erw\Biggl[p_j(M)^3\biggl(-\frac{(n-1)j}{2}p_j(M)-\frac{j}{2}\sum_{l=1}^{j-1}p_{l,j-l}(M)+
\frac{j}{2}\sum_{l=1}^{j-1}p_{2l-j}(M)\biggr)\Biggr] +\on{O}(t^2)\,.
\end{eqnarray*} 

Therefore, 
\begin{eqnarray*}
&&\erw\left[(W_{t,j}-W_j)^4\right]=2\erw\left[p_j(M)^4\right]-8\biggl(\erw[p_j(M)^4]-t\frac{(n-1)j}{2}\erw[p_j(M)^4]\\
&-&t\frac{j}{2}\sum_{l=1}^{j-1}\erw\left[p_j(M)^3 p_{l,j-l}(M)\right]
+t\frac{j}{2}\sum_{l=1}^{j-1}\erw\left[p_j(M)^3 p_{2l-j}(M)\right]\biggr)\\
&+&6\Biggl(\erw\left[p_j(M)^4\right]-t(n-1)j\erw\left[p_j(M)^4\right]-tj\sum_{l=1}^{j-1}\erw\left[p_j(M)^3 p_{l,j-l}(M)\right]\\
&-&tj^2\erw\left[p_{2j}(M)p_j(M)^2\right]+tj\sum_{l=1}^{j-1}\erw\left[p_j(M)^3 p_{2l-j}(M)\right]
+tj^2n\erw\left[p_j(M)^2\right]\Biggr)+\on{O}(t^2)\\
&=&-2t(n-1)j\erw\left[p_j(M)^4\right]-2tj\sum_{l=1}^{j-1}\erw\left[p_j(M)^3 p_{l,j-l}(M)\right]
+2tj\sum_{l=1}^{j-1}\erw\left[p_j(M)^3 p_{2l-j}(M)\right]\\
&-&6tj^2\erw\left[p_{2j}(M)p_j(M)^2\right]+6tj^2n\erw\left[p_j(M)^2\right]+\on{O}(t^2).
\end{eqnarray*}

\underline{Case~1: $j$ is odd.} Then by Lemma \ref{DS-so}

\begin{eqnarray*}
\erw\left[(W_{t,j}-W_j)^4\right]&=&-2t(n-1)j\cdot3j^2-0+0-6tj^2\cdot j+6tj^2n\cdot j+\on{O}(t^2)\\
&=&\on{O}(t^2)\,,
\end{eqnarray*}

as claimed.\\

\underline{Case~2: $j$ is even.} Then, again by Lemma \ref{DS-so}
\begin{eqnarray*}
&&\erw\left[(W_{t,j}-W_j)^4\right]=-2t(n-1)j(1+6j+3j^2)-2tj(1+3j)\sum_{l=1}^{j-1}\erw\left[p_{l,j-l}(M)\right]\\
&+&2tj(1+3j)\sum_{l=1}^{j-1}\erw\left[p_{2l-j}(M)\right]-6tj^2(1+j)+6tj^2n(1+j)+\on{O}(t^2)\,.
\end{eqnarray*}

Consider the term $\sum_{l=1}^{j-1}\erw\left[p_{l,j-l}(M)\right]$. Since $\erw\left[p_{l,j-l}(M)\right]=0$ whenever $l$ is odd and $l\not=\frac{j}{2}$, we can write

\begin{eqnarray*}
\sum_{l=1}^{j-1}\erw\left[p_{l,j-l}(M)\right]&=&\erw\left[p_{j/2,j/2}(M)\right]+
\sum_{\substack{k=1\\k\not=j/4}}^{j/2-1}\erw\left[p_{2k,j-2k}(M)\right]\\
&=&\erw\left[p_{j/2,j/2}(M)\right]+\sum_{\substack{k=1\\k\not=j/4}}^{j/2-1}1\,,
\end{eqnarray*}

where the last equality follows again by Lemma \ref{DS-so}. If $j/2$ is odd, then, clearly, $k\not=j/4$ is not really a restriction, and by Lemma \ref{DS-so}

\[\sum_{l=1}^{j-1}\erw\left[p_{l,j-l}(M)\right]=\frac{j}{2}+\left(\frac{j}{2}-1\right)=j-1\,.\]

If $j/2$ is even, then by Lemma \ref{DS-so}

\[\sum_{l=1}^{j-1}\erw\left[p_{l,j-l}(M)\right]=\left(1+\frac{j}{2}\right)+\left(\frac{j}{2}-2\right)=j-1\,.\]

Hence, in either case we have $\sum_{l=1}^{j-1}\erw\left[p_{l,j-l}(M)\right]=j-1$.\\

Furthermore 

\[\sum_{l=1}^{j-1}\erw\left[p_{2l-j}(M)\right]=2\sum_{l=1}^{j/2-1}\erw\left[p_{2l-j}(M)\right]+\erw\left[\on{Tr}(I_n)\right]
=2\left(\frac{j}{2}-1\right)+n=j-2+n\,.\]

Therefore, if $j$ is even

\begin{eqnarray*} 
&&\erw\left[(W_{t,j}-W_j)^4\right]=-2tj(n-1)(1+6j+3j^2)-2tj(1+3j)(j-1)\\
&+&2tj(1+3j)(j-2+n)-6tj^2(1+j)+6tj^2n(1+j)+\on{O}(t^2)\\
&=&tj\left(-12(n-1)-2+6+2+6(n-2)-6+6n\right)\\
&+&tj^3\left(-6(n-1)-6+6-6+6n\right)+\on{O}(t^2)\\
&=&\on{O}(t^2)\,,
\end{eqnarray*}

as asserted.
\end{proof}

Now we are in a position to check condition $(iii)\tra$ of Prop.\ \ref{Meckes-r}. By H\"{o}lder's inequality 

\begin{eqnarray*}
\erw\left[\|W_t-W\|_2^3\right]&=&\erw\left[\left(\sum_{j=d-r+1}^d(W_{t,j}-W_j)^2\right)^{3/2}\right]\\
&=&\erw\left[\left(\sum_{j,k,l=d-r+1}^d(W_{t,j}-W_j)^2 (W_{t,k}-W_k)^2 (W_{t,l}-W_l)^2\right)^{1/2}\right]\\
&\leq&\sum_{j,k,l=d-r+1}^d\erw\left[\left|(W_{t,j}-W_j) (W_{t,k}-W_k) (W_{t,l}-W_l)\right|\right]\\
&\leq&\sum_{j,k,l=d-r+1}^d\biggl(\erw\left[|W_{t,j}-W_j|^3\right]\erw\left[|W_{t,k}-W_k|^3\right]\erw\left[|W_{t,l}-W_l|^3\right]\biggr)^{1/3}\,.
\end{eqnarray*}

Thus we have 

\begin{eqnarray*}
\frac{1}{t}\erw\left[\|W_t-W\|_2^3\right]\leq\sum_{j,k,l=d-r+1}^d\biggl(\frac{1}{t}\erw\left[|W_{t,j}-W_j|^3\right]\frac{1}{t}\erw\left[|W_{t,k}-W_k|^3\right]\frac{1}{t}\erw\left[|W_{t,l}-W_l|^3\right]\biggr)^{1/3},
\end{eqnarray*}

and it suffices to show that for all $j = d-r+1, \ldots, d$ 
\[\lim_{t\to0}\frac{1}{t}\erw\left[|W_{t,j}-W_j|^3\right]=0\,.\]
But this follows from Lemma \ref{lemma2.6}, since by the Cauchy-Schwarz inequality 
\begin{eqnarray*}
\erw\left[|W_{t,j}-W_j|^3\right]&\leq&\sqrt{\erw\left[|W_{t,j}-W_j|^2\right]\erw\left[|W_{t,j}-W_j|^4\right]}
=\sqrt{\left(tj^2(n-1)+\on{O}(t^2)\right)\cdot \on{O}(t^2)}\\
&=&\sqrt{\on{O}(t^3)}=\on{O}(t^{3/2})
\end{eqnarray*}
and hence $\frac{1}{t}\erw\left[|W_{t,j}-W_j|^3\right]\stackrel{t\to0}{\rightarrow}0$.\\

By Prop.\ \ref{Meckes-r} we can now conclude that
\[d_\calw(W,Z_\Sigma)\leq \|\Lambda^{-1}\|_{\rm op}
\left(\erw[\|R\|_2]+\frac{1}{\sqrt{2\pi}}\|\Sigma^{-1/2}\|_{\rm op}\erw[\|S \|_{\rm HS}]\right)\,.\]

Clearly, $\|\Lambda^{-1}\|_{\rm op}=\frac{2}{(n-1)(d-r+1)}$ and $\|\Sigma^{-1/2}\|_{\rm op}=\frac{1}{\sqrt{d-r+1}}$. 
In order to bound $\erw[\|R\|_2]$ we will first prove the following lemma.

\begin{lemma}\label{lemma2.7}
If $n \ge 4d+1$, for all $j= d-r+1, \ldots, d$ we have that $\erw[R_j^2]=\on{O}(j^5)$.
\end{lemma}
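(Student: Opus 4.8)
The plan is to substitute the explicit expressions for $R_j$ furnished by Lemma \ref{lemma2.4} into $\erw[R_j^2]$ and to evaluate the resulting expectations of products of traces of powers by means of the orthogonal moment formula, Lemma \ref{DS-so}. What makes the statement nontrivial is that for even $j$ the a priori dominant summand $-\frac{(n-1)j}{2}$ of $R_j$ has to cancel against an $n$-contribution hidden inside $\frac{j}{2}\sum_{l=1}^{j-1}p_{2l-j}(M)$: otherwise one would merely get $\erw[R_j^2]=\on{O}(n^2 j^2)$, which is useless. Concretely, recalling that $p_{-m}(M)=p_m(M)$ for $M\in\on{SO}_n$ (as used in the proof of Lemma \ref{lemma2.5}), the summand with index $l=j/2$ in $\sum_{l=1}^{j-1}p_{2l-j}(M)$ equals $p_0(M)=\on{Tr}(I_n)=n$, so that for even $j$
$$R_j=\frac{j}{2}\Bigl(1-\sum_{l=1}^{j-1}p_{l,j-l}(M)+\sum_{\substack{l=1\\ l\neq j/2}}^{j-1}p_{2l-j}(M)\Bigr),$$
while for odd $j$ no exponent $2l-j$ vanishes and $R_j=\frac{j}{2}\bigl(-\sum_{l=1}^{j-1}p_{l,j-l}(M)+\sum_{l=1}^{j-1}p_{2l-j}(M)\bigr)$. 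In either case $R_j=\frac{j}{2}Q_j$, where $Q_j$ is a polynomial in the $\on{Tr}(M^p)$ of total degree at most $j$ with bounded coefficients and, crucially, \emph{no} dependence on $n$; it therefore suffices to show $\erw[Q_j^2]=\on{O}(j^3)$.

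Write $U_j:=\sum_{l=1}^{j-1}p_{l,j-l}(M)$, and let $\tilde V_j$ be the pertinent sum of at most $j$ single traces, which after reduction via $p_{-m}(M)=p_m(M)$ takes the form $\sum p_{p}(M)=\sum\on{Tr}(M^{p})$ with exponents $1\le p\le j-1$. Since $Q_j$ equals $\pm U_j+\tilde V_j$ up to an additive constant in $\{0,1\}$, the inequality $(a+b+c)^2\le 3(a^2+b^2+c^2)$ reduces the task to the two bounds $\erw[U_j^2]=\on{O}(j^3)$ and $\erw[\tilde V_j^2]=\on{O}(j^3)$. The second is immediate: $\erw[\tilde V_j^2]=\sum_{p,q}\erw[\on{Tr}(M^{p})\on{Tr}(M^{q})]$ is a sum of at most $j^2$ terms with exponents satisfying $p+q\le 2(j-1)<n$, and Lemma \ref{DS-so} gives $\erw[\on{Tr}(M^{p})\on{Tr}(M^{q})]=\on{O}(p)=\on{O}(j)$ when $p=q$ and $\on{O}(1)$ when $p\neq q$; hence $\erw[\tilde V_j^2]=\on{O}(j)\cdot\on{O}(j^2)=\on{O}(j^3)$.

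The substantial step is $\erw[U_j^2]=\sum_{l,m=1}^{j-1}\erw\bigl[p_{l,j-l}(M)\,p_{m,j-m}(M)\bigr]=\on{O}(j^3)$. Each summand is the expectation of a product of four traces $\on{Tr}(M^l)\on{Tr}(M^{j-l})\on{Tr}(M^m)\on{Tr}(M^{j-m})$ of total degree $2j\le 2d<n$, so Lemma \ref{DS-so} applies and yields $\erw[\,\cdot\,]=\prod_p f_a(p)$, where $a_p$ denotes the multiplicity of the exponent $p$ in the multiset $\{l,j-l,m,j-m\}$. Inspecting the cases in the definition of $f_a(p)$ one sees that $f_a(p)$ is $\on{O}(1)$, $\on{O}(p)$, $\on{O}(p^2)$ for $a_p=1$, $a_p\in\{2,3\}$, $a_p=4$ respectively; since $\sum_p a_p=4$, this forces $\bigl|\erw[p_{l,j-l}(M)p_{m,j-m}(M)]\bigr|\le Cj^2$ for all $(l,m)$, and the value is $\on{O}(1)$ unless the multiset $\{l,j-l,m,j-m\}$ contains a repeated exponent, which happens for only $\on{O}(j)$ of the $(j-1)^2$ pairs $(l,m)$. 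Consequently $\erw[U_j^2]\le \on{O}(j)\cdot Cj^2+(j-1)^2\cdot\on{O}(1)=\on{O}(j^3)$. (For the "diagonal" pairs $m\in\{l,j-l\}$ one may instead use $\erw[p_{l,j-l}(M)^2]=\erw[\on{Tr}(M^l)^2]\erw[\on{Tr}(M^{j-l})^2]=\on{O}(l(j-l))$, giving $\sum_l l(j-l)=\on{O}(j^3)$, but the crude count already suffices.) Combining, $\erw[R_j^2]=\frac{j^2}{4}\erw[Q_j^2]=\on{O}(j^5)$.

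I do not expect a genuine obstacle. The only points needing care are (a) spotting and performing the cancellation of the explicit $n$-term for even $j$, on which the order $\on{O}(j^5)$ entirely depends, and (b) the elementary but slightly tedious case distinction for $f_a(p)$ that shows the $\on{O}(j^2)$-sized expectations in $\erw[U_j^2]$ occur for only $\on{O}(j)$ of the $\on{O}(j^2)$ index pairs.
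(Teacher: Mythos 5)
Your proof is correct and follows the same basic strategy as the paper's --- identify the cancellation of the explicit $n$-dependence for even $j$, then bound the resulting $n$-free expression via the Diaconis--Shahshahani moment formulae (Lemma~\ref{DS-so}) --- but the organization differs in a useful way. The paper expands $\erw[R_j^2]=\frac{j^2}{4}(T_1-2T_2+T_3)$ for odd $j$ and works out explicit case-by-case formulae for each cross-moment, then only sketches the even case (noting one must further distinguish $j\equiv 0$ from $j\equiv 2$ modulo $4$). You instead apply $(a+b+c)^2\le 3(a^2+b^2+c^2)$ to reduce directly to the two diagonal sums $\erw[U_j^2]$ and $\erw[\tilde V_j^2]$, which sidesteps the cross term $T_2$ entirely and lets you treat both parities uniformly. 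Your bound on $\erw[U_j^2]$ is a counting argument --- a uniform $\on{O}(j^2)$ bound on each moment, obtained by observing how $f_a(p)$ grows with the multiplicity $a_p$, combined with the observation that moments of this size occur only for the $\on{O}(j)$ pairs $(l,m)$ exhibiting a coincidence in the multiset $\{l,j-l,m,j-m\}$ --- whereas the paper tabulates $\erw[p_{l,j-l}p_{k,j-k}]$ explicitly for each case. Both routes give $\erw[R_j^2]=\on{O}(j^5)$; yours is tidier and more robust to the parity distinctions, at the cost of slightly looser intermediate constants (e.g.\ $\erw[\tilde V_j^2]=\on{O}(j^3)$ where the paper gets $T_3=\on{O}(j^2)$), which is immaterial for the stated conclusion.
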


\begin{proof}
First suppose that $j$ is odd. Then 

\begin{eqnarray*}
&&\erw[R_j^2]=\erw\left[\left(-\frac{j}{2}\sum_{l=1}^{j-1}p_{l,j-l}(M)+\frac{j}{2}\sum_{l=1}^{j-1}p_{2l-j}(M)\right)^2\right]\\
&=&\frac{j^2}{4}\left(\sum_{l,k=1}^{j-1}\erw[p_{l,j-l}(M)p_{k,j-k}(M)]-2\sum_{l,k=1}^{j-1}\erw[p_{l,j-l}(M)p_{2k-j}(M)]
+\sum_{l,k=1}^{j-1}\erw[p_{2l-j}(M)p_{2k-j}(M)]\right)\\
&=:&\frac{j^2}{4}\left(T_1-2T_2+T_3\right)\,.
\end{eqnarray*} 

By Lemma \ref{DS-so} for $j$ odd

\begin{eqnarray*}
\erw[p_{l,j-l}(M)p_{k,j-k}(M)]&=&
\begin{cases}
l(j-l+1),& \text{if }k=l \text{ is odd,}\\
(l+1)(j-l),&\text{if }k=l \text{ is even,}\\
l(j-l+1),&\text{if }l \text{ is odd and } k=j-l,\\
0, &\text{if }l \text{ is odd and } k\notin\{l,j-l\},\\
(l+1)(j-l), &\text{if }l \text{ is even and } k=j-l,\\
0, &\text{if }l \text{ is even and }k\notin\{l,j-l\},
\end{cases}
\end{eqnarray*}
\begin{eqnarray*}
\erw[p_{l,j-l}(M)p_{2k-j}(M)]&=&
\begin{cases}
l, &\text{if }l=2k-j, \\
j-l, &\text{if }l=2j-2k,\\
l, &\text{if }l=j-2k,\\
j-l, &\text{if }l=2k,\\
0, &\text{otherwise,}
\end{cases}\\
\erw[p_{2l-j}(M)p_{2k-j}(M)]&=&
\begin{cases}
|2l-j|,& \text{if }l=k\text{ or }l=j-k,\\
0,& \text{otherwise.}
\end{cases}
\end{eqnarray*}

Therefore, 

\begin{eqnarray*}
T_1&=&2\sum_{\substack{l=1\\l\text{ odd}}}^{j-1}l(j-l+1)+2\sum_{\substack{l=1\\l\text{ even}}}^{j-1}(l+1)(j-l)\\
&\leq&2j\sum_{l=1}^j l= j^2(j-1)=\on{O}(j^3)\,,
\end{eqnarray*}

\begin{eqnarray*}
T_2&=&2\sum_{\substack{l=1\\l\text{ odd}}}^{j-1}l+2\sum_{\substack{l=1\\l\text{ even}}}^{j-1}(j-l)
\leq2\sum_{l=1}^jl=j(j-1)=\on{O}(j^2)\,
\end{eqnarray*}
and
\[T_3=2\sum_{l=1}^{j-1}|2l-j|\leq2\sum_{l=1}^j j\ = 2j^2=\on{O}(j^2)\,.\]

Hence for $j$ odd

\[\erw[R_j^2]=\frac{j^2}{4}\left(\on{O}(j^3)-2\on{O}(j^2)+\on{O}(j^2)\right)=\on{O}(j^5)\,.\]

The case that $j$ is even can be treated similarly as can be seen by observing that in this case 

\[R_j=\frac{j}{2}\left(1-\sum_{l=1}^{j-1}p_{l,j-l}(M)+\sum_{\substack{l=1\\l\not=j/2}}^{j-1}p_{2l-j}\right)\,.\]

But as in Case 2 of the proof of Lemma \ref{lemma2.6} (ii), one has to distinguish between the cases that 
$j\equiv0 \on{mod}4$ or else that $j\equiv2 \on{mod}4$, and the explicit formulae for $\erw[R_j^2]$ are more complicated.
\end{proof}

By Lemma \ref{lemma2.7} there is a constant $C>0$ neither depending on $j$ nor on $n$ such that $\erw[R_j^2]\leq Cj^5$.
Thus by Jensen's inequality and the binomial theorem we obtain

\begin{eqnarray} \label{j5}
&&\erw[\|R\|_2]\leq\sqrt{\sum_{j=d-r+1}^d \erw[R_j^2]}\leq\sqrt{\sum_{j=d-r+1}^d Cj^5}=\sqrt{C}\sqrt{\sum_{j=1}^r (j+d-r)^5}\nonumber\\
&=&\sqrt{C}\left(\sum_{j=1}^r\left(j^5+5j^4(d-r)+10j^3(d-r)^2+10j^2(d-r)^3+5j(d-r)^4+(d-r)^5\right)\right)^{1/2}\nonumber\\
&\leq&\sqrt{C}\left(r^6+5r^5(d-r)+10r^4(d-r)^2+10r^3(d-r)^3+5r^2(d-r)^4+r(d-r)^5\right)^{1/2}\nonumber\\
&\leq&\sqrt{C}\sqrt{32}\left(\max\{r^6, (d-r)^5r\}\right)^{1/2}=O\left(\max\{r^3,(d-r)^{5/2}\sqrt{r}\}\right)\,.
\end{eqnarray}

Next, we turn to bounding $\erw[\|S \|_{\rm HS}]$. By Lemma \ref{DS-so} we have

\begin{eqnarray*}
&&\erw[\|S \|_{\rm HS}^2]=\sum_{j,k=d-r+1}^d\erw[S_{j,k}^2]=\sum_{j=d-r+1}^d j^4\erw\left[1-2p_{2j}(M)+p_{2j}(M)^2\right]\\
&+&\sum_{\substack{j,k=d-r+1\\j\not=k}}^d j^2k^2\erw\left[p_{j-k}(M)^2-2p_{j-k}(M)p_{j+k}(M)+p_{j+k}(M)^2\right]\\
&=&2\sum_{j=d-r+1}^d j^5+2\sum_{d-r+1\leq k<j\leq d}j^2k^2\left(\erw\left[p_{j-k}(M)^2\right]-2\erw\left[p_{j-k}(M)p_{j+k}(M)\right]+\erw\left[p_{j+k}(M)^2\right]\right)\,.
\end{eqnarray*}

Again, by Lemma \ref{DS-so}

\begin{eqnarray*}
\erw\left[p_{j-k}(M)^2\right]&=&
\begin{cases}
1+j-k,&\text{if }j+k \text{ even,}\\
j-k,&\text{if }j+k\text{ odd,}
\end{cases}\\
\erw\left[p_{j-k}(M)p_{j+k}(M)\right]&=&
\begin{cases}
1,~~~~~~&\text{if } j+k\text{ even,}\\
0,&\text{if } j+k\text{ odd,}
\end{cases}\\
\erw\left[p_{j+k}(M)^2\right]&=&
\begin{cases}
1+j+k,&\text{if } j+k\text{ even}\\
j+k,&\text{if } j+k\text{ odd.}
\end{cases}
\end{eqnarray*}

Hence, 

\begin{eqnarray*}
\erw[\|S \|_{\rm HS}^2]
&=&2\sum_{j=d-r+1}^d j^5+2\sum_{\substack{d-r+1\leq k<j\leq d\\k+j\text{ even}}}j^2k^2\left(1+j-k-2+1+j+k\right)\\
&+&2\sum_{\substack{d-r+1\leq k<j\leq d\\k+j\text{ odd}}}j^2k^2\left(j-k-2\cdot0+j+k\right)=2\sum_{j=d-r+1}^d j^5+4\sum_{d-r+1\leq k<j\leq d}k^2j^3\,.
\end{eqnarray*}

From \eqref{j5} we know that 

\[\sum_{j=d-r+1}^d j^5=\sum_{j=1}^r(j+d-r)^5\leq 32\max\{r^6,\, (d-r)^5r\},\]

and furthermore we can bound

\begin{eqnarray*}
&&\sum_{d-r+1\leq k<j\leq d}k^2j^3\leq\sum_{j=d-r+2}^dj^2(j-(d-r+1))j^3\\
&=&\sum_{j=2}^r(j+d-r)^2(j+d-r-(d-r+1))(j+d-r)^3=\sum_{j=2}^r(j+d-r)^5(j-1)\\
&\leq&r\sum_{j=1}^r(j+d-r)^5\leq 32r\max\{r^6,\, (d-r)^5r\}=32\max\{r^7,\,(d-r)^5r^2\}\,.
\end{eqnarray*}

Thus, we obtain 

\begin{eqnarray*}
\erw[\|S \|_{\rm HS}^2]&\leq& 64\max\{r^6,\,(d-r)^5r\} + 128\max\{r^7,\,(d-r)^5r^2\}\\
&=&O\left(\max\{r^7,\,(d-r)^5r^2\}\right)\,,
\end{eqnarray*}

and hence, by Jensen's inequality

\[\erw[\|S \|_{\rm HS}]\leq\sqrt{\erw[\|S \|_{\rm HS}^2]}=O\left(\max\{r^{7/2},\,(d-r)^{5/2}r\}\right)\,,\]

Collecting terms, we see that 
\begin{eqnarray*}
d_\calw(W,Z_\Sigma)&\leq&\|\Lambda^{-1}\|_{\rm op}\erw[\|R\|_2]+\frac{1}{\sqrt{2\pi}}\|\Lambda^{-1}\|_{\rm op}\|\Sigma^{-1/2}\|_{\rm op}\erw[\|S \|_{\rm HS}]\\
&=&\frac{2}{(n-1)(d-r+1)}O\left(\max\{r^3,(d-r)^{5/2}\sqrt{r}\}\right)\\
&+&\frac{1}{\sqrt{2\pi}}\frac{2}{(n-1)(d-r+1)^{3/2}}O\left(\max\{r^{7/2},\,(d-r)^{5/2}r\}\right)\\
&=&O\left(\frac{\max\left\{\frac{r^3}{d-r+1},\, \frac{r^{7/2}}{(d-r+1)^{3/2}},\, \frac{(d-r)^{5/2}\sqrt{r}}{d-r+1},\, \frac{(d-r)^{5/2}r}{(d-r+1)^{3/2}}\right\}}{n}\right)\,.
\end{eqnarray*}
Proceeding as in the unitary case and treating the cases that $r \le d-r$ and $d-r < r$ separately, one obtains that the last
expression is of order
$$\frac{1}{n}\max\left\{\frac{r^{7/2}}{(d-r+1)^{3/2}},\,(d-r)^{3/2}\sqrt{r}\right\}.$$
This concludes the proof of Theorem \ref{thm-so}.

\section{The symplectic group}
\label{sec-sp}
Let $M = M_n$ be distributed according to Haar measure on $K_n = \on{USp}_{2n}$ and let $d \in \nn,\ r = 1, \ldots, d$. Consider the random vector
$W:=W(d,r,n):=(f_{d-r+1}(M),f_{d-r+2}(M),\ldots,f_d(M))^T$, where

\[f_j: \on{USp}_{2n}\rightarrow\rr\,,\quad f_j=
\begin{cases}
p_j,&  j\text{ odd}\\
p_j+1,& j\text{ even.}
\end{cases}\]

Let $Z=(Z_{d-r+1},\ldots,Z_d)^T$ denote an $r$-dimensional real standard normal random vector,
$\Sigma:=\on{diag}(d-r+1,d-r+2,\dots,d)$, and write 
$Z_\Sigma:=\Sigma^{1/2}Z$.

The objective of this section is to prove the following 

\begin{theorem} \label{thm-sp}
\item If $n \ge 2d$, the Wasserstein distance between $W$ and $Z_\Sigma$ is again
of the same order as in the unitary case, namely, as given in \eqref{ordnungsformel}.
\end{theorem}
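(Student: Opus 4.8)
The plan is to follow the template of the special orthogonal case of Section~\ref{sec-so} essentially verbatim, applying the real abstract theorem, Proposition~\ref{Meckes-r}, to the exchangeable pairs $(W,W_t)_{t>0}$ with $W_t=(f_{d-r+1}(M_t),\ldots,f_d(M_t))^T$ coming from Brownian motion on $\on{USp}_{2n}$. As there, $f_j(M_t)-f_j(M)=p_j(M_t)-p_j(M)$ regardless of the parity of $j$, so everything reduces to computations with the $p_j$, and the only structural changes from the orthogonal case are the sign in front of $\sum_l p_{2l-j}$ in Lemma~\ref{Rains-sp}, the Casimir eigenvalue $\tfrac{(2n+1)}{2}$ in place of $\tfrac{(n-1)}{2}$, and the signs $(-1)^{(j-1)a_j}$ in the Diaconis--Shahshahani formulae of Lemma~\ref{DS-sp}.

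For condition (i) I would combine Lemma~\ref{entwicklung} with Lemma~\ref{Rains-sp}(i): for $j$ even the term $-\tfrac{(2n+1)}{2}j\,p_j$ splits off an $f_j$-part and a constant $\tfrac{(2n+1)j}{2}=jn+\tfrac j2$, while the $l=j/2$ summand of $-\tfrac j2\sum_{l=1}^{j-1}p_{2l-j}$ equals $-jn$ because $p_0(M)=\on{Tr}(I_{2n})=2n$; the two copies of $jn$ cancel and one obtains, a.s.\ and in $\on{L}^1(\pp)$, that $\tfrac1t\erw[W_t-W|M]\to-\Lambda W+R$, where $\Lambda=\on{diag}\bigl(\tfrac{(2n+1)j}{2}:\ j=d-r+1,\ldots,d\bigr)$, $R_j=-\tfrac j2\sum_{l=1}^{j-1}p_{l,j-l}(M)-\tfrac j2\sum_{l=1}^{j-1}p_{2l-j}(M)$ for $j$ odd, and $R_j=\tfrac j2\bigl(1-\sum_{l=1}^{j-1}p_{l,j-l}(M)-\sum_{l=1,\,l\neq j/2}^{j-1}p_{2l-j}(M)\bigr)$ for $j$ even. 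For condition (ii) I would imitate Lemma~\ref{lemma2.5}: expanding $\erw[(p_j(M_t)-p_j(M))(p_k(M_t)-p_k(M))|M]$ by Lemmas~\ref{entwicklung} and~\ref{Rains-sp}(ii) and using $p_{-m}=p_m$ on $\on{USp}_{2n}$, the $(2n+1)$-terms, the $\sum p_{l,j-l}$-terms and the $\sum p_{2l-j}$-terms all cancel exactly as in the orthogonal computation, leaving $t\bigl(jk\,p_{j-k}(M)-jk\,p_{j+k}(M)\bigr)+\on{O}(t^2)$, literally the same expression as for $\on{SO}_n$. Since $\erw[p_{2j}(M)]=-1$ by Lemma~\ref{DS-sp} and $2\Lambda\Sigma=\on{diag}((2n+1)j^2)$, condition (ii) holds with $S$ given by $S_{jj}=-j^2\bigl(1+p_{2j}(M)\bigr)$ and $S_{jk}=jk\,p_{j-k}(M)-jk\,p_{j+k}(M)$ for $j\neq k$; note that $\erw[S_{jj}]=0$, as it must be.

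Condition $(iii)^{\prime}$ is reduced, via H\"older's and the Cauchy--Schwarz inequality exactly as in Section~\ref{sec-so}, to the two estimates $\erw[(W_{t,j}-W_j)^2]=tj^2(2n+1)+\on{O}(t^2)$ and $\erw[(W_{t,j}-W_j)^4]=\on{O}(t^2)$, the second being the symplectic analogue of Lemma~\ref{lemma2.6}(ii): one expands the fourth moment into nine conditional expectations, uses exchangeability to pair them up, and applies Lemma~\ref{DS-sp} (legitimate since $n\ge 2d$), the only new feature being that the parity distinction $j$ odd/$j$ even of the orthogonal argument is refined to $j\equiv 1,3\pmod 4$ versus $j\equiv 0,2\pmod 4$. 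Then Proposition~\ref{Meckes-r}(b) gives $d_\calw(W,Z_\Sigma)\le\|\Lambda^{-1}\|_{\rm op}\bigl(\erw\|R\|_2+\tfrac1{\sqrt{2\pi}}\|\Sigma^{-1/2}\|_{\rm op}\,\erw\|S\|_{\rm HS}\bigr)$ with $\|\Lambda^{-1}\|_{\rm op}=\tfrac{2}{(2n+1)(d-r+1)}$ and $\|\Sigma^{-1/2}\|_{\rm op}=(d-r+1)^{-1/2}$; the symplectic analogue of Lemma~\ref{lemma2.7}, again yielding $\erw[R_j^2]=\on{O}(j^5)$ by the same case analysis, together with a computation of $\erw\|S\|_{\rm HS}^2$ identical in form to the orthogonal one, gives $\erw\|R\|_2=\on{O}(\max\{r^3,(d-r)^{5/2}\sqrt r\})$ and $\erw\|S\|_{\rm HS}=\on{O}(\max\{r^{7/2},(d-r)^{5/2}r\})$. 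Since $\|\Lambda^{-1}\|_{\rm op}=\on{O}\bigl(1/(n(d-r+1))\bigr)$, just as in the orthogonal case, splitting into the subcases $r\le d-r$ and $d-r<r$ produces exactly the bound~\eqref{ordnungsformel}. The only genuine work is the book-keeping in the fourth-moment estimate and in $\erw[R_j^2]$, where the modified Laplacian signs and the factors $(-1)^{(j-1)a_j}$ of Lemma~\ref{DS-sp} must be tracked carefully; I expect that to be the main --- though entirely routine --- obstacle.
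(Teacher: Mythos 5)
Your proposal is correct and follows essentially the same route as the paper, which likewise proves the symplectic case by transplanting the orthogonal argument with the modified Laplacian formulae of Lemma \ref{Rains-sp} and the moment formulae of Lemma \ref{DS-sp} (your explicit simplification of $R_j$ for $j$ even, using $p_0(M)=2n$, agrees with the paper's Lemma \ref{lemma3.4} after cancellation, and your $\Lambda$, $S$, and final bounds coincide with the paper's). Nothing further is needed.
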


For the proof, we use again exchangeable pairs $(W, W_t)_{t>0}$ such that 
$$W_t =(f_{d-r+1}(M_t),f_{d-r+2}(M_t),\ldots,f_d(M_t))^T,$$
and apply Proposition \ref{Meckes-r}. The verification of condition (i) involves the following analog
of Lemma  \ref{lemma2.4} above.

\begin{lemma}\label{lemma3.4}
For all $j = d-r+1, \ldots, d$,
\[\erw[W_{t,j}-W_j|M]=\erw[f_j(M_t)-f_j(M)|M]=t\cdot\left(-\frac{(2n+1)j}{2}f_j(M)+R_j+\on{O}(t)\right)\,,\]
where 
\begin{eqnarray*}
R_j &=& -\frac{j}{2}\sum_{l=1}^{j-1}p_{l,j-l}(M)-\frac{j}{2}\sum_{l=1}^{j-1}p_{2l-j}(M)\text{ if } j \text{ is odd}, \\
R_j &=& \frac{(2n+1)j}{2}-\frac{j}{2}\sum_{l=1}^{j-1}p_{l,j-l}(M)-\frac{j}{2}\sum_{l=1}^{j-1}p_{2l-j}(M)\text{ if }j
\text{ is even.}
\end{eqnarray*}
\end{lemma}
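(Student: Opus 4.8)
The plan is to imitate verbatim the proof of Lemma \ref{lemma2.4}, replacing the $\on{SO}_n$-Laplacian formula of Lemma \ref{Rains-so}(i) by the $\on{USp}_{2n}$-Laplacian formula of Lemma \ref{Rains-sp}(i). First I would observe that, regardless of the parity of $j$, we have $f_j(M_t) - f_j(M) = p_j(M_t) - p_j(M)$, since $f_j$ differs from $p_j$ only by an additive constant. Hence it suffices to compute $\erw[p_j(M_t) - p_j(M)\mid M]$.

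Next, applying Lemma \ref{entwicklung} together with Lemma \ref{Rains-sp}(i), we get
\[
\erw[p_j(M_t) - p_j(M)\mid M] = t(\Delta_{\on{USp}_{2n}} p_j)(M) + \on{O}(t^2)
= t\Bigl(-\tfrac{(2n+1)j}{2}\,p_j(M) - \tfrac{j}{2}\sum_{l=1}^{j-1} p_{l,j-l}(M) - \tfrac{j}{2}\sum_{l=1}^{j-1} p_{2l-j}(M)\Bigr) + \on{O}(t^2).
\]
Now I would simply rewrite the leading coefficient of $p_j(M)$ in terms of $f_j(M)$. If $j$ is odd, then $f_j = p_j$, so the bracket is already of the form $-\tfrac{(2n+1)j}{2} f_j(M) + R_j$ with $R_j = -\tfrac{j}{2}\sum_{l=1}^{j-1} p_{l,j-l}(M) - \tfrac{j}{2}\sum_{l=1}^{j-1} p_{2l-j}(M)$. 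If $j$ is even, then $p_j(M) = f_j(M) - 1$, so $-\tfrac{(2n+1)j}{2} p_j(M) = -\tfrac{(2n+1)j}{2} f_j(M) + \tfrac{(2n+1)j}{2}$, and the extra constant $\tfrac{(2n+1)j}{2}$ gets absorbed into $R_j$, yielding the stated formula in the even case. Factoring out $t$ and writing the $\on{O}(t^2)$ term as $t\cdot\on{O}(t)$ completes the argument.

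There is essentially no obstacle here: the only thing to be careful about is the sign of the $\sum p_{2l-j}$ term, which is negative for $\on{USp}_{2n}$ (Lemma \ref{Rains-sp}(i)) as opposed to positive for $\on{SO}_n$ (Lemma \ref{Rains-so}(i)), and the sign of the absorbed constant in the even case, which is $+\tfrac{(2n+1)j}{2}$ here versus $-\tfrac{(n-1)j}{2}$ in Lemma \ref{lemma2.4}. Both discrepancies are already reflected in the statement of Lemma \ref{lemma3.4}, so the proof is a direct transcription. As in the orthogonal case, the mode of convergence (almost surely and in $\on{L}^1(\pp)$) when one later divides by $t$ and lets $t\to 0$ is governed by Remark \ref{convtype}.
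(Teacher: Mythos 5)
Your proposal is correct and is essentially identical to the paper's own proof: both reduce to $\erw[p_j(M_t)-p_j(M)\mid M]$ via the constant-shift observation, apply Lemma \ref{entwicklung} together with Lemma \ref{Rains-sp}(i), and then absorb the constant $+\tfrac{(2n+1)j}{2}$ into $R_j$ in the even case. The sign bookkeeping you highlight (the $-\sum p_{2l-j}$ term and the sign of the absorbed constant, both opposite to the $\on{SO}_n$ case) is exactly what the paper relies on as well.
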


\begin{proof} 
First observe that always $f_j(M_t)-f_j(M)=p_j(M_t)-p_j(M)$, no matter what the parity of $j$ is. By Lemmas \ref{entwicklung} and \ref{Rains-sp},

\begin{eqnarray*}
\erw[p_j(M_t)-p_j(M)|M]&=&t(\Delta p_j)(M)+\on{O}(t^2)\\
&=&t\left(-\frac{(2n+1)j}{2}p_j(M)-\frac{j}{2}\sum_{l=1}^{j-1}p_{l,j-l}(M)-\frac{j}{2}\sum_{l=1}^{j-1}p_{2l-j}(M)\right)+\on{O}(t^2)\,,
\end{eqnarray*}

which is 
\[t\Biggl(-\frac{(2n+1)j}{2}f_j(M)+\Bigl(-\frac{j}{2}\sum_{l=1}^{j-1}p_{l,j-l}(M)-\frac{j}{2}\sum_{l=1}^{j-1}p_{2l-j}(M)\Bigr)+\on{O}(t)\Biggr)\]

if $j$ is odd and which is

\[t\Biggl(-\frac{(2n+1)j}{2}f_j(M)+\Bigl(\frac{(2n+1)j}{2}-\frac{j}{2}\sum_{l=1}^{j-1}p_{l,j-l}(M)-\frac{j}{2}\sum_{l=1}^{j-1}p_{2l-j}(M)\Bigr)+\on{O}(t)\Biggr)\]

if $j$ is even. This proves the lemma.
\end{proof}

From Lemma \ref{lemma3.4} we conclude that 

\[\frac{1}{t}\erw[W_t-W|M]\stackrel{t\to0}{\longrightarrow}-\Lambda W+R \text{ a.s.\  and in }\on{L}^1(\pp)\,,\]
 
where $\Lambda=\on{diag}\left(\frac{(2n+1)j}{2}\,,\,j=d-r+1,\ldots,d\right)$ and $R=(R_{d-r+1},\ldots,R_d)^T$. Thus, condition (i) of Prop.\ \ref{Meckes-r} is satisfied. The validity of condition (ii) follows from the next lemma, whose proof only differs from that of Lemma \ref{lemma2.5} in that it makes use of Lemma \ref{Rains-sp} in the place of Lemma \ref{Rains-so}.

\begin{lemma} \label{lemma3.5}
For all $j,k = d-r+1, \ldots, d,$ 

\[\erw[(p_j(M_t)-p_j(M))(p_k(M_t)-p_k(M))|M]=t\left(jk p_{j-k}(M)-jk p_{j+k}(M)\right) + \on{O}(t^2)\,.\]

\end{lemma}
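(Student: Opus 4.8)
The plan is to follow the proof of Lemma~\ref{lemma2.5} line by line, substituting the symplectic Laplacian formulae of Lemma~\ref{Rains-sp} for the orthogonal ones of Lemma~\ref{Rains-so}. First, by the tower property of conditional expectation,
\[
\erw[(p_j(M_t)-p_j(M))(p_k(M_t)-p_k(M))|M]
= \erw[p_{j,k}(M_t)|M] - p_j(M)\,\erw[p_k(M_t)|M] - p_k(M)\,\erw[p_j(M_t)|M] + p_{j,k}(M).
\]
Then I would expand each of the three conditional expectations on the right by Lemma~\ref{entwicklung}, reading off the first-order-in-$t$ term from Lemma~\ref{Rains-sp}(ii) for $\erw[p_{j,k}(M_t)|M]$ and from Lemma~\ref{Rains-sp}(i) for $\erw[p_j(M_t)|M]$ and $\erw[p_k(M_t)|M]$. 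After multiplying out, the right-hand side is $p_{j,k}(M)-p_{j,k}(M)-p_{j,k}(M)+p_{j,k}(M)=0$ at order $t^0$, plus a term of order $t$ assembled from $p_{j,k}$, $p_{j+k}$, $p_{j-k}$ and the sums $\sum_{l=1}^{j-1}p_{l,j-l}$, $\sum_{l=1}^{k-1}p_{l,k-l}$, $\sum_{l=1}^{j-1}p_{j-2l}$, $\sum_{l=1}^{k-1}p_{k-2l}$, plus $\on{O}(t^2)$.

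Next I would collect the order-$t$ terms, the bookkeeping being identical to the orthogonal case: (a) the total $p_{j,k}$-coefficient is $-\tfrac{(2n+1)}{2}(j+k)+\tfrac{(2n+1)}{2}k+\tfrac{(2n+1)}{2}j=0$; (b) the two occurrences of $p_k\sum_{l}p_{l,j-l}$, one from $\Delta p_{j,k}$ and one from $-p_k\,\Delta p_j$, have opposite signs and cancel, and symmetrically for $p_j\sum_{l}p_{l,k-l}$; (c) using $p_{-m}(M)=p_m(M)$, so that $\sum_{l=1}^{j-1}p_{2l-j}=\sum_{l=1}^{j-1}p_{j-2l}$, the two occurrences of $p_k\sum_{l}p_{j-2l}$ also cancel, and symmetrically for $p_j\sum_{l}p_{k-2l}$. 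Here $p_{-m}(M)=\on{Tr}(M^{-m})=\overline{\on{Tr}(M^m)}=\on{Tr}(M^m)=p_m(M)$, which is valid because the eigenvalues of an element of $\on{USp}_{2n}$ occur in complex-conjugate pairs (so in particular $\on{Tr}(M^m)\in\rr$), and for $j=k$ one reads $p_{j-k}(M)$ as $p_0(M)=\on{Tr}(I_{2n})=2n$. What survives is exactly $t\bigl(jk\,p_{j-k}(M)-jk\,p_{j+k}(M)\bigr)$, contributed by the two terms $-jk\,p_{j+k}$ and $+jk\,p_{j-k}$ in Lemma~\ref{Rains-sp}(ii), and the lemma follows.

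There is no genuine obstacle in this computation. The only point requiring care is that the sign of the $\sum p_{2l-j}$-type terms in the symplectic Laplacian is opposite to that in the orthogonal Laplacian; but this is harmless, since in both Lemma~\ref{Rains-sp}(i) and Lemma~\ref{Rains-sp}(ii) the relevant signs flip simultaneously, so the cancellation in step (c) goes through verbatim. Equivalently, one may simply note that in the proof of Lemma~\ref{lemma2.5} the $\sum p_{2l-j}$-type contributions never reach the final expression, so replacing Lemma~\ref{Rains-so} by Lemma~\ref{Rains-sp} leaves the output unchanged.
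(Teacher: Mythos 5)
Your proof is correct and follows exactly the route the paper prescribes: redo the Lemma~\ref{lemma2.5} computation with the symplectic Laplacian of Lemma~\ref{Rains-sp} in place of the orthogonal one, observing that the $p_{j,k}$-terms and both families of sums cancel while the $-jk\,p_{j+k}$ and $+jk\,p_{j-k}$ terms survive. You also correctly note a detail the paper's one-line reference glosses over: the identity $p_{-m}(M)=p_m(M)$, justified in the proof of Lemma~\ref{lemma2.5} via $M^{-1}=M^T$, must be re-derived for $\on{USp}_{2n}$ (e.g.\ via the reality of $\on{Tr}(M^m)$ or via $M^{-1}=J^{-1}M^TJ$), and that the simultaneous sign flip of the $\sum p_{2l-j}$-type terms in parts (i) and (ii) of Lemma~\ref{Rains-sp} leaves the cancellation intact.
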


Observing that for all $j = d-r+1, \ldots, d$ we have $f_j(M_t)-f_j(M)=p_j(M_t)-p_j(M)$ and using Lemma \ref{lemma3.5} we can now easily compute

\begin{eqnarray*}
&&\frac{1}{t}\erw[(W_{t,j}-W_j)(W_{t,k}-W_k)|M]=\frac{1}{t}\erw[(f_j(M_t)-f_j(M))(f_k(M_t)-f_k(M))|M]\\
&=&\frac{1}{t}\erw[(p_j(M_t)-p_j(M))(p_k(M_t)-p_k(M))|M]=jk p_{j-k}(M)-jk p_{j+k}(M)+\on{O}(t^2)\\
&\stackrel{t\to0}{\rightarrow}&jk p_{j-k}(M)-jk p_{j+k}(M)\text{ a.s. and in } \on{L}^1(\pp)\,,
\end{eqnarray*}

for all $d-r+1\leq j,k\leq d$. Noting that for $j=k$ the last expression is $2j^2n-j^2 p_{2j}(M)$ and that 
$2\Lambda\Sigma=\on{diag}((2n+1)j^2\,,\,j=d-r+1,\ldots,d)$, we see that condition (ii) of Prop.\ \ref{Meckes-r} is satisfied with the matrix $S=(S_{j,k})_{j,k=d-r+1,\ldots, d}$ given by 

\[S_{j,k}=\begin{cases}
-j^2(1+p_{2j}(M)),& j=k\\
jk p_{j-k}(M)-jk  p_{j+k}(M),& j\not= k\,.
\end{cases}\]
The validity of condition $(iii\tra)$ of Prop.\ \ref{Meckes-r} is based on the following lemma, which can be 
proven in the same way as its analog in Section \ref{sec-so}.

\begin{lemma}\label{lemma3.6}
If $n \ge 2d$, for all $j = 1, \ldots, d$ there holds
\begin{enumerate}
\item[(i)] $\erw\left[(W_{t,j}-W_j)^2\right]=tj^2(2n+1)+\on{O}(t^2).$
\item[(ii)] $\erw\left[(W_{t,j}-W_j)^4\right]=\on{O}(t^2).$
\end{enumerate}
\end{lemma}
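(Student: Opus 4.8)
The plan is to run the argument of Lemma \ref{lemma2.6} essentially verbatim, replacing Lemma \ref{Rains-so} by Lemma \ref{Rains-sp} and Lemma \ref{DS-so} by Lemma \ref{DS-sp}; what changes are numerical values of moments, not the structure of the proof. As there, one uses throughout that $f_j(M_t) - f_j(M) = p_j(M_t) - p_j(M)$ whatever the parity of $j$, so that both assertions are statements about increments of power sums.

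For (i), I would specialize Lemma \ref{lemma3.5} to $k = j$ to get $\erw[(p_j(M_t) - p_j(M))^2 \mid M] = t(j^2 p_0(M) - j^2 p_{2j}(M)) + \on{O}(t^2)$, and then take expectations. Now $p_0(M) = \on{Tr}(I_{2n}) = 2n$, and Lemma \ref{DS-sp}, applied to the single index $2j$ (legitimate because $n \ge 2d$ forces $2n \ge 2j$), gives $\erw[p_{2j}(M)] = (-1)^{2j-1} = -1$. Hence $\erw[(W_{t,j} - W_j)^2] = t(2nj^2 + j^2) + \on{O}(t^2) = tj^2(2n+1) + \on{O}(t^2)$.

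For (ii), the first step is to use exchangeability exactly as in the proof of Lemma \ref{lemma2.6}(ii), reducing the fourth moment to $2\erw[p_j(M)^4] - 8\erw[p_j(M)^3 p_j(M_t)] + 6\erw[p_{j,j}(M) p_{j,j}(M_t)]$. I would then expand $\erw[p_j(M_t) \mid M]$ and $\erw[p_{j,j}(M_t) \mid M]$ via Lemma \ref{entwicklung}, inserting $\Delta_{\on{USp}_{2n}} p_j$ and $\Delta_{\on{USp}_{2n}} p_{j,j}$ from Lemma \ref{Rains-sp}. The $\on{O}(1)$ contributions cancel, since the coefficients $2,-8,6$ sum to $0$ and each of the three pieces carries the same leading term $\erw[p_j(M)^4]$; what is left, modulo $\on{O}(t^2)$, is $t$ times an explicit linear combination of $\erw[p_j(M)^4]$, $\sum_{l=1}^{j-1}\erw[p_j(M)^3 p_{l,j-l}(M)]$, $\sum_{l=1}^{j-1}\erw[p_j(M)^3 p_{2l-j}(M)]$, $\erw[p_j(M)^2 p_{2j}(M)]$ and $\erw[p_j(M)^2]$, with coefficients $-2(2n+1)j$, $-2j$, $-2j$, $-6j^2$, $12nj^2$ respectively. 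Each of these is evaluated from Lemma \ref{DS-sp} (valid for $n \ge 2d$): for $j$ odd the moments involving the odd power $p_j^3$ vanish and the remaining terms cancel at once; for $j$ even one is reduced to the sums $\sum_{l=1}^{j-1}\erw[p_{l,j-l}(M)]$ and $\sum_{l=1}^{j-1}\erw[p_{2l-j}(M)]$, which — handled exactly as in Section \ref{sec-so}, distinguishing $j \equiv 0$ from $j \equiv 2 \pmod 4$ and using $\on{Tr}(M^{-k}) = \on{Tr}(M^k)$ on $\on{USp}_{2n}$ — come out to $j-1$ and $2n - j + 2$ in both sub-cases. A short computation then shows that the coefficients of $j$, $j^2$ and $j^3$ in the resulting bracket all vanish, so that $\erw[(W_{t,j} - W_j)^4] = \on{O}(t^2)$.

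The only genuine difference from Section \ref{sec-so}, and the place where the bookkeeping needs care, is the systematic sign $(-1)^{(i-1)a_i}$ in Lemma \ref{DS-sp}, which is absent in Lemma \ref{DS-so}: it flips several intermediate values — for instance it turns the orthogonal identity $\erw[\on{Tr}(M^m)] = 1$ for even $m$ into $\erw[\on{Tr}(M^m)] = -1$ — and one must carry these signs through the even-$j$ computation. As in Section \ref{sec-so}, however, the $\on{O}(t)$ term in part (ii) still collapses to zero, so there is no conceptual obstacle; the main (and only) challenge is the length of the computation.
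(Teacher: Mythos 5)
Your proposal is correct and matches the paper's (unwritten) proof: the paper simply states that Lemma \ref{lemma3.6} ``can be proven in the same way as its analog in Section \ref{sec-so},'' and your argument carries out precisely that transcription, correctly tracking the replacements $n-1 \mapsto 2n+1$, $p_0(M)=n \mapsto 2n$, the sign flip in the $\sum p_{2l-j}$ term of $\Delta_{\on{USp}_{2n}}p_j$, and the extra sign $(-1)^{(j-1)a_j}$ in Lemma \ref{DS-sp}. The coefficients you record ($-2(2n+1)j,\,-2j,\,-2j,\,-6j^2,\,12nj^2$) and the evaluations $\sum_{l=1}^{j-1}\erw[p_{l,j-l}(M)]=j-1$ and $\sum_{l=1}^{j-1}\erw[p_{2l-j}(M)]=2n-j+2$ are all correct, and the resulting bracket does vanish identically in both parity cases.
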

So we obtain from Prop.\ \ref{Meckes-r} that

\[d_\calw(W,Z_\Sigma)\leq \|\Lambda^{-1}\|_{\rm op}
\left(\erw[\|R\|_2]+\frac{1}{\sqrt{2\pi}}\|\Sigma^{-1/2}\|_{\rm op}\erw[\|S \|_{\rm HS}]\right)\,.\]

Again, it is easy to see that $\|\Lambda^{-1}\|_{\rm op}=\frac{2}{(2n+1)(d-r+1)}$ and $\|\Sigma^{-1/2}\|_{\rm op}=\frac{1}{\sqrt{d-r+1}}$. Since we can show in a similar way as in Section \ref{sec-so} that $\erw[\|R\|_2]=O\left(\max\{r^3,(d-r)^{5/2}\sqrt{r}\}\right)$ and $\erw[\|S \|_{\rm HS}]=O\left(\max\{r^{7/2},\,(d-r)^{5/2}r\}\right)$, we may conclude the proof of Theorem \ref{thm-sp} as before.


\end{document}